\DeclareMathOperator*{\esssup}{\rm ess\,\sup}
\begin{document}
\newcommand{\beq}{\begin{eqnarray}}
\newcommand{\eeq}{\end{eqnarray}}
\newcommand{\beas}{\begin{eqnarray*}}
\newcommand{\enas}{\end{eqnarray*}}
\newcommand{\bea}{\begin{eqnarray}}
\newcommand{\ena}{\end{eqnarray}}
\newcommand{\bms}{\begin{multline*}}
\newcommand{\ems}{\end{multline*}}
\newcommand{\qmq}[1]{\quad \mbox{#1} \quad}
\newcommand{\qm}[1]{\quad \mbox{#1}}
\newcommand{\nn}{\nonumber}
\newcommand{\bbox}{\hfill $\Box$}
\newcommand{\ignore}[1]{}
\newcommand{\tr}{\mbox{tr}}
\newcommand{\Bvert}{\left\vert\vphantom{\frac{1}{1}}\right.}
\newtheorem{theorem}{Theorem}[section]
\newtheorem{corollary}{Corollary}[section]
\newtheorem{conjecture}{Conjecture}[section]
\newtheorem{proposition}{Proposition}[section]
\newtheorem{remark}{Remark}[section]
\newtheorem{lemma}[theorem]{Lemma}
\newtheorem{example}{Example}[section]
\newtheorem{definition}{Definition}[section]
\newtheorem{condition}{Condition}[section]
\newcommand{\pf}{\noindent {\bf Proof:} }
\newcommand{\sbull}{\scalebox{0.5}{\textbullet}}
\newcommand{\LargerCdot}{\raisebox{-0.25ex}{\scalebox{1.2}{$\cdot$}}}

\title{{\bf\Large Dickman approximation in simulation, summations and perpetuities}}

\author{Chinmoy Bhattacharjee, Larry Goldstein\thanks{This work was partially supported by NSA grant
		H98230-15-1-0250.}}
	
	\newcommand{\Addressc}{{
			\bigskip
			\footnotesize
			
 \textsc{IMSV, Universit{\"a}t Bern, Switzerland}\par\nopagebreak
 \textit{E-mail address}: \texttt{ \href{mailto:chinmoy.bhattacharjee@stat.unibe.ch}{chinmoy.bhattacharjee@stat.unibe.ch}}
}}

\newcommand{\Addressl}{{
		\bigskip
		\footnotesize
		
		\textsc{Department of Mathematics, University of Southern California, Los Angeles, USA}\par\nopagebreak
		\textit{E-mail address}: \texttt{ \href{mailto:larry@math.usc.edu}{larry@math.usc.edu}}
}}

\footnotetext{MSC 2010 subject classifications: Primary
	60F05,
	60E99,
91B16}  	
\footnotetext{Key words and
	phrases: weighted Bernoulli sums, delay equation, primes, utility, distributional approximation}

\maketitle

\begin{abstract}
	The generalized Dickman distribution ${\cal D}_\theta$ with parameter $\theta>0$ is the unique solution to the distributional equality 
	$W=_d W^*$, where
	\bea \label{eq:W*.trans}
	W^*=_d U^{1/\theta}(W+1),
	\ena
	with $W$ non-negative with probability one, $U \sim {\cal U}[0,1]$ independent of $W$, and $=_d$ denoting equality in distribution. Members of this family appear in number theory, stochastic geometry, perpetuities and the study of algorithms. We obtain bounds in Wasserstein type distances between ${\cal D}_\theta$ and the distribution of
	\bea \label{eq:rand.sum}
	W_n= \frac{1}{n} \sum_{i=1}^n  Y_k B_k 
	\ena
	where $B_1,\ldots,B_n, Y_1, \ldots, Y_n$ are independent with $B_k \sim {\rm Ber}(1/k), E[Y_k]=k, {\rm Var}(Y_k)=\sigma_k^2$ and provide an application to the minimal directed spanning tree in $\mathbb{R}^2$, and also obtain such bounds when the Bernoulli variables in \eqref{eq:rand.sum} are replaced by Poissons. We also give simple proofs and provide bounds with optimal rates for the Dickman convergence of the weighted sums, arising in probabilistic number theory, of the form
		\beas
		S_n=\frac{1}{\log(p_n)} \sum_{k=1}^n X_k \log(p_k) 
		\enas
		where $(p_k)_{k \ge 1}$ is an enumeration of the prime numbers in increasing order and $X_k$ is geometric with parameter $(1-1/p_k)$, Bernoulli with success probability $1/(1+p_k)$ or Poisson with mean $\lambda_k$.
	
	In addition, we broaden the class of generalized Dickman distributions by studying the fixed points of the transformation
	\beas
	s(W^*)=_d U^{1/\theta}s(W+1)
	\enas
	generalizing \eqref{eq:W*.trans}, that allows the use of non-identity utility functions $s(\cdot)$ in Vervaat perpetuities. We obtain distributional bounds for recursive methods that can be used to simulate from this family.
\end{abstract}

\section{Introduction} 
The Dickman distribution ${\cal D}$ first made its appearance in \cite{Di30} in the context of number theory for counting the number of integers below a fixed threshold whose prime factors lie below a given upper bound; see the more recent work \cite{Pi16} for a readable explanation of how the Dickman distribution arises there. Members from the broader class of generalized Dickman distributions ${\cal D}_\theta$ for $\theta>0$, of which ${\cal D}={\cal D}_1$, have since been used to approximate counts in 
logarithmic combinatorial structures, including permutations and partitions in \cite{ABT}, and more generally for the quasi-logarithmic class considered in \cite{BaNi11}, for the weighted sum of edges connecting vertices to the origin in minimal directed spanning trees in \cite{PW04}, and for certain weighted sums of independent random variables in \cite{Pi16b}. Simulation of the generalized Dickman distribution has been considered in \cite{Dev}, and in connection with the Quickselect sorting algorithm in \cite{Hwa} and \cite{Go17v1}.

Following \cite{Go17v1}, for a given $\theta>0$ and non-negative random variable $W$, define the $\theta$-Dickman bias distribution of $W$ by
\bea \label{def:W*}
W^*=_d U^{1/\theta}(W+1),  
\ena
where $U \sim {\cal U}[0,1]$ and is independent of $W$, and $=_d$ denotes equality in distribution. 
Though the density of ${\cal D}_\theta$ can presently be given only by specifying it somewhat indirectly as a certain solution to a differential delay equation, it is well known \cite{Dev} that the distributions ${\cal D}_\theta$ are characterized by satisfying $W^*=_d W$ uniquely, that is, ${\cal D}_\theta$ is the unique fixed point of the distributional transformation \eqref{def:W*}. Indeed, this property is the basis for simulating from this family using the recursion
\bea \label{eq:sim.dickman.theta}
W_{n+1}=U_n^{1/\theta}(W_n+1) \qmq{for $n \ge 0$, with $W_0=0$,}
\ena
where $U_m, m \ge 0$ are i.i.d. ${\cal U}[0,1]$ random variables and $U_n$ is independent of $W_n$, see \cite{Dev}.

Generally, distributional characterization and their associated transformations, such as \eqref{def:W*}, provide an additional avenue to study distributions and their approximation, and have been considered for the normal \cite{GoReG97}, the exponential \cite{PR11}, and various other distributions that may be less well known, such as one arising in the study of the degrees of vertices in certain preferential attachment graphs, see \cite{PRR13}.

In the following, $D_\theta$ will denote a ${\cal D}_\theta$ distributed random variable, where the subscript may be dropped when equal to 1. In \cite{Go17v2}, the upper bound 
\bea \label{eq:bound17}
d_{1}(W,D_\theta) \le (1+\theta)d_1(W,W^*)
\ena
for the Wasserstein distance between a non-negative random variable $W$ and $D_\theta$ 
was proved via Stein's method, where
\bea \label{def:d1}
d_1(X,Y) = \sup_{h \in {\rm Lip}_1}|Eh(X)-Eh(Y)|
\ena
with
\bea \label{def:Lipalpha}
{\rm Lip}_\alpha = \{h: |h(x)-h(y)| \le \alpha |x-y|\} \qm{for $\alpha \ge 0$.}
\ena
We also apply the fact that
alternatively one can write
\bea \label{def:was.inf}
d_1(X,Y)=\inf E|X-Y|,
\ena
where the infimum is over all joint distributions having the given $X,Y$ marginals. The infimum is achieved for variables taking values in any Polish space, see e.g. \cite{Ra91}, and so in particular for those that are real valued.
For notational simplicity we write $d_1(X,Y)$, say, for 
$d_1({\cal L}(X),{\cal L}(Y))$, where ${\cal L}(\cdot)$ stands for the distribution, or law, of a random variable.  In \cite{Go17v2},  inequality \eqref{eq:bound17} was used to derive a bound on the quality of the Dickman approximation for the running time of the Quickselect algorithm.

Here our aim is two fold. First, in Section \ref{sec:sums} we study the approximation of sums that converge in distribution to Dickman, for instance, those of the form
\bea \label{eq:weigh.sum.Y}
W_n = \frac{1}{n}\sum_{k=1}^n Y_kB_k, 
\ena
where $\{B_1,\ldots,B_n,Y_1,\ldots,Y_n\}$ are independent, $B_k$ is a Bernoulli random variable with success probability $1/k$, and $Y_k$ is non-negative with $EY_k=k$, and ${\rm Var}(Y_k)=\sigma_k^2$ for all $k=1,\ldots,n$. The most well known case is the one where $Y_k=k$ a.s., 
for which
\bea \label{def:weighted.sums.Bernoullis}
W_n = \frac{1}{n}\sum_{k=1}^n kB_k.
\ena
Sums of this type arise, for instance, in the analysis of the Quickselect algorithm for finding the $m^{th}$ smallest of a list of $n$ distinct numbers, see \cite{Hwa} (also \cite{Go17v2}), and for the sum of positions of records in a uniformly random permutation (see \cite{Re62}).
To state the result we will apply to such sums, we first define the Wasserstein-2 metric
\bea\label{def:d111}
d_{1,1}(X,Y) = \sup_{h \in {\cal H}_{1,1}}|Eh(Y)-Eh(X)| 
\ena
where, for $\alpha \ge 0,\beta \ge 0$,
\bea \label{eq:def.calHab}
{\cal H}_{\alpha,\beta} = \{h: h \in {\rm Lip}_\alpha, h' \in {\rm Lip}_\beta\},
\ena
with ${\rm Lip}_\alpha$ given in \eqref{def:Lipalpha}.
The work \cite{AMPS} obtains a bound of the form $C \sqrt{\log n}/n$ between $W_n$ in \eqref{def:weighted.sums.Bernoullis} and $D$ in a metric weaker than $d_{1,1}$ in \eqref{def:d111},  requiring test functions to be three times differentiable, and with the constant $C$ unspecified. The following theorem provides a more general result that in the specific case of \eqref{def:weighted.sums.Bernoullis} yields a bound in the stronger metric $d_{1,1}$ with a small, explicit constant.
\begin{theorem} \label{thm:weighted.Bernoullis}
	Let $W_n$ be as in \eqref{eq:weigh.sum.Y} and $D$ a standard Dickman random variable. Then with the metric $d_{1,1}$  in \eqref{def:d111},
	\beas
	d_{1,1}(W_n,D) \le \frac{3}{4n}+ \frac{1}{2n^2} \sum_{k=1}^n \frac{1}{k} \sqrt{(\sigma_k^2 + k^2) \sigma_k^2},
	\enas
	and in particular if $Y_k=k$ a.s., that is, for $W_n$ as in \eqref{def:weighted.sums.Bernoullis},
	\bea \label{eq:ref.for.rec}
	d_{1,1}(W_n,D) \le \frac{3}{4n}.
	\ena
\end{theorem}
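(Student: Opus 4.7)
I would extend the Stein-method approach behind \eqref{eq:bound17} from the $d_1$ metric to the smoother $d_{1,1}$. The natural Stein operator for $D$ is $(\mathcal{A}g)(x) = \int_0^1 g(u(x+1))\,du - g(x)$, which vanishes in expectation precisely on variables satisfying $W =_d U(W+1)$; for $h \in \mathcal{H}_{1,1}$ one solves the Stein equation $\mathcal{A}g_h = h - Eh(D)$ with explicit bounds on $\|g_h'\|_\infty$ and $\|g_h''\|_\infty$. For any coupling $(W,W^*)$ with $W^* =_d U(W+1)$, the identity $E[h(W)-h(D)] = E[g_h(W)-g_h(W^*)]$ and a second-order Taylor expansion of $g_h$ then give a bound of the form $d_{1,1}(W,D) \le c\,E[(W-W^*)^2]$ (possibly plus a first-order correction controlled by the Dickman-bias structure), which is the natural $d_{1,1}$ analogue of \eqref{eq:bound17}.

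The main task is then to construct a coupling $(W_n, W_n^*)$ for $W_n = \tfrac{1}{n}\sum Y_kB_k$ with $W_n^* =_d U(W_n+1)$ and $|W_n - W_n^*| = O(1/n)$; the trivial ``fresh uniform'' coupling $W_n^* = U(W_n+1)$ yields $E(W_n - W_n^*)^2$ of order one and is useless. I would exploit the sum structure by selecting an index $K$ uniformly from $\{1,\ldots,n\}$ and an auxiliary $V \sim \mathcal{U}[0,1]$ independently of the $(Y_k,B_k)$, and modifying only the $K$-th summand, replacing $Y_KB_K$ by a Dickman-biased analogue involving $V$ and an independent copy $Y_K'$ of $Y_K$. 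The heuristic is that each summand $Y_kB_k$ has mean $1$, so uniform averaging over $K$ effectively accounts for the ``$+1$'' in $W_n+1$, while $V$ supplies the uniform scaling. The marginal verification $W_n^* =_d U(W_n+1)$ is then a characteristic-function computation relying on the independence of the summands.

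With the coupling in hand, $W_n - W_n^*$ depends only on the $K$-th summand, so $E(W_n-W_n^*)^2 = \tfrac{1}{n^3}\sum_{k=1}^n E[(\text{replacement in term }k)^2]$. Expanding with $EB_k = 1/k$, $EY_k = k$, $\mathrm{Var}(Y_k) = \sigma_k^2$, $EV = 1/2$, $EV^2 = 1/3$, the algebra collapses in the deterministic case $Y_k = k$ to $\tfrac{1}{2n}$ exactly (delivering $\tfrac{3}{4n}$ after the Stein-side constant), and Cauchy--Schwarz on the terms involving $Y_k - k$ yields the $\sqrt{(\sigma_k^2+k^2)\sigma_k^2}/k$ factor of the general bound. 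The \emph{main obstacle} is the coupling construction: reconciling the globally uniform-scaled Dickman bias $U(W_n+1)$ with a single-term modification so that the marginal identity holds exactly is the technical heart of the proof; once that is in place, the Taylor-side and moment-side calculations are essentially routine.
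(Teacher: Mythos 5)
There is a genuine gap at the step you yourself flag as the technical heart: no single-index modification of the kind you describe can reproduce the Dickman-bias law exactly. The transform $W_n^*=_d U(W_n+1)$ rescales the \emph{whole} sum, and if you pick $K$ uniformly and replace only the $K$-th summand by a non-negative quantity independent of the untouched terms, the marginal identity already fails for $n=2$ with $Y_k=k$: there $W_2=\tfrac12+B_2$, so $U(W_2+1)$ puts mass $\tfrac12\cdot\tfrac{8}{15}=\tfrac{4}{15}$ on $[0,\tfrac12)$, whereas your construction can reach $[0,\tfrac12)$ only on the event $\{K=1,B_2=0\}$, whose probability is $\tfrac14<\tfrac{4}{15}$. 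This is exactly the obstruction the paper points to when it says that direct couplings of $W_n$ to its $\theta$-Dickman-biased version ``appear elusive for all our examples in Section \ref{sec:sums}''; the single-index surgery you have in mind is the standard construction for the \emph{size-bias} transform (whose Dickman characterization is $W^s=_d W+U$, exploited later in Theorem \ref{thm:numgen}), not for the multiplicative bias $U(W+1)$. A secondary soft spot is the claimed inequality $d_{1,1}(W,D)\le c\,E[(W-W^*)^2]$ ``plus a first-order correction'': the first-order term $E[g_h'(W)(W^*-W)]$ is not small in general and you give no mechanism for controlling it.

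The paper's proof avoids any such coupling. It uses the differential-delay Stein equation \eqref{eq:stein.eq.gen.dickman} with $\theta=1$, whose solution $f$ satisfies $\|f'\|_\infty\le 1$, $\|f''\|_\infty\le 1/2$ by Theorem \ref{fsolutionbds}, writes $f(W_n+1)-f(W_n)=\int_0^1 f'(W_n+u)\,du$, and computes $E[W_nf'(W_n)]$ by a leave-one-out argument using $B_k\sim{\rm Ber}(1/k)$, so that the error splits into four terms: a Cauchy--Schwarz term producing $\frac{1}{2n^2}\sum_k \frac1k\sqrt{(\sigma_k^2+k^2)\sigma_k^2}$, a term that vanishes by independence and $E[Y_k]=k$, a term of size $\frac{1}{2n}$ from replacing $W_n^{(k)}$ by $W_n$, and a Riemann-sum-versus-integral term of size $\frac{1}{4n}$. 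If you want to salvage a coupling-style argument, you would need to work with the size-bias structure (as in Theorem \ref{thm:numgen}) rather than the multiplicative Dickman bias.
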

From the first bound given by the theorem, speaking asymptotically we see that $W_n$ in \eqref{eq:weigh.sum.Y} converges to $D$ in distribution whenever $\sum_{k=1}^n \frac{1}{k} \sqrt{(\sigma_k^2 + k^2) \sigma_k^2}=o(n^2)$. In particular, weak convergence to the Dickman distribution occurs if $\sigma_k^2=O(k^{2-\epsilon})$ for some $\epsilon>0$. In Section \ref{sec:sums} we provide an application of Theorem \ref{thm:weighted.Bernoullis} to minimal directed spanning trees in $\mathbb{R}^2$.

We also show the following related result for a weighted sum of independent Poisson variables. For $\lambda > 0$, let ${\cal P}(\lambda)$ denote a Poisson random variable with mean $\lambda$.
\begin{theorem} \label{thm:Poisson}
	For $\theta > 0$, let $\{P_1,\ldots,P_n,Y_1,\ldots,Y_n\}$ be independent with $P_k \sim {\cal P}(\theta/k)$ and $Y_k$ non-negative with $EY_k=k$ and ${\rm Var}(Y_k)=\sigma_k^2$, for all $k=1,\ldots,n$. Then 
	\bea \label{eq:Wn.YkPk}
	W_n=\frac{1}{n} \sum_{k=1}^n Y_k P_k
	\ena
	satisfies
	\bea\label{PoisY}
	d_{1,1}(W_n,D_\theta) \le \frac{\theta}{4n} +\frac{\theta}{n} \sum_{k=1}^n \frac{\sigma_k}{k}+ \frac{\theta}{2n^2} \sum_{k=1}^n \frac{1}{k} \sqrt{(\sigma_k^2 + k^2) \sigma_k^2},
	\ena
	and in particular, in the case $Y_k=k$ a.s.,
	\bea\label{Pois}
	W_n=\frac{1}{n} \sum_{k=1}^n k P_k \qmq{satisfies}d_{1,1}(W_n,D_\theta) \le \frac{\theta}{4n}.
	\ena
\end{theorem}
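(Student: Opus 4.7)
The plan is to mirror the Stein's method proof that will be given for Theorem \ref{thm:weighted.Bernoullis}, with the Bernoulli Stein identity replaced by the Poisson analog $E[P_k g(P_k)] = (\theta/k) E[g(P_k+1)]$. The starting point is the Stein equation for ${\cal D}_\theta$, namely $E[{\cal A}_\theta f(W)] = 0$ for $W \sim {\cal D}_\theta$, where
\[
{\cal A}_\theta f(w) = \theta[f(w+1) - f(w)] - w f'(w),
\]
which is derivable from the $W^*$ characterization via integration by parts in the integral representation $Ef(W) = \theta E[(W+1)^{-\theta}\int_0^{W+1} u^{\theta-1} f(u)\, du]$. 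For $h \in {\cal H}_{1,1}$ we solve ${\cal A}_\theta f_h = h - Eh(D_\theta)$ and invoke the Stein bounds $\|f_h'\|_\infty \le 1/2$ and $\|f_h''\|_\infty \le 1/2$, which together account for the explicit constants in \eqref{PoisY} and \eqref{Pois}.

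Applying the Poisson identity to $E[W_n f_h'(W_n)]$, after conditioning on $W_n^{(k)} := \frac{1}{n}\sum_{j\neq k} Y_j P_j$ (which is independent of $(Y_k,P_k)$) and using that $Y_k$ is independent of $P_k$, we obtain
\[
E[W_n f_h'(W_n)] = \frac{\theta}{n} \sum_{k=1}^n E[\widetilde Y_k \, f_h'(W_n + Y_k/n)], \qquad \widetilde Y_k := Y_k/k.
\]
Writing $f_h(W_n+1)-f_h(W_n) = \int_0^1 f_h'(W_n+u)\,du$, we decompose $E[{\cal A}_\theta f_h(W_n)]$ into three pieces: (A) a Riemann-sum error comparing $\int_0^1 f_h'(W_n+u)\,du$ with $(1/n)\sum_k f_h'(W_n+k/n)$; (B) a shift piece $(\theta/n)\sum_k E[\widetilde Y_k(f_h'(W_n + k/n) - f_h'(W_n + Y_k/n))]$; and (C) a weight piece $(\theta/n)\sum_k E[(1-\widetilde Y_k) f_h'(W_n + k/n)]$. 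Piece (A) is bounded by $\theta\|f_h''\|_\infty/(2n) \le \theta/(4n)$. Piece (B) is controlled via the Lipschitzness of $f_h'$ and the Cauchy--Schwarz bound $E[\widetilde Y_k |Y_k - k|] \le (\sigma_k/k)\sqrt{\sigma_k^2 + k^2}$, producing the $(\theta/(2n^2))\sum_k k^{-1}\sqrt{(\sigma_k^2 + k^2)\sigma_k^2}$ contribution.

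The main obstacle is piece (C), where $\widetilde Y_k$ is not independent of $W_n + k/n = W_n^{(k)} + Y_k P_k/n + k/n$. The plan here is a centering trick: set $\bar W_k := W_n^{(k)} + k/n$ and write $f_h'(W_n + k/n) = f_h'(\bar W_k) + [f_h'(W_n + k/n) - f_h'(\bar W_k)]$. Since $\bar W_k$ is independent of $(Y_k,P_k)$ and $E\widetilde Y_k = 1$, the expectation $E[(1-\widetilde Y_k)f_h'(\bar W_k)]$ vanishes, and the residual is bounded in sup norm by $2\|f_h'\|_\infty\,|1 - \widetilde Y_k|$, whose expectation is at most $\sigma_k/k$, delivering the middle $(\theta/n)\sum_k \sigma_k/k$ term of \eqref{PoisY}. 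Using the sup bound rather than the Lipschitz bound here is essential, because the offset $W_n + k/n - \bar W_k = Y_k P_k/n$ is unbounded in $P_k$; the sup estimate is what yields the clean $\sigma_k/k$ factor without a $P_k$-dependent blow-up. In the special case $Y_k = k$ a.s., both (B) and (C) vanish identically, leaving only (A) and yielding \eqref{Pois}.
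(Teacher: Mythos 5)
Your overall route is the same as the paper's: the differential--delay Stein equation \eqref{eq:stein.eq.gen.dickman} (yours is the same operator up to the factor $-\theta$), the Poisson identity applied conditionally on everything but $P_k$ to turn $E[W_nf'(W_n)]$ into $\frac{\theta}{n}\sum_k E[(Y_k/k)f'(W_n+Y_k/n)]$, and then the same three-way decomposition into a Riemann-sum error, a shift term handled by Lipschitzness of $f'$ and Cauchy--Schwarz, and a weight term in $(1-Y_k/k)$. Pieces (A) and (B) are correct and match the paper, since your claimed second-derivative bound ($\|f_h''\|_\infty\le 1/2$ in your normalization) is exactly the rescaled version of the bound $\|f''\|_{(0,\infty)}\le \theta/2$ of Theorem \ref{fsolutionbds}.

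The gap is in piece (C). Your bookkeeping there rests on the first-derivative bound $\|f_h'\|_\infty\le 1/2$, but Theorem \ref{fsolutionbds} gives $\|f'\|_{(0,\infty)}\le\theta$ for the paper's normalization, which translates to $\|f_h'\|_\infty\le 1$ in yours; the factor-$1/2$ claim is not established anywhere and you give no argument for it. With the available bound, your centering argument yields a residual of size $2\|f_h'\|_\infty E|1-Y_k/k|\le 2\sigma_k/k$, so piece (C) becomes $\frac{2\theta}{n}\sum_k\sigma_k/k$, twice the middle term of \eqref{PoisY}. Moreover the centering at $\bar W_k=W_n^{(k)}+k/n$ buys you nothing here: since the offset $Y_kP_k/n$ forces you to fall back on the sup-norm estimate for the difference of two values of $f_h'$, you pay $2\|f_h'\|_\infty$, which is strictly worse than simply bounding the original term by $|E[(1-Y_k/k)f_h'(W_n+k/n)]|\le \|f_h'\|_\infty E|1-Y_k/k|\le \sigma_k/k$. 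That direct estimate is what the paper does, and it recovers the stated constant $\frac{\theta}{n}\sum_k\sigma_k/k$ using only the supported bound $\|f_h'\|_\infty\le 1$. So the repair is immediate (drop the centering in (C) and use the sup-norm bound directly), but as written your proof of \eqref{PoisY} relies on a smoothness constant for the Stein solution that is not available; the special case \eqref{Pois} is unaffected since (B) and (C) vanish there.
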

Similar to the weighted sum of Bernoullis in \eqref{eq:weigh.sum.Y}, we have weak convergence to the Dickman distribution if $\sigma_k^2=O(k^{2-\epsilon})$ for some $\epsilon>0$.

Next, we study Dickman approximation of weighted geometric and Bernoulli sums that appear in probabilistic number theory. For geometric variables, we write $X \sim {\rm Geom}(p)$ if $P(X=m)=(1-p)^m p$ for $m \ge 0$. Let $(p_k)_{k \ge 1}$ be an enumeration of the prime numbers in increasing order and $\Omega_{n}$ denote the set of all positive integers having no prime factor larger than $p_n$. Let $X_1,\ldots,X_n$ be independent with $X_k \sim {\rm Geom}(1-1/p_k)$ for $1 \le k \le n$, and let $\Pi_n$ be the distribution of $M_n$ given by
	\bea \label{eq:sum.pnt}
	M_{n}=\prod_{k=1}^n p_k^{X_k} \qmq{and}
	S_n=\frac{\log M_{n}}{\log (p_n)}= \frac{1}{\log(p_n)} \sum_{k=1}^n X_k \log(p_k).
	\ena
	One can specify (see e.g. \cite{Pi16}) $\Pi_n$ by
	\beas
	\Pi_n(m)=\frac{1}{\pi_nm}  \qmq{for $m \in \Omega_n$}
	\enas 
	with normalizing constant necessarily satisfying $\pi_n=\sum_{m \in \Omega_n} 1/m$. Distributional convergence of $S_n$ to the standard Dickman distribution was proved in \cite{Pi16}. In Theorem \ref{thm:num1} below, we provide $(\log n)^{-1}$ convergence rate in the Wasserstein-2 norm.
	\begin{theorem}\label{thm:num1}
		For $D$ a standard Dickman random variable and $S_n$ as in \eqref{eq:sum.pnt} with $X_1,\ldots,X_n$ independent variables with $X_k \sim {\rm Geom}(1-1/p_k)$, we have
		\beas
		d_{1,1}(S_n,D) \le \frac{C}{\log n}
		\enas
		for some universal constant $C$. Moreover, the order is not improvable.
	\end{theorem}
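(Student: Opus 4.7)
My plan is to prove the upper bound by Stein's method combined with an explicit coupling of $S_n$ and its Dickman bias $S_n^*$, and to establish optimality by a direct moment computation invoking Mertens' theorem. The overall scheme parallels that used for Theorem~\ref{thm:weighted.Bernoullis}: one first invokes a $d_{1,1}$-Stein bound of the form
\[
d_{1,1}(W,D) \le C_0 \cdot E|W - W^*|,
\]
valid under any coupling of a non-negative $W$ with its Dickman bias $W^*$. This parallels \eqref{eq:bound17} but in the stronger $d_{1,1}$ metric and follows from the Dickman Stein equation tested against $h \in \mathcal{H}_{1,1}$. The problem then reduces to constructing a good coupling of $(S_n, S_n^*)$ and estimating $E|S_n - S_n^*|$.

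To build the coupling, I would exploit the memoryless property of the geometric distribution: if $X_k \sim \mathrm{Geom}(1-1/p_k)$, then $X_k =_d I_k(\tilde X_k + 1)$, with $I_k \sim \mathrm{Ber}(1/p_k)$ and $\tilde X_k \sim \mathrm{Geom}(1-1/p_k)$ independent of $I_k$. Let $V \sim \mathcal{U}[0,\log p_n]$ be independent of all the $X_k$, set $U = V/\log p_n$, and let $K$ be the unique index with $V \in (\log p_{K-1},\log p_K]$ (with the convention $\log p_0 = 0$). Using the above representation, one builds $S_n^*$ from the $X_k$ by modifying the $K$-th summand to its shifted form and rescaling by $U$, so that the resulting law coincides with $U(S_n + 1)$, the Dickman bias of $S_n$. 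The coupling yields a pointwise bound $|S_n - S_n^*| \le \log p_K/\log p_n$ up to lower-order terms coming from the shift of $X_K$. Taking expectations, the required estimate reduces to bounding prime sums; invoking Mertens' estimate $\sum_{p \le p_n} \log p/p = \log p_n + O(1)$ together with the prime number theorem $p_n \sim n \log n$ yields $E|S_n - S_n^*| = O(1/\log n)$.

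For the optimality, apply the bound $d_{1,1}(S_n,D) \ge |E S_n - E D|$ with the test function $h(x) = x \in \mathcal{H}_{1,1}$. Since $E X_k = 1/(p_k - 1)$, one has
\[
E S_n = \frac{1}{\log p_n}\sum_{k=1}^n \frac{\log p_k}{p_k - 1},
\]
and writing $\log p_k/(p_k-1) = \log p_k/p_k + O(\log p_k/p_k^2)$ with convergent remainder sum, Mertens' second theorem $\sum_{p \le x}\log p/p = \log x + M + o(1)$ gives $E S_n - 1 \sim C/\log p_n$ for a non-zero constant $C$, so that $d_{1,1}(S_n,D) \ge c/\log n$ for large $n$, matching the upper bound.

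The main obstacle is the second step: verifying that the proposed construction of $S_n^*$ actually realises the Dickman-bias law $U(S_n + 1)$, and sharply bounding $E|S_n - S_n^*|$. The arithmetic structure of the weights $\log p_k/\log p_n$ together with the non-Bernoulli geometric distribution of the $X_k$ makes the bookkeeping delicate; the small primes, which correspond to small weights, are precisely what pin the convergence rate at $1/\log n$ rather than the naively expected $1/(n \log n)$ rate suggested by comparison with Theorem~\ref{thm:weighted.Bernoullis}.
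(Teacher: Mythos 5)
Your reduction to a coupling bound is not in itself problematic: since $\mathcal{H}_{1,1}\subset{\rm Lip}_1$, a bound of the form $d_{1,1}(S_n,D)\le C_0\,E|S_n-S_n^*|$ does follow from \eqref{eq:bound17}. The genuine gap is in the coupling itself, in two respects. First, your construction does not realize the Dickman-bias law. Conditional on $U=u$ the index $K=K(u)$ is deterministic, and by memorylessness the modified sum has the law of $S_n+\log p_{K(u)}/\log p_n$; rescaling then produces a variable with the law of $U\bigl(S_n+\log p_K/\log p_n\bigr)$ with $K$ a function of $U$, which is not $U(S_n+1)$. Second, even granting the construction, the quantitative claim fails: by definition of $K$ one has $\log p_K/\log p_n\ge U$, so $E[\log p_K/\log p_n]\ge 1/2$, and Mertens' theorem cannot make this quantity $O(1/\log n)$ — the random shift attached to a size-biased summand here is not small, it is approximately \emph{uniform}. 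Relatedly, the pointwise bound $|S_n-S_n^*|\le\log p_K/\log p_n$ cannot hold for any variable of the form $S_n^*=U\cdot(\text{something of order }S_n+1)$ with $U$ essentially independent of $S_n$: on the event $\{U<1/4,\ S_n>1/2\}$, which has probability bounded away from zero, $|S_n-S_n^*|$ is of constant order. This is exactly why the paper states that direct couplings to the Dickman bias appear elusive for these sums and proceeds differently: it proves the approximate size-bias identity $E[S_n\phi(S_n)]=\mu_n E[\phi(S_n+T_n)]+R_{n,\phi}$ with $T_n=\log p_I/\log p_n$ (Lemma \ref{lem:geom}), feeds it into the differential-delay Stein equation \eqref{eq:stein.eq.gen.dickman} with the solution bounds of Theorem \ref{fsolutionbds} (this is Theorem \ref{thm:numgen}), and then compares $T_n$ to an independent $U\sim\mathcal{U}[0,1]$ via a quantile coupling, obtaining $E|T_n-U|=O(1/\log n)$, $|\mu_n-1|=O(1/\log n)$ and $\sup_\phi|R_{n,\phi}|=O(1/\log^2 n)$. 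Your plan conflates the size-bias picture (shift comparable to $U$, to be compared with $U$) with the Dickman-bias picture (multiplicative rescaling, requiring a genuinely different and unavailable coupling), and as written the upper bound does not go through.

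Your optimality argument is essentially the paper's: test with $h(x)=x\in\mathcal{H}_{1,1}$ so that $d_{1,1}(S_n,D)\ge|\mu_n-1|$, and use the refined Mertens estimate. One point needs completing: you must check that the limiting constant is nonzero, i.e.\ that $\lim_n(\log p_n)(\mu_n-1)\ne 0$; with the strong form \eqref{Mertens} this limit equals $-\gamma$, which settles it. Asserting a nonzero constant $C$ without this identification leaves the lower bound incomplete, though this is a minor repair compared with the coupling issue above.
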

One may instead consider the distribution $\Pi'_n$ over $\Omega'_n$, the set of square-free integers with largest prime factor less than or equal to $p_n$, with $\Pi'_n(m)$ proportional to $1/m$ for all $m \in \Omega'_n$. Then $M_{n}= \prod_{k=1}^n p_k^{X_k}$ has distribution $\Pi'_n$ when 
$X_k \sim {\rm Ber}(1/(1+p_k))$ and are independent (see e.g. \cite{CS13}). That $S_n=\log M_{n}/\log (p_n)$ converges in distribution to the standard Dickman was proved in \cite{CS13} and very recently a $(\log \log n)^{3/2} (\log n)^{-1}$ rate was provided in \cite{AMPS} in a metric defined as a supremum over a class of three times differentiable functions. We provide the improved $(\log n)^{-1}$ convergence rate in the stronger Wasserstein-2 norm.
	\begin{theorem}\label{thm:num}
	For $D$ a standard Dickman random variable and $S_n$ as in \eqref{eq:sum.pnt} with $X_1,\ldots,X_n$ independent variables with $X_k \sim {\rm Ber}(1/(1+p_k))$, we have
	\beas
	d_{1,1}(S_n,D) \le \frac{C}{\log n}
	\enas
	for some universal constant $C$. Moreover, the order is not improvable.
\end{theorem}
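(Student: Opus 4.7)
The plan is to reduce Theorem \ref{thm:num} to the already-established Theorem \ref{thm:num1} by coupling each $X_k \sim {\rm Ber}(1/(1+p_k))$ to a companion $Y_k \sim {\rm Geom}(1-1/p_k)$ on a common probability space, and then invoking the triangle inequality for $d_{1,1}$. The point is that these two distributions agree up to a mass of order $p_k^{-2}$, so the weighted discrepancy between the resulting partial sums is negligible compared to the $1/\log n$ rate.

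For each $k$, I would take $Y_k \sim {\rm Geom}(1-1/p_k)$ and, independently, $V_k \sim {\rm Ber}(p_k/(1+p_k))$, and set $X_k = \mathbb{1}\{Y_k \ge 1\}V_k$. A direct check shows $X_k \sim {\rm Ber}(1/(1+p_k))$, and splitting on $\{Y_k = 0\}$, $\{Y_k \ge 1, V_k = 1\}$ and $\{Y_k \ge 1, V_k = 0\}$, using the memoryless property of the geometric to compute $E[Y_k \mid Y_k \ge 1] = p_k/(p_k-1)$, yields $E|X_k - Y_k| = 2/(p_k^2 - 1)$.

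Let $S_n^{\rm ber}$ and $S_n^{\rm geom}$ denote the sums built from these coupled coordinates. Then
\[ E|S_n^{\rm ber} - S_n^{\rm geom}| \le \frac{1}{\log p_n}\sum_{k=1}^n \log p_k \cdot \frac{2}{p_k^2 - 1} = O\!\left(\frac{1}{\log n}\right), \]
since $\sum_k \log p_k/p_k^2$ converges (for example by the Chebyshev-type bound $p_k \ge c\, k \log k$). Because $\mathcal{H}_{1,1} \subset {\rm Lip}_1$, we have $d_{1,1}(S_n^{\rm ber}, S_n^{\rm geom}) \le E|S_n^{\rm ber} - S_n^{\rm geom}|$, so combining with Theorem \ref{thm:num1} via the triangle inequality yields $d_{1,1}(S_n^{\rm ber}, D) = O(1/\log n)$.

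For the matching lower bound I would test against $h(x) = x$, which lies in $\mathcal{H}_{1,1}$ since its derivative is constant and hence $0$-Lipschitz; this gives $d_{1,1}(S_n^{\rm ber}, D) \ge |E S_n^{\rm ber} - 1|$. Writing $\log p_k/(1+p_k) = \log p_k/p_k - \log p_k/(p_k(1+p_k))$ and invoking Mertens' second theorem, $E S_n^{\rm ber} - 1 = -(B + C)/\log p_n + o(1/\log p_n)$, where $B$ is the Mertens constant and $C = \sum_{k \ge 1}\log p_k/(p_k(1+p_k))$ is finite and positive, hence the bound is sharp. The main work lies in the elementary coupling computation $E|X_k - Y_k| = 2/(p_k^2-1)$ together with the summability estimate for $\sum_k \log p_k/p_k^2$; once these are in place, the triangle inequality and Theorem \ref{thm:num1} do the rest.
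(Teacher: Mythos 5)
Your proposal is correct, and for the upper bound it takes a genuinely different route from the paper. The paper proves Theorem \ref{thm:num} by establishing the size-bias type identity \eqref{SBBern} directly for the Bernoulli sum (Lemma \ref{lem:swan}, with $T_n$ as in \eqref{eq:Tn.Bern} and $R_\phi=0$), then feeding the estimates $|\mu_n-1|=O(1/\log n)$, $E|X_I\log p_I/\log p_n|=O(1/\log^2 n)$ and the coupling bound \eqref{Tn} into the abstract Stein-based Theorem \ref{thm:numgen}. You instead couple each ${\rm Ber}(1/(1+p_k))$ coordinate to a ${\rm Geom}(1-1/p_k)$ coordinate via $X_k=\mathds{1}\{Y_k\ge 1\}V_k$; your computation $E|X_k-Y_k|=2/(p_k^2-1)$ is correct (the memoryless step gives $E[Y_k\mid Y_k\ge 1]=p_k/(p_k-1)$), the weighted sum of these errors is $O(1/\log p_n)=O(1/\log n)$ since $\sum_k \log p_k/p_k^2<\infty$, and because ${\cal H}_{1,1}\subset{\rm Lip}_1$ the triangle inequality together with Theorem \ref{thm:num1} closes the argument. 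What your route buys: it bypasses Lemma \ref{lem:swan} entirely, is more elementary given the geometric result, and yields as a by-product that the two number-theoretic models $\Pi_n$ and $\Pi'_n$ are within $O(1/\log n)$ of each other in Wasserstein distance; what the paper's route buys is a self-contained treatment within the unified framework of Theorem \ref{thm:numgen} (paralleling \cite{AMPS}) with explicit control of each error term, which is reused for the Poisson examples. Your lower bound is the same as the paper's (test $h(x)=x$, so $d_{1,1}(S_n,D)\ge|\mu_n-1|$, and apply the refined Mertens asymptotic \eqref{Mertens} after writing $\log p_k/(1+p_k)=\log p_k/p_k-\log p_k/(p_k(1+p_k))$); one small naming quibble is that the limiting constant is not the classical Mertens constant from $\sum 1/p$ but the constant $-1.33\ldots$ in \eqref{Mertens} minus the positive sum $C$, though all that matters — and what your argument does show — is that $\log p_n(\mu_n-1)$ tends to a nonzero (strictly negative) limit, so the rate is sharp.
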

In Examples \ref{ex:Poisson1} and \ref{ex:Poisson2} we also provide such bounds when the $X_k$'s in \eqref{eq:sum.pnt} are distributed as Poisson random variables with parameters $\lambda_k>0$ given by certain functions of $p_k$. For our results in probabilistic number theory, we closely follow the arguments in \cite{AMPS}.

In Section \ref{sec:perp} we consider the connection between the class of Dickman distributions and perpetuities. By approaching from the view of utility, we extend the scope of the Dickman distributions past the currently known class. The recursion \eqref{eq:sim.dickman.theta} was interpreted by Vervaat, see \cite{Ve79}, as the relation between the values of a perpetuity at two successive times. In particular, during the $n^{th}$ time period a deposit of some fixed value, scaled to be unity, is added to the value of an asset. During that time period, a multiplicative factor in $[0,1]$, accounting for depreciation is applied; in \eqref{eq:sim.dickman.theta} that factor is taken to be
$U^{1/\theta}$. The generalized Dickman distributions arise as fixed points of this recursion, that is, solutions to $W^*=_dW$ where $W^*$ is given in \eqref{def:W*}.

Measuring the value of an asset directly by its monetary value corresponds to the case where the utility function $s(\cdot)$ of an asset is taken to be the identity. We consider the generalization of \eqref{eq:sim.dickman.theta} to
\bea \label{eq:tVervaat}
s(W_{n+1})=U_n^{1/\theta}s(W_n+1).
\ena
In \cite{Be38}, see also the translation \cite{Be54}, Daniel Bernoulli argued that utility should be given as a concave function of the value of an asset, typically justified by observing that receiving one unit of currency would be of more value to an individual who has very few resources than one who has resources in abundance, see \cite{ELS05}. We may then interpret \eqref{eq:tVervaat} in a manner similar to \eqref{eq:sim.dickman.theta}, but now in terms of utility. Again, during the $n^{th}$ time period, a constant value, scaled to be one, is added to an asset. Then, at time $n+1$, the utility of the asset is given by some discount factor applied to the incremented utility of the asset. When $s(\cdot)$ is invertible, as for the most common Vervaat perpetuities,  one can now gain insight into their long term behavior by studying fixed points of the transformation
\bea\label{bias}
W^*=_d s^{-1}(U^{1/\theta} s(W+1)).
\ena

Theorem \ref{existence} in Section \ref{sec:perp} shows that under mild and natural conditions on the utility function $s(\cdot)$ the transformation \eqref{bias} has a unique fixed point, say $D_{\theta,s}$, which we say has the $(\theta,s)$-Dickman distribution, denoted here as ${\cal D}_{\theta,s}$. As the identity function $s(x)=x$ recovers the class of generalized Dickman distributions, this extended class strictly contains them. The  parameter $\theta>0$ here plays the same role for ${\cal D}_{\theta, s}$ as it does for ${\cal D}_{\theta}$, in particular in its appearance in the distributional bounds for simulation using recursive schemes. Theorem \ref{bound} generalizes the bound \eqref{eq:bound17} of \cite{Go17v2} to the ${\cal D}_{\theta,s}$ family, providing the inequality
\bea \label{eq:d1bd.theta.s}
d_1(W,D_{\theta,s}) \le (1-\rho)^{-1} d_1(W^*,W)
\ena
with a parameter $\rho$ given by a bound on an integral involving $\theta$ and $s(\cdot)$, see \eqref{I:def} and \eqref{rholess1}.

We apply \eqref{eq:d1bd.theta.s} to assess the quality of the recursive scheme 
\bea \label{eq:intro.Wn.iteration}
W_{n+1}= s^{-1}(U_n^{1/\theta} s(W_n+1)) \qm{for $n \ge 0$ and $W_0=0$,}
\ena
for the simulation of variables having the ${\cal D}_{\theta,s}$ distribution. Simulation by these means for the ${\cal D}_\theta$ family was considered in \cite{Dev}, though no bounds on its accuracy were provided. An algorithmic method for the exact simulation from the ${\cal D}_\theta$ family was given in \cite{FiHu10} with bounds on the expected running time. In brief, the method in \cite{FiHu10} depends on the use of a multigamma coupler as an update function for the kernel $K(x,\cdot):=\mathcal{L}(U^{1/\theta}(x+1))$, and on finding a dominating chain so that one can simulate from its stationary distribution, a shifted geometric distribution in this case. To extend this approach to the more general family ${\cal D}_{\theta,s}$, one would consider the kernel $K(x,\cdot):=\mathcal{L}(U^{1/\theta}s(x+1))$, and though one can generalize the multigamma coupler for use as an update function for this kernel, finding a suitable dominating chain in this generality may not be straightforward.

The efficacy of a simpler recursive scheme for simulation from this family is addressed in \eqref{GenSimBd} of Corollary \ref{othertwo}  where we show that the iterates generated by \eqref{eq:intro.Wn.iteration} obey the inequality
\beas
d_1(W_n, D_{\theta,s})
\le(1-\rho)^{-1} \left(\frac{\theta}{\theta +1}\right)^n E[s^{-1}(U^{1/\theta})],
\enas
and which thus exhibit exponentially fast convergence.
In Section \ref{subsec:dickman.examples} we present some instances from the family ${\cal D}_{\theta,s}$ that arise as limiting distributions for perpetuities when taking our utilities $s(\cdot)$ from those studied in economics.

We obtain our results by extensions of \cite{Go17v1} for the Stein's method framework for the Dickman distribution. The application of Stein's method, as unveiled in \cite{St72} and further developed in \cite{St86}, begins with a characterizing equation for a given target distribution. Such a characterization is then used as the basis to form a Stein equation, which is usually a difference or differential equation involving test functions in a class corresponding to a desired probability metric, such as the class of ${\rm Lip}_1$ functions for the Wasserstein distance in \eqref{def:d1}. One key step of the method requires bounds on the smoothness of solutions over the given class of test functions. For a modern treatment of Stein's method, see \cite{CGS} and \cite{Ro11}.

Theorems \ref{thm:num} improves on results of \cite{AMPS}. That work applies a different version of Stein's method, and in particular does not consider any form of the Stein equation, such as \eqref{eq:Stein.smoothed.form} or \eqref{eq:stein.eq.gen.dickman}. Consequently \cite{AMPS} does not obtain bounds on a Stein solution for any Dickman case, as is achieved here in Theorems \ref{Lipschitzg} and \ref{fsolutionbds}.
Indeed, there it is noted in \cite{AMPS1} that this last step can be an `extremely difficult problem'.

In \cite{Go17v1} the Stein equation used for the ${\cal D}_\theta$ family was of the integral type
\bea \label{eq:Stein.smoothed.form}
g(x)-A_{x+1}g=h(x)-E[h(D_{\theta})]
\ena
where the averaging operator $A_x g$ was given by
\beas
A_x g = \left\{
\begin{array}{cc}
	g(0) & \mbox{for $x=0$}\\
	\frac{\theta}{x^\theta} \int_0^x  g(u)u^{\theta-1}du & \mbox{for $x>0$}.
\end{array}
\right.
\enas
To handle the ${\cal D}_{\theta,s}$ family, over the range $x>0$ we generalize the form of the averaging operator to
\bea \label{eq:averaging.tx}
A_xg=\frac{1}{t(x)}\int_0^x g(u)t'(u)du,
\ena
where $t(x)=s^\theta(x)$. Smoothness bounds for solutions of \eqref{eq:Stein.smoothed.form} with $A_x$ as in \eqref{eq:averaging.tx} and $D_\theta$ replaced by $D_{\theta,s}$, are given in Theorem \ref{Lipschitzg} in Section \ref{Smoothness} for a wide range of functions $s(\cdot)$. This generalization requires significant extensions of existing methods.

Use of the Stein equation \eqref{eq:Stein.smoothed.form} is appropriate when the variable $W$ of interest can be coupled to some $W^*$ with its $\theta$-Dickman bias distribution. However, such direct couplings appear elusive for all our examples in Section \ref{sec:sums}, including in particular those in probabilistic number theory, and a different approach is needed. To handle these new examples we consider instead a new Stein equation, of differential-delay type, given by
\bea \label{eq:stein.eq.gen.dickman}
(x/\theta) f'(x)+f(x)-f(x+1)=h(x)-E[h(D_{\theta})].
\ena
To apply the method, uniform bounds on the smoothness of the solution $f(\cdot)$ over test functions $h(\cdot)$ in some class ${\cal H}$ is required; we achieve such bounds for the class ${\cal H}_{1,1}$ in Theorem \ref{fsolutionbds} in Section \ref{Smoothness}.

Throughout the paper, for a real-valued measurable function $f(\cdot)$ on a domain $S \subset \mathbb{R}$, $\|f\|_\infty$ denotes its essential supremum norm defined by
\bea\label{esssup:def}
\|f\|_\infty =\esssup_{x \in S} |f(x)|= \inf \{b \in \mathbb{R}: m(\{x:f(x)>b\})=0\},
\ena
where $m$ denotes the Lebesgue measure on $\mathbb{R}$. For any real valued function defined on $A \subset S$ we define its supremum norm on $A$ by
	\bea\label{supnorm}
	\|f\|_{A}= \sup_{x \in A}|f(x)|.
	\ena
Unless otherwise specifically noted, integration will be with respect to $m$, which for simplicity will be denoted by, say, $dv$ when the variable of integration is $v$.

This work is organized as follows. We focus on sums, such as the Bernoulli and Poisson weighted sums in \eqref{eq:weigh.sum.Y} and \eqref{eq:Wn.YkPk}, and sums arising in probabilistic number theory as \eqref{eq:sum.pnt}, in Section \ref{sec:sums}. We focus on perpetuities, with examples, in Section \ref{sec:perp}, and in Section \ref{Smoothness} we prove smoothness bounds on the two types of Stein solutions considered here.

\section{Dickman Approximation of Sums}\label{sec:sums}
We will prove Theorems \ref{thm:weighted.Bernoullis} and \ref{thm:Poisson}, starting with a simple application of the former, in Section \ref{sub:sums}, and then provide the proofs of Theorems \ref{thm:num1} and \ref{thm:num}, in probabilistic number theory, in Section \ref{sub:pnt}. In this section we deal with the form \eqref{eq:stein.eq.gen.dickman} of the Stein equation. That is, in the proofs of Theorems \ref{thm:weighted.Bernoullis},  \ref{thm:Poisson} and \ref{thm:numgen}, we take a fixed $\theta>0$ and $h \in\mathcal{H}_{1,1}$, the
function class defined in \eqref{eq:def.calHab}, and let $f \in
\mathcal{H}_{\theta,\theta/2}$ be the solution of the Stein equation \eqref
{eq:stein.eq.gen.dickman} that is guaranteed by Theorem \ref
{fsolutionbds}. Substituting our $W_n$ of interest for $x$ in \eqref{eq:stein.eq.gen.dickman} and taking expectation yields
\bea \label{eq:SteinEqWn}
E[h(W_n)]-E[h(D_\theta)]=E\left[ (W_n/\theta) f'(W_n)-(f(W_n+1)-f(W_n)) \right].
\ena

\subsection{Weighted Bernoulli and Poisson Sums}\label{sub:sums}

We begin with a simple application of Theorem \ref{thm:weighted.Bernoullis} to the minimal directed spanning tree, or MDST, following \cite{BhRo}, first pausing to describe the construction of the MDST.

For two points $(u_1,v_1)$ and $(u_2,v_2)$ in $\mathbb{R}^2$, we write $(u_1,v_1) \preceq (u_2,v_2)$ if $u_1 \le u_2$ and $v_1 \le v_2$, and write $(u_1,v_1) \not \preceq (u_2,v_2)$ otherwise. For any set of points $\cal V$ in $\mathbb{R}^2$, we say $(u,v) \in {\cal V}$ is a minimal point, or sink, of $\cal V$ if $(a,b) \not \preceq (u,v)$ for all $(a,b) \in {\cal V}, (a,b)\not =(u,v)$.

For $n \in \mathbb{N}$, consider a set of $n+1$ distinct points ${\cal V}=\{ (a_i,b_i), 0 \le i \le n \}$ in $[0,1] \times [0,1]$ where we take $(a_0,b_0)=(0,0)$, the origin. Let $E$ be the set of directed edges $(a_i,b_i) \to (a_j,b_j)$ with $i \not= j$ and $(a_i,b_i) \preceq (a_j,b_j)$. Since $(0,0) \preceq (a_i,b_i)$ for all $i=1,\ldots,n$, the edge set $E$ contains all the directed edges $(a_0,b_0) \to (a_i,b_i)$ with $i \not= 0$. Let $\mathscr{G}$ be the collection of all graphs $G$ with vertex set $G_V=\mathcal{V}$ and edge set $G_E \subseteq E$ such that for any $1\le j \le n$, there exists a directed path from $(a_0,b_0)$ to $(a_j,b_j)$ with each edge in $G_E$. We define a MDST on $\cal V$ as any graph $T \in \mathscr{G}$ that minimizes $\sum_{e \in G_E} |e|$ where $|e|$ denotes the Euclidean length of the edge $e$. Clearly $T$ is a tree and need not be unique. 

Now let $\cal P$ be a random collection of $n$ points uniformly and independently placed in the unit square $[0,1]^2$ in $\mathbb{R}^2$. In this random setting, the MDST on the point set $\mathcal{V}=P \cup \{(0,0)\}$ is uniquely defined almost surely, see \cite{BhRo}. By relabeling the points according to the size of their $x$-coordinate, without loss of generality, we may let the points in $\mathcal{P}$ be $(X_1,Y_1), \dots, (X_n,Y_n)$ where $Y_1, \dots ,Y_n$ are independent ${\cal U}[0,1]$ random variables, and also independent of $X_1,\ldots,X_n$, where $0<X_1< X_2 < \dots <X_n<1$ have the distribution of the order statistics generated from a sample of $n$ independent ${\cal U}[0,1]$ variables.

Though the origin is the unique minimal point of $\cal V$, the usual set of interest is the collection of minimal points of $\cal P$, which has size at least one. For $i=1,\ldots,n$, observe that $(X_i,Y_i)$ is a minimal point of $\mathcal{P}$ if and only if $Y_j>Y_i$ for all $j<i$. One much studied quantity in this context is the sum $S_n$ of the $\alpha^{th}$ powers of the Euclidean distances between the minimal points of the process and the origin for some $\alpha>0$; the work \cite{PW04} shows that $S_n$ converges to $D_{2/\alpha}$ in distribution as $n$ tends to infinity.

The lower record times $R_1,R_2,\ldots$ of the height process $Y_1,\dots,Y_n$ are also studied, see \cite{BhRo}, and are defined  by letting $R_1=1$, and for $i>1$ by
\beas
R_i = \left\{
\begin{array}{cc}
	\infty & \qmq{if $Y_j \ge Y_{R_{i-1}}$ for all $j>R_{i-1}$ or if $R_{i-1}\ge n$,} \\
	\min \{j>R_{i-1}:Y_j<Y_{R_{i-1}}\} & \text{otherwise}.
\end{array}
\right.
\enas 
In terms of these record times, the collection of the $k(n)$ minimal points inside the unit square is given by $(X_{R_i},Y_{R_i})$ for $i=1,\ldots,k(n)$.  We claim that the scaled sum of lower record times
\bea \label{eq:Wn.lower.record}
W_n = \frac{1}{n}\sum_{i=1}^{k(n)}R_i
\ena
can be approximated by the Dickman distribution ${\cal D}$ in the Wasserstein-2 metric in \eqref{def:d111} to within the bound specified by inequality \eqref{eq:ref.for.rec} of Theorem \ref{thm:weighted.Bernoullis}. Indeed, for $1 \le j \le n$, letting
\beas
B_k = \mathds{1}(k \in \{R_1,\ldots,R_{k(n)}\})
\enas
we have that $\sum_{i=1}^{k(n)} R_i=\sum_{k=1}^n k B_k$. As 
Lemma 2.1 of \cite{BhRo} shows that $B_1,\ldots,B_n$ are independent with $B_k \sim {\rm Ber}(1/k)$ for $1 \le k \le n$,  Theorem \ref{thm:weighted.Bernoullis} yields the claimed bound for the Dickman approximation of \eqref{eq:Wn.lower.record}. 

We now present the proof of our first main result.

\noindent {\em Proof of Theorem \ref{thm:weighted.Bernoullis}:}  Let $W_n$ be as in \eqref{eq:weigh.sum.Y} and take $\theta=1$ in \eqref{eq:SteinEqWn}. Letting
\beas
W_n^{(k)} = W_n-\frac{Y_k}{n}B_k,
\enas
evaluating the first term on the right hand side of \eqref{eq:SteinEqWn} yields
\begin{multline*}
E[W_nf'(W_n)]=E\left[\frac{1}{n}\sum_{k=1}^n Y_kB_k f'(W_n)\right]=\frac{1}{n}\sum_{k=1}^n E\left[Y_kB_k f'\left(W_n^{(k)}+\frac{Y_k}{n}B_k\right)\right]\\
=\frac{1}{n}\sum_{k=1}^n E\left[Y_kf'\left(W_n^{(k)}+\frac{Y_k}{n}\right)\right]P(B_k=1)=\frac{1}{n}\sum_{k=1}^n E\left[\frac{Y_k}{k}f'\left(W_n^{(k)}+\frac{Y_k}{n}\right)\right].
\end{multline*}
The right hand side of \eqref{eq:SteinEqWn} is therefore the expectation of 
\begin{multline}\label{brkdown}
	\frac{1}{n}\sum_{k=1}^n \frac{Y_k}{k}f'\left(W_n^{(k)}
	+\frac{Y_k}{n}\right) - \int_0^1 f'(W_n+u)du 
	= \frac{1}{n}\sum_{k=1}^n \frac{Y_k}{k} \left(f'\left(W_n^{(k)} +\frac{Y_k}{n}\right)-f'\left(W_n^{(k)}+\frac{k}{n}\right) \right) 
	\\+ \frac{1}{n}\sum_{k=1}^n \left(\frac{Y_k}{k} f'\left(W_n^{(k)}+\frac{k}{n}\right)-f'\left(W_n^{(k)}+\frac{k}{n}\right) \right) \\+ \frac{1}{n}\sum_{k=1}^n \left(f'\left(W_n^{(k)}+\frac{k}{n}\right) -f'\left(W_n+\frac{k}{n}\right) \right) 
	+\left(\frac{1}{n}\sum_{k=1}^n f'\left(W_n+\frac{k}{n}\right)- \int_0^1 f'(W_n+u)du\right).
\end{multline}
Using that $f \in \mathcal{H}_{1,1/2}$, and hence in particular that $f'(\cdot)$ is Lipschitz, applying the Cauchy-Schwarz inequality to the first difference on the right hand side of \eqref{brkdown} we find that the expectation of that term is bounded by
\beas
\frac{\|f''\|_\infty}{n^2}\sum_{k=1}^nE\left[\frac{|Y_k|}{k}|Y_k-k| \right] \le \frac{1}{2n^2} \sum_{k=1}^n \frac{1}{k} \sqrt{(\sigma_k^2 + k^2) \sigma_k^2}.
\enas
The expectation of the second difference is zero as $E[Y_k]=k$ and $Y_k$ is independent of $W_n^{(k)}$. 
For the expectation of the third difference, noting that $E[Y_k B_k]=1$, we similarly obtain the bound
\beas
\frac{\|f''\|_\infty}{n}\sum_{k=1}^n  E|W_n^{(k)}-W_n|\le \frac{1}{2n}\sum_{k=1}^n E\left[\frac{Y_k}{n}B_k\right]=\frac{1}{2n}.
\enas
Finally, for the fourth difference,  applying that same bound on the second derivative of $f(\cdot)$, almost surely

\begin{multline*}
\Bvert \frac{1}{n}\sum_{k=1}^n f'\left(W_n+\frac{k}{n}\right)- \int_0^1 f'(W_n+u)du \Bvert \le \sum_{k=1}^n \int_{\frac{k-1}{n}}^{\frac{k}{n}}\Bvert [f'(W_n+k/n)- f'(W_n+u)]\Bvert du \\
\le \frac{1}{2}\sum_{k=1}^n \int_{\frac{k-1}{n}}^{\frac{k}{n}} (k/n-u)du = \frac{1}{2} \left( \frac{1}{n^2}\sum_{k=1}^n k - \int_0^1 udu \right)= \frac{1}{4n}.
\end{multline*}
Combining these three bounds yields, via \eqref{eq:SteinEqWn} with $\theta=1$, that
\beas
|E[h(W_n)]-E[h(D)]|
\le \frac{3}{4n}+ \frac{1}{2n^2} \sum_{k=1}^n \frac{1}{k} \sqrt{(\sigma_k^2 + k^2) \sigma_k^2}.
\enas
Taking the supremum over ${\cal H}_{1,1}$ and recalling the definition of the norm $d_{1,1}$ in \eqref{def:d111} now yields the theorem. The final claim \eqref{eq:ref.for.rec} holds as $\sigma_k^2=0$ when $Y_k=k$ a.s. \qed

We turn now to the proof of our next main result, proceeding along the same lines as in the proof of Theorem \ref{thm:weighted.Bernoullis}. We first recall the well known Stein identity for the Poisson distribution, see e.g. \cite{Chen}, that
\bea \label{eq:Stein.for.Poisson}
P \sim {\cal P}(\lambda) \qmq{if and only if} E[P g(P)]=\lambda E[g(P+1)] 
\ena
for all functions $g(\cdot)$ on the non-negative integers for which the expectation of either side exists.

\noindent {\em Proof of Theorem \ref{thm:Poisson}:}
Consider equation \eqref{eq:SteinEqWn} with $W_n$ as in \eqref{eq:Wn.YkPk} and $h(\cdot)$ an arbitrary function in $\mathcal{H}_{1,1}$, and $f \in \mathcal{H}_{\theta,\theta/2}$ the solution of \eqref{eq:stein.eq.gen.dickman} guaranteed by Theorem \ref{fsolutionbds}. For $k=1,\ldots,n$ set $W_n^{(k)}=W_n-Y_kP_k/n$.  Using that $P_1,\ldots,P_n,Y_1,\ldots,Y_n$ are independent with $P_k \sim {\cal P}(\theta/k)$ and \eqref{eq:Stein.for.Poisson} for the second equality, letting $S_k=\{Y_j, j \in \{1,\ldots,n\}, P_j, j \in \{1,\ldots,n\} \setminus \{k\}\}$,  we have
\begin{multline*}
E[(W_n/\theta)f'(W_n)] = \frac{1}{\theta n} \sum_{k=1}^n E\left[Y_k E\left[P_k f'(W_n^{(k)}+Y_kP_k/n)|S_k\right]\right] \\
=  \frac{1}{n} \sum_{k=1}^n E\left[\frac{Y_k}{k}  E\left[f'(W_n^{(k)}+Y_kP_k/n+Y_k/n)|S_k \right]\right] =\frac{1}{n} \sum_{k=1}^n  E\left[\frac{Y_k}{k}f'\left(W_n+Y_k/n\right)\right].
\end{multline*}
Thus, via \eqref{eq:SteinEqWn}, we obtain
\begin{align}\label{Poibrkdown}
&E[h(W_n)]-E[h(D_\theta)]=E\left[(W_n/\theta)f'(W_n)-(f(W_n+1)-f(W_n)) \right]\nonumber\\
&=E\left[\frac{1}{n}\sum_{k=1}^n \frac{Y_k}{k}f'\left(W_n+\frac{Y_k}{n}\right) - \int_0^1 f'(W_n+u)du\right]\nonumber\\
&=E\left[\frac{1}{n}\sum_{k=1}^n \left(\frac{Y_k}{k}f'\left(W_n+\frac{Y_k}{n}\right) - f'\left(W_n+\frac{k}{n}\right)\right)\right]\nonumber\\
&\qquad\qquad\qquad\qquad\qquad\qquad\qquad+ E\left[ \frac{1}{n} \sum_{k=1}^n  f'\left(W_n+\frac{k}{n} \right)- \int_0^1 f'(W_n+u)du\right].
\end{align}
Now for the second term in \eqref{Poibrkdown}, since $f \in \mathcal{H}_{\theta,\theta/2}$, as for this same term that appears in the proof of Theorem \ref{thm:weighted.Bernoullis}, we have almost surely that
\bea\label{second}
\Bvert \frac{1}{n} \sum_{k=1}^n  f'(W_n+k/n) - \int_0^1 f'(W_n+u)du  \Bvert 
\le \frac{\theta}{4n}.
\ena
Now we write the first term in \eqref{Poibrkdown} as the expectation of
\begin{multline}\label{Furtherbrkdwn}
\frac{1}{n}\sum_{k=1}^n\left[\frac{Y_k}{k}f'\left(W_n+\frac{Y_k}{n}\right)- \frac{Y_k}{k}f'\left(W_n+\frac{k}{n}\right)\right]\\
+\frac{1}{n}\sum_{k=1}^n \left[\frac{Y_k}{k}f'\left(W_n+\frac{k}{n}\right)- f'\left(W_n+\frac{k}{n}\right)\right] .
\end{multline}
As in proof of Theorem \ref{thm:weighted.Bernoullis}, recalling that $f \in \mathcal{H}_{\theta,\theta/2}$, the expectation of the first term in \eqref{Furtherbrkdwn} is bounded by
\beas
\frac{\|f''\|_\infty}{n^2}\sum_{k=1}^nE\left[\frac{|Y_k|}{k}|Y_k-k| \right] \le \frac{\theta}{2n^2} \sum_{k=1}^n \frac{1}{k} \sqrt{(\sigma_k^2 + k^2) \sigma_k^2}.
\enas
The expectation of the second term in \eqref{Furtherbrkdwn} can be bounded by
\beas
\frac{\|f'\|_\infty}{n}\sum_{k=1}^n E\Bvert \frac{Y_k}{k}-1 \Bvert \le \frac{\theta}{n} \sum_{k=1}^n \frac{\sigma_k}{k}.
\enas
Assembling the bounds on the terms arising from \eqref{Poibrkdown}, consisting of \eqref{second} and the two inequalities above, we obtain
\beas
|E[h(W_n)]-E[h(D_\theta)]| \le \frac{\theta}{4n} +\frac{\theta}{n} \sum_{k=1}^n \frac{\sigma_k}{k}+ \frac{\theta}{2n^2} \sum_{k=1}^n \frac{1}{k} \sqrt{(\sigma_k^2 + k^2) \sigma_k^2}.
\enas
Taking the supremum over $h \in {\mathcal H}_{1,1}$ and applying definition \eqref{def:d111} completes the proof of \eqref{PoisY}. The inequality in \eqref{Pois} follows by observing that $\sigma_k^2=0$ when $Y_k=k$ a.s.
\qed

\subsection{Dickman approximation in number theory}\label{sub:pnt}
Let $(p_k)_{k \ge 1}$ be an enumeration of the prime numbers in increasing order. Let $(X_k)_{k \ge 1}$ be a sequence of independent integer valued random variables and let
\bea\label{def:sn}
S_n=\frac{1}{\log(p_n)} \sum_{k=1}^n X_k \log(p_k) \quad \mbox{for $n \ge 1$.}
\ena 
Weak convergence of $S_n$ to the Dickman distribution in the cases when the $X_k$'s are distributed as geometric and Bernoulli variables is well known in probabilistic number theory, and \cite{AMPS} recently provided a rate of convergence in the Bernoulli case. We give bounds in a stronger metric and remove a logarithmic factor from their rate. We also prove such bounds when the $X_k$'s are distributed as geometric or Poisson with parameters given by certain functions of $p_k$. For our results in this area, we rely heavily on the techniques in the proof of Lemma \ref{lem:swan} of  \cite{AMPS}; in particular, the
identity \eqref{steincoeff} below, without remainder, is due to \cite{AMPS}. We begin with the following abstract theorem.

\begin{theorem}\label{thm:numgen}
Let $S$ be a non-negative random variable with finite variance such that for some constant $\mu$ and a random variable $T$ satisfying $P(S+T=0)=0$,
\bea\label{steincoeff}
E[S \phi (S)]=\mu E[\phi(S+T)]+ R_\phi \qmq{for all $\phi \in {\rm Lip}_{1/2}$,}
\ena
where the constant $R_\phi(\cdot)$ may depend on $\phi(\cdot)$. Then 
\bea \label{eq:inf.TU}
d_{1,1}(S,D) \le |\mu-1| + \frac{1}{2} \inf_{(T,U)} E|T - U| +\sup_{\phi \in {\rm Lip_{1/2}}} |R_\phi|
\ena
where $D$ is a standard Dickman random variable, and the infimum is over all couplings $(T,U)$ of $T$ and $U\sim {\cal U}[0,1]$ constructed on the same space as $S$, with $U$ independent of $S$.
\end{theorem}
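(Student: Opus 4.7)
The plan is to apply the differential--delay Stein equation \eqref{eq:stein.eq.gen.dickman} for the standard Dickman, i.e.\ with $\theta=1$. Given any test function $h \in \mathcal{H}_{1,1}$, Theorem \ref{fsolutionbds} supplies a solution $f \in \mathcal{H}_{1,1/2}$, so that $\|f'\|_\infty \le 1$ and $f'$ is $(1/2)$-Lipschitz. Substituting $S$ for $x$ in the Stein equation and taking expectations gives
\beas
E[h(S)] - E[h(D)] = E[S f'(S)] - E\bigl[ f(S+1) - f(S) \bigr].
\enas
Writing $f(S+1) - f(S) = \int_0^1 f'(S+u)\,du$ and introducing $U \sim \mathcal{U}[0,1]$ independent of $S$, the second expectation becomes $E[f'(S+U)]$.

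Next, since $f' \in {\rm Lip}_{1/2}$, the hypothesis \eqref{steincoeff} applies with $\phi = f'$ and yields $E[S f'(S)] = \mu E[f'(S+T)] + R_{f'}$. Substituting, and then adding and subtracting $E[f'(S+T)]$,
\beas
E[h(S)] - E[h(D)] = (\mu - 1)\,E[f'(S+T)] + \bigl( E[f'(S+T)] - E[f'(S+U)] \bigr) + R_{f'}.
\enas
The first term is bounded in absolute value by $|\mu-1|$ via $\|f'\|_\infty \le 1$. For the second, the $(1/2)$-Lipschitz property of $f'$ yields
\beas
\bigl| E[f'(S+T)] - E[f'(S+U)] \bigr| \le \tfrac{1}{2}\, E|T-U|
\enas
for any coupling $(T,U)$ built on a common space with $U$ uniform and independent of $S$. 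The third contributes at most $\sup_{\phi \in {\rm Lip}_{1/2}} |R_\phi|$.

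Assembling these three estimates, then taking the infimum over all admissible couplings $(T,U)$ and finally the supremum over $h \in \mathcal{H}_{1,1}$ in the definition \eqref{def:d111} of $d_{1,1}$ will deliver \eqref{eq:inf.TU}. The only nontrivial input is the smoothness of the Stein solution supplied by Theorem \ref{fsolutionbds}; once that is in hand, everything else is a direct Lipschitz estimate together with the triangle inequality. The hypothesis $P(S+T=0)=0$ plays no role in the bookkeeping, serving only to ensure that the identity \eqref{steincoeff} can be stated unambiguously for $\phi$ restricted to the range of $S+T$.
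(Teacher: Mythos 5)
Your argument is correct and is essentially the paper's own proof: the $\theta=1$ Stein equation, the solution $f$ supplied by Theorem \ref{fsolutionbds}, writing $f(S+1)-f(S)=E[f'(S+U)\mid S]$, substituting $\phi=f'$ in \eqref{steincoeff}, adding and subtracting $E[f'(S+T)]$, the Lipschitz estimates, and finally the infimum over couplings $(T,U)$ and the supremum over $h\in{\cal H}_{1,1}$. The only small correction concerns your closing remark: the hypothesis $P(S+T=0)=0$ is not merely cosmetic, since the bounds $\|f'\|_{(0,\infty)}\le 1$ and $\|f''\|_{(0,\infty)}\le 1/2$ of Theorem \ref{fsolutionbds} are norms over $(0,\infty)$, and the paper invokes this hypothesis precisely so that these bounds may be applied at $S+T$ almost surely.
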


\begin{remark}
We note the connection between the relation in \eqref{steincoeff} and size biasing, where for a non-negative random variable $S$ with finite mean $\mu$, we say $S^s$ has the $S$-size biased distribution when 
		\beas 
		E[S \phi (S)]=\mu E[\phi(S^s)]
		\enas
		for all functions $\phi(\cdot)$ for which these expectations exist. In particular, when $R_\phi$ in \eqref{steincoeff} is zero for all $\phi \in {\rm Lip}_{1/2}$, we obtain that $S^s=_d S+T$; for an application which requires the remainder, see Lemma \ref{lem:geom}. Additionally, Section 4.3 of \cite{ABT} shows that the standard Dickman $D$ is the unique non-negative solution to the distributional equality $W^s =_d W+U$, where $U$ is ${\cal U}[0,1]$, and independent of $W$. Hence, the error term comparing $T$ and $U$ in Theorem \ref{thm:numgen} is natural.
\end{remark}

\noindent {\em Proof of Theorem \ref{thm:numgen}:}
	We first show that the set of couplings over which the infimum is taken in \eqref{eq:inf.TU} is non-empty. Note that the case when $S$ is identically zero is trivial since one can take $\mu=0$, $T=0$ and $R_\phi=0$ for all $\phi \in {\rm Lip}_{1/2}$. For a nontrivial $S$, let $\mu=E[S]$, and let $S^s$ and $U$ be constructed on the same space as $S$, independently of $S$, with $S^s$ having the $S$-size biased distribution and $U \sim {\cal U}[0,1]$. Then setting
	$T=S^s-S$ identity \eqref{steincoeff} is satisfied with $R_\phi=0$ for all $\phi \in {\rm Lip}_{1/2}$, and the pair $(T,U)$ satisfies the conditions required of the infimum in the theorem.
	
	 Invoking Theorem \ref{fsolutionbds} with $\theta=1$, for any given $h \in {\cal H}_{1,1}$ there exists a function $f(\cdot)$ satisfying $\|f'\|_{(0,\infty)} \le 1$ and $\|f''\|_{(0,\infty)} \le 1/2$ such that
	\beas
	E[h(S)]-E[h(D)]=E[Sf'(S)+f(S)-f(S+1)].
	\enas
	Now consider $\mu$ and
	$T$ satisfying \eqref{steincoeff} with $(T,U)$ constructed on the same space as $S$, with $U \sim {\cal U}[0,1]$ and independent of $S$. Then, using $P(S+T=0)=0$, allowing us to apply the bounds of Theorem \ref{fsolutionbds} over $(0,\infty)$, the mean value theorem for the second inequality and recalling definitions \eqref{esssup:def} and \eqref{supnorm}, we obtain
	\begin{multline*}
	|E[h(S)]-E[h(D)]|=|E[Sf'(S) - f'(S+U)]|=|E[\mu f'(S+T)-f'(S+U)+R_{f'}]|\\
	\le |E[\mu f'(S+T)-f'(S+T)]| + |E[f'(S+T)-f'(S+U)]| +|R_{f'}| \\
	\le \|f'\|_{(0,\infty)} |\mu-1| + \|f''\|_{(0,\infty)} E|T - U| + |R_{f'}| \le |\mu-1| +  \frac{1}{2}E|T - U| + |R_{f'}|.
	\end{multline*}
	Now taking the infimum on the right hand side over all couplings $(T,U)$ satisfying the conditions of the theorem  yields
	\beas
	|E[h(S)]-E[h(D)]| \le |\mu-1| + \frac{1}{2} \inf_{(T,U)} E|T - U| + |R_{f_{h}'}|,
	\enas
	where we have written $f=f_h$ to emphasize the dependence of $f(\cdot)$ on $h(\cdot)$.
	Taking supremum over $h \in {\cal H}_{1,1}$ first on the right, and then on the left now yields the result upon applying definition \eqref{def:d111}. 
	\qed
	
Now we will demonstrate a few applications of Theorem \ref{thm:numgen}. In all these examples the conditions that the variance of $S$ is finite and that $S+T>0$ almost surely are straightforward to check, and will not be mentioned further. For $n \ge 1$, let $\Omega_n$ denote the set of integers with no prime factor larger than $p_n$, and let $\Pi_n$ be the distribution on $\Omega_n$ with mass function
\beas
\Pi_n (m)=\frac{1}{\pi_nm}  \qmq{for $m \in \Omega_n$}
\enas 
where $\pi_n=\sum_{m \in \Omega_n} 1/m$ is the normalizing factor. One can check, see e.g. Proposition 1 in \cite{Pi16}, that $M_n=\prod_{k=1}^n p_k^{X_k}$ has distribution $\Pi_n$, where $X_k \sim {\rm Geom}(1-1/p_k)$ are independent for $1 \le k \le n$ ; we remind the reader that we write $X \sim {\rm Geom}(p)$ when $P(X=m)=(1-p)^m p$ for $m \ge 0$. For $n \ge 1$, the random variable $S_n$ as in \eqref{def:sn} is therefore given by
\bea \label{eq:Sn.primes}
S_n=\frac{1}{\log(p_n)} \sum_{k=1}^n X_k \log(p_k)=\frac{\log M_n}{\log (p_n)}.
\ena
Taking the mean, we find
\bea \label{eq:mun.geom}
\mu_n=E[S_n]= \frac{1}{\log(p_n)} \sum_{k=1}^n \frac{\log(p_k)}{p_k-1}.
\ena
Now define the random variable $I$ taking values in $\{1, \dots, n\}$, and independent of $S_n$, with mass function
\bea \label{eq:defI.geom}
P(I=k)=\frac{\log(p_k)}{(p_k-1) \log(p_n)\mu_n} \qmq{for $k \in \{1, \dots, n\}$.}
\ena
The next lemma very closely follows the arguments in Lemmas 3 and 5 of \cite{AMPS} and is included here only for completeness. In the proof, we will use the statement, equivalent \cite{HW79} to the prime number theorem, that $\lim_{n \to \infty} p_n/(n \log n) = 1$, and Rosser's Theorem \cite{Ro39}, to respectively yield that
	\bea\label{logpn}
	\log p_n = \log n + O(\log \log n) \qmq{and} p_k > k \log k.
\ena 
We will also use the follwing stronger version of Merten's theorem, see \cite{Fi03}: For $j \ge 1$ and $\gamma$ the Euler constant,
\bea\label{Mertens}
\sum_{k=1}^{j} \log (p_k)/p_k = \log (p_{j}) + R_j \qmq{with} \lim_{j \to \infty} R_j=-\gamma-\sum_{k=1}^{\infty} \frac{\log (p_k)}{(p_k-1)p_k}=-1.33\dots
\ena
\begin{lemma}\label{lem:geom}
	Let $S_n$ be as in \eqref{eq:Sn.primes} with $X_1,\ldots,X_n$ independent with $X_k \sim {\rm Geom}(1-1/p_k)$, $\mu_n$ as in \eqref{eq:mun.geom}, $I$ with distribution given in \eqref{eq:defI.geom} and independent of $S_n$ and 
	\beas
	T_n=\frac{\log(p_I)}{\log(p_n)} \qmq{and} R_{n,\phi}=\frac{1}{\log(p_n)} \sum_{k=1}^n \frac{ \log(p_k)}{p_k-1}E\left[X_k \left(\phi\left(S_n+ \frac{\log(p_k)}{\log(p_n)}\right)-\phi(S_n)\right)\right].
	\enas
	Then
	\beas
	E[S_n \phi (S_n)]=\mu_n E[\phi(S_n+T_n)] + R_{n,\phi} \qmq{for all $\phi \in {\rm Lip}_{1/2}$.}
	\enas
	Moreover
	\beas
	\sup_{\phi \in {\rm Lip}_{1/2}} |R_{n,\phi}|=O\left(\frac{1}{\log^2 n}\right) \qmq{and} \mu_n-1=O\left(\frac{1}{\log n}\right),
	\enas
	and there exists a coupling between $U \sim {\cal U}[0,1]$ and $T_n$ with $U$ independent of $S_n$, such that
	\beas
	E|T_n-U|=O\left(\frac{1}{\log n}\right).
	\enas
\end{lemma}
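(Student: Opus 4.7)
The argument rests on a Stein-type identity for the geometric distribution together with classical estimates for the primes. For $X \sim \mathrm{Geom}(1-q)$ and any bounded $\psi$, shifting the summation index via $m \mapsto m+1$ in $\sum_{m \ge 0} m(1-q)q^m\psi(m)$ and collecting the $E[X\psi(X)]$ terms on the left gives
\[
E[X \psi(X)] = \frac{q}{1-q}\bigl( E[\psi(X+1)] + E[X(\psi(X+1)-\psi(X))] \bigr).
\]
I apply this with $X = X_k$, $q = 1/p_k$, and $\psi(m) = E[\phi(S_n^{(k)} + m\log(p_k)/\log(p_n))]$, where $S_n^{(k)} = S_n - X_k\log(p_k)/\log(p_n)$ is independent of $X_k$. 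Substituting into $E[S_n\phi(S_n)] = (\log p_n)^{-1}\sum_k \log(p_k)\, E[X_k \phi(S_n)]$, the leading term matches $\mu_n E[\phi(S_n+T_n)]$ once one recognizes $\log(p_k)/((p_k-1)\log(p_n)\mu_n)$ as $P(I=k)$, while the correction term is exactly $R_{n,\phi}$.

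For the bound on $R_{n,\phi}$, the Lipschitz property of $\phi$ gives $|\phi(S_n+\log(p_k)/\log(p_n))-\phi(S_n)| \le \log(p_k)/(2\log(p_n))$ almost surely, and since $E[X_k] = 1/(p_k-1)$,
\[
|R_{n,\phi}| \le \frac{1}{2\log^2(p_n)}\sum_{k=1}^n \frac{(\log p_k)^2}{(p_k-1)^2}.
\]
Rosser's estimate $p_k > k\log k$ from \eqref{logpn} together with $\log p_k = O(\log k)$ bounds each summand by $C/k^2$, so the sum is $O(1)$, and $\log p_n = \log n + O(\log\log n)$ delivers the claimed $O(1/\log^2 n)$ rate. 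For $\mu_n$, write $\log(p_k)/(p_k-1) = \log(p_k)/p_k + \log(p_k)/(p_k(p_k-1))$: Mertens' theorem \eqref{Mertens} yields $\sum_{k=1}^n \log(p_k)/p_k = \log p_n + O(1)$, and the same prime-gap estimate shows $\sum_k \log(p_k)/(p_k(p_k-1)) = O(1)$. Dividing by $\log p_n$ gives $\mu_n - 1 = O(1/\log p_n) = O(1/\log n)$.

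For the coupling, use the optimal one-dimensional transport, constructing $U \sim \mathcal{U}[0,1]$ as a measurable function of $I$ and an auxiliary uniform variable, both independent of $S_n$, so that $E|T_n-U| = \int_0^1 |F_{T_n}(t)-t|\,dt$. For each $t \in [0,1]$,
\[
F_{T_n}(t) = \frac{1}{\mu_n \log p_n}\sum_{p_k \le p_n^t}\frac{\log p_k}{p_k-1},
\]
and the decomposition used for $\mu_n$, now applied with cutoff $p_n^t$ in place of $p_n$, gives $\sum_{p_k \le p_n^t}\log(p_k)/(p_k-1) = t\log p_n + O(1)$. Combined with $\mu_n^{-1} = 1 + O(1/\log n)$, this yields $F_{T_n}(t)-t = O(1/\log n)$ uniformly in $t$, and integrating gives the claimed rate. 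The main obstacle is ensuring that the Mertens remainder is uniform in the cutoff $p_n^t$; this is precisely what the $O(1)$ error term in the classical strong form of Mertens' theorem supplies.
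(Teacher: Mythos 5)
Your proposal is correct, and for the identity and the bounds on $R_{n,\phi}$ and $\mu_n-1$ it follows essentially the paper's own route: your geometric identity $E[X\psi(X)]=\frac{q}{1-q}\bigl(E[\psi(X+1)]+E[X(\psi(X+1)-\psi(X))]\bigr)$ is equivalent to the paper's identity \eqref{geomstid} applied to $g(x)=x\phi\bigl(S_n^{(k)}+x\log(p_k)/\log(p_n)\bigr)$, and your estimates $|R_{n,\phi}|\le \frac{1}{2\log^2(p_n)}\sum_k \log^2(p_k)/(p_k-1)^2$ and $\mu_n-1=O(1/\log n)$ use exactly the same Mertens and Rosser inputs \eqref{Mertens} and \eqref{logpn}. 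Where you genuinely differ is the coupling step: the paper builds $I$ from $U$ by inverting the distribution function of $I$ and bounds $|U-\log(p_I)/\log(p_n)|$ deterministically on each event $\{I=j\}$ via estimates on $F_j-\log(p_j)/\log(p_n)$ and $P(I=j)=O(1/(j\log n))$, then sums over $j$; you instead take the optimal (quantile) coupling, use $E|T_n-U|=\int_0^1|F_{T_n}(t)-t|\,dt$, and prove the uniform bound $F_{T_n}(t)-t=O(1/\log n)$ by applying Mertens' theorem with the real cutoff $p_n^t$. The two couplings coincide (the quantile coupling is precisely the paper's inverse-CDF construction, and your independence requirement holds since $(I,U)$ is built from randomness independent of $X_1,\ldots,X_n$, hence of $S_n$), but your error analysis is cleaner and yields the stronger uniform statement $\sup_{t\in[0,1]}|F_{T_n}(t)-t|=O(1/\log n)$; the price is that you need the remainder in Mertens' estimate $\sum_{p\le x}\log p/p=\log x+O(1)$ to be uniform over real cutoffs $x\ge 2$, which is classical, or alternatively can be recovered from the paper's prime-indexed form together with Bertrand's postulate $p_{j+1}\le 2p_j$ to compare $\log p_{j(t)}$ with $t\log p_n$ (and the range $t<\log 2/\log p_n$, where the sum is empty, is trivially $O(1/\log n)$). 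In short: same key lemma and same arithmetic estimates, with a mildly different but valid treatment of the coupling error.
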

	\begin{proof}
	It is easily verified that for $X \sim {\rm Geom}(p)$,
	\bea \label{geomstid}
	E[g(X)]=\frac{1-p}{p}E[g(X+1)-g(X)]
	\ena
	for all functions $g(\cdot)$ for which these expectations exist, and which satisfy $g(0)=0$.
	Let $S_n^{(k)}=S_n - X_k \log(p_k)/\log(p_n)$. Since $X_k \sim {\rm Geom} (1-1/p_k)$, specializing \eqref{geomstid} to the case $g(x)=x\phi(S_n^{(k)}+x \log(p_k)/ \log(p_n))$, conditioning on $S_n^{(k)}$ in the second equality and using the independence of $I$ and $S_n$ in the last, for $\phi \in {\rm Lip}_{1/2}$ we have
	\begin{multline*}
	E[S_n \phi (S_n)]=\frac{1}{\log(p_n)} \sum_{k=1}^n \log(p_k) E\left[X_k \phi\left(S_n^{(k)}+X_k \frac{\log(p_k)}{\log(p_n)}\right)\right]\\
	=\frac{1}{\log(p_n)} \sum_{k=1}^n \frac{\log(p_k)(1/p_k)}{1-1/p_k} E\left[(X_k+1) \phi\left(S_n+ \frac{\log(p_k)}{\log(p_n)}\right)-X_k\phi(S_n) \right]\\
	=\frac{1}{\log(p_n)} \sum_{k=1}^n \frac{\log(p_k)}{p_k-1} E\left[\phi\left(S_n+ \frac{\log(p_k)}{\log(p_n)}\right) + X_k \left(\phi\left(S_n+ \frac{\log(p_k)}{\log(p_n)}\right)-\phi(S_n)\right) \right]\\
	=\mu_n \sum_{k=1}^n \frac{\log(p_k)}{(p_k-1)\log (p_n)\mu_n} E\left[\phi\left(S_n+ \frac{\log(p_k)}{\log(p_n)}\right)\right] + R_{n,\phi}\\
	=\mu_n \sum_{k=1}^n P(I=k) E\left[\phi\left(S_n+ \frac{\log(p_k)}{\log(p_n)}\right)\right] + R_{n,\phi}=\mu_n E[\phi(S_n+T_n)] + R_{n,\phi},
	\end{multline*}
	proving the first claim. 
	
	Next, using mean value theorem and that $\|\phi'\|_\infty \le 1/2$ in the first inequality, we have
	\begin{multline}
	|R_{n,\phi}|=\Bvert\frac{1}{\log(p_n)} \sum_{k=1}^n \frac{\log(p_k)}{p_k-1} E\left[X_k \left(\phi \left(S_n+ \frac{\log(p_k)}{\log(p_n)}\right)-\phi(S_n)\right)\right]\Bvert\\
	\le \frac{1}{2\log(p_n)} \sum_{k=1}^n \frac{ \log^2(p_k)}{(p_k-1)\log(p_n)} EX_k=\frac{1}{2\log^2 (p_n)} \sum_{k=1}^n \frac{\log^2(p_k)}{(p_k-1)^2}=O\left(\frac{1}{\log^2 n}\right)
	\label{eq:see.final.sum}
	\end{multline}
	where in the last step, we have used that the second relation in \eqref{logpn} to lower bound $p_n$ by $n$, and, again by \eqref{logpn}, that
		\beas
	\sum_{k=1}^\infty \frac{\log^2(p_k)}{(p_k-1)^2} \le C \sum_{k=1}^\infty\frac{ \log^2(k)}{(k-1)^2}<\infty,
	\enas
	where we have used the first relation there to upper bound $\log (p_k)$ by $ C \log (k)$ for some positive constant in the numerator, and the second one again to lower bound $p_k$ by $k$ in the denominator. As the final sum in \eqref{eq:see.final.sum} does not depend on $\phi(\cdot)$, the bound is uniform over all $\phi \in {\rm Lip}_{1/2}$.

	The proof of the remainder of the lemma closely follows Lemma 5 of \cite{AMPS}. Using \eqref{Mertens}, we obtain
		\bea\label{sum'}
		\sum_{k=1}^{j} \frac{\log (p_k)}{p_k-1}=\log (p_j) + \sum_{k=1}^{j} \frac{\log (p_k)}{(p_k-1)p_k}+R_j=\log (p_{j}) + O(1),
		\ena
		where in the second sum we have used both relations in \eqref{logpn} to obtain $\frac{\log (p_k)}{p_k(p_k-1)}=O\left(\frac{1}{k^2 \log k}\right)$. Thus, using \eqref{sum'}, that $p_n>n$ via \eqref{logpn}, and recalling $\mu_n$ in \eqref{eq:mun.geom}, we obtain
		\bea\label{mu'}
		\mu_n-1=\frac{1}{\log (p_n)}\left(\sum_{k=1}^n \frac{\log (p_k)}{p_k-1}-\log (p_n)\right)=O\left(\frac{1}{\log n}\right).
		\ena
		
		To prove the last claim, we sketch the coupling construction of $(U, I)$ in Lemma 5 of \cite{AMPS}, with $I$ a function of the uniform $U\sim {\cal U}[0,1]$, itself independent of $X_1,\ldots,X_n$. For $j=0,1,\ldots,n$, set 
		\beas
		F_j=\sum_{k=1}^j P(I=k)=\frac{1}{\mu_n \log (p_n)} \sum_{k=1}^j \frac{\log(p_k)}{(p_k-1)}, 
		\enas
		and define the random variable $I$ by
		\beas
		I=j \qmq{if} F_{j-1} \le U < F_j.
		\enas 
		Clearly $I$ is independent of $X_1,\ldots,X_n$, since it only depends on $U$.
		When $I=j$, using $|u-c|$ is a convex function of $u$ for any constant $c$ for the equality, deterministically we have
		\bea\label{detbd} 
		\Bvert U- \frac{\log(p_I)}{\log(p_n)}\Bvert \le \sup_{u \in [F_{j-1}, F_j)} \Bvert u-\frac{\log(p_j)}{\log (p_n)} \Bvert = \max \Big\{\Bvert F_{j-1}-\frac{\log(p_j)}{\log (p_n)} \Bvert,\Bvert F_{j}-\frac{\log(p_j)}{\log (p_n)} \Bvert \Big\}.
		\ena
		Now, using \eqref{logpn}, \eqref{sum'} and \eqref{mu'}, with \eqref{mu'} implying that $\mu_n \to 1$ as $n \to \infty$, 
		we have
		\begin{multline*}
		F_{j}-\frac{\log(p_j)}{\log (p_n)} = \frac{1}{\log (p_n)}\left(\sum_{k=1}^j \frac{\log (p_k)}{p_k-1}-\log (p_j)\right) - \frac{1}{\log p_n}(1-\mu_n^{-1}) \sum_{k=1}^j \frac{\log (p_k)}{p_k-1}\\
		= O\left(\frac{1}{\log n}\right) - \frac{\mu_n -1}{\mu_n \log p_n} (\log p_j + O(1))= O\left(\frac{1}{\log n}\right).
		\end{multline*}
		Also, using \eqref{logpn} and again that $\mu_n \rightarrow 1$, we have
		\bea \label{eq:PIeqj}
		P(I=j)= F_j-F_{j-1} =\frac{1}{\mu_n \log (p_n)} \frac{\log(p_j)}{(p_j-1)} = O\left(\frac{1}{j \log n}\right).
		\ena
		Thus, by subtracting and adding $F_j$, we obtain
		\beas
		F_{j-1}-\frac{\log(p_j)}{\log (p_n)}= O\left(\frac{1}{j \log n}\right) + O\left(\frac{1}{\log n}\right) = O\left(\frac{1}{ \log n}\right),
		\enas
		and hence, on the event $I=j$, from \eqref{detbd} we have
		\bea \label{eq:sup.interval.bd}
	\Bvert U- \frac{\log(p_I)}{\log(p_n)}\Bvert = O\left(\frac{1}{ \log n}\right).
		\ena
		Now, using \eqref{eq:PIeqj} and \eqref{eq:sup.interval.bd} we obtain
		\begin{multline*}
		E\Bvert U - \frac{\log(p_I)}{\log(p_n)} \Bvert = \sum_{j=1}^n P(I=j) E\left[\Bvert U - \frac{\log(p_I)}{\log(p_n)} \Bvert \Bvert I=j\right]\\
		= O\left(\sum_{j=1}^n \frac{1}{j \log n} \frac{1}{ \log n} \right) = O\left(\frac{1}{\log n}\right),
		\end{multline*}
		thus proving the final claim.
\end{proof}

\noindent {\em Proof of Theorem \ref{thm:num1}:} The upper bound follows directly from Theorem \ref{thm:numgen} upon invoking Lemma \ref{lem:geom}. Next we show that the order of the bound is optimal. From \eqref{mu'} and \eqref{sum'}, we have
	\beas
	\mu_n-1=\frac{1}{\log (p_n)}\left(\sum_{k=1}^{n} \frac{\log (p_k)}{(p_k-1)p_k}+R_n\right),
	\enas
 and by the second display in \eqref{Mertens} we obtain
 \beas
 \lim_{n \to \infty} \sum_{k=1}^{n} \frac{\log (p_k)}{(p_k-1)p_k}+R_n=-\gamma.
 \enas
As $\log p_n=O(\log n)$ by \eqref{logpn}, $|\mu_n-1|$ is at least of order $1/\log n$. Since the function $h(x)=x$ is in ${\cal H}_{1,1}$, by \eqref{def:d111} we have that $d_{1,1}(S_n,D) \ge |E h(S_n) - E h(D)|=|\mu_n-1|$. Hence $d_{1,1}(S_n,D)$ is at least of order $1/\log n$.\qed

For our next example, for $n \ge 1$ let $\Omega'_n$ denote the set of square-free integers whose largest prime factor is less than or equal to $p_n$ and let $\Pi'_n$ denote the distribution on $\Omega'_n$ with mass function
\beas
\Pi'_n (m)=\frac{1}{\pi'_nm}  \qmq{for $m \in \Omega'_n$}
\enas 
where $\pi'_n=\sum_{m \in \Omega'_n} 1/m$ is the normalizing factor. We again consider $S_n$ as in \eqref{eq:Sn.primes}, here for $M_n=\prod_{k=1}^n p_k^{X_k}$ where $X_k \sim {\rm Ber}(1/(1+p_k))$ are independent for $1 \le k \le n$. One can check, see e.g. \cite{CS13}, that $M_n \sim \Pi'_n$.
Following \cite{AMPS}, let
\bea\label{eq:mun.bern}
\mu_n=E[S_n]= \frac{1}{\log(p_n)} \sum_{k=1}^n \frac{\log(p_k)}{1+p_k}.
\ena
The following lemma combines Lemmas 3 and 5 of \cite{AMPS}. By following tightly the same lines of argument in \cite{AMPS} the bounds we obtain in \eqref{mun} and \eqref{Tn} are $O(1/\log n)$ whereas \cite{AMPS} claims only the order $O(\log \log n/\log n)$.
\begin{lemma}\label{lem:swan}
	Let $S_n$ be as in \eqref{eq:Sn.primes} with $X_1,\ldots,X_n$ independent with $X_k \sim {\rm Ber}(1/(1+p_k))$. With $\mu_n$ as given in \eqref{eq:mun.bern}, let the random variable $I$ take values in $\{1, \dots, n\}$ with mass function
	\beas
	P(I=k)=\frac{\log(p_k)}{(1+p_k) \log(p_n)\mu_n} \qmq{for $k \in \{1, \dots, n\}$,}
	\enas 
	and be independent of $X_1, \dots, X_n$. For
	\bea \label{eq:Tn.Bern}
	T_n=\frac{\log(p_I)}{\log(p_n)}-\frac{X_I \log(p_I)}{\log(p_n)},
	\ena
	we have
	\bea\label{SBBern} 
	E[S_n \phi (S_n)]=\mu_n E[\phi(S_n+T_n)] \qmq{
		for all $\phi \in  {\rm Lip}_{1/2}$.}
 	\ena
	Moreover, 
	\bea\label{mun}
	\mu_n-1=O\left(\frac{1}{\log n}\right) \qmq{and} E\Bvert \frac{X_I \log(p_I)}{\log(p_n)} \Bvert = O\left(\frac{1}{\log ^2 n}\right),
	\ena
	and there exists a coupling between a random variable $U \sim {\cal U}[0,1]$ and $I$ with $U$ independent of $S_n$ such that
	\bea\label{Tn}
	E\Bvert U - \frac{\log(p_I)}{\log(p_n)} \Bvert =O\left(\frac{1}{\log n}\right).
	\ena
\end{lemma}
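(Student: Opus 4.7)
The plan is to follow closely the structure of the proof of Lemma \ref{lem:geom}, replacing the Stein identity for geometrics with the corresponding one for Bernoullis. The key simplification in the Bernoulli case is that $E[X g(X)] = p\, g(1)$ holds exactly without a difference term, so no remainder $R_{n,\phi}$ appears in \eqref{SBBern}. Setting $S_n^{(k)} = S_n - X_k \log(p_k)/\log(p_n)$ and conditioning on $(X_j)_{j \neq k}$, I would derive
\begin{align*}
E[S_n \phi(S_n)] = \frac{1}{\log(p_n)} \sum_{k=1}^n \frac{\log(p_k)}{1+p_k}\, E\!\left[\phi\!\left(S_n + (1-X_k)\tfrac{\log(p_k)}{\log(p_n)}\right)\right],
\end{align*}
then factor out $\mu_n$ to recognize the weights as $P(I=k)$, and use independence of $I$ from $(X_k)$ to collapse the sum to $\mu_n E[\phi(S_n + T_n)]$ for $T_n$ as in \eqref{eq:Tn.Bern}.

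For the asymptotic estimates in \eqref{mun}, I would decompose $1/(1+p_k) = 1/p_k - 1/(p_k(1+p_k))$ and apply the sharpened Mertens theorem \eqref{Mertens}: the first piece contributes $\log p_n + R_n = \log p_n + O(1)$, while the bound $\log(p_k)/(p_k(1+p_k)) = O(1/(k^2 \log k))$ coming from \eqref{logpn} shows the second piece is summable. Dividing by $\log p_n$ and using $\log p_n = \Theta(\log n)$ then yields $\mu_n - 1 = O(1/\log n)$. For the second estimate, independence of $I$ from $(X_k)$ together with $E[X_k] = 1/(1+p_k)$ gives
\begin{align*}
E\!\left|\frac{X_I \log(p_I)}{\log(p_n)}\right| = \frac{1}{\mu_n \log^2(p_n)} \sum_{k=1}^n \frac{\log^2(p_k)}{(1+p_k)^2},
\end{align*}
and the sum converges by the same PNT estimates, using $\log^2(p_k)/(1+p_k)^2 = O(\log^2 k / k^2)$.

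The coupling \eqref{Tn} would be constructed exactly as in Lemma \ref{lem:geom} via the inverse CDF of $I$: set $F_j = \sum_{k \le j} P(I=k)$, take $U \sim {\cal U}[0,1]$ independent of $(X_k)$, and define $I = j$ on $\{F_{j-1} \le U < F_j\}$. On that event, convexity of $u \mapsto |u - c|$ bounds $|U - \log(p_I)/\log(p_n)|$ by the maximum of $|F_{j-1} - \log(p_j)/\log(p_n)|$ and $|F_j - \log(p_j)/\log(p_n)|$. The main obstacle is verifying the uniform-in-$j$ bound $|F_j - \log(p_j)/\log(p_n)| = O(1/\log n)$; this requires the Bernoulli analogue of \eqref{sum'}, namely $\sum_{k \le j} \log(p_k)/(1+p_k) = \log p_j + O(1)$, which follows by the same Mertens-based decomposition used above for $\mu_n$. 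Once this is in hand, combining it with $P(I=j) = O(1/(j \log n))$ and summing a harmonic series yields the claimed $O(1/\log n)$ rate, which is the sharpening over the $O(\log \log n / \log n)$ bound of \cite{AMPS} that this lemma delivers.
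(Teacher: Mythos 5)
Your proposal is correct and follows essentially the same route as the paper, which itself defers the size-bias identity \eqref{SBBern} to Lemma 3 of \cite{AMPS} and obtains the estimates \eqref{mun} and \eqref{Tn} by repeating the Mertens-based computations and the inverse-CDF coupling of Lemma \ref{lem:geom} with $p_k-1$ replaced by $p_k+1$. The details you supply (the Bernoulli identity $E[Xg(X)]=p\,g(1)$ yielding the exact identity with no remainder, the decomposition $1/(1+p_k)=1/p_k-1/(p_k(1+p_k))$, and the uniform bound $|F_j-\log(p_j)/\log(p_n)|=O(1/\log n)$ before summing against $P(I=j)=O(1/(j\log n))$) are exactly the omitted computations.
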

\begin{proof}
	The proof of \eqref{SBBern} is exactly same as in Lemma 3 of \cite{AMPS} and one can follow the lines of argument in \cite{AMPS} to prove the second claim in \eqref{mun}. The proofs of the other two claims are similar to those of the corresponding results in Lemma \ref{lem:geom} noting that the orders in the bounds do not change if we replace $p_k-1$ by $p_k+1$; we omit the computation.
	\end{proof}
\noindent {\em Proof of Theorem \ref{thm:num}:} The upper bound follows directly from Theorem \ref{thm:numgen} upon invoking Lemma \ref{lem:swan} with $R_\phi=0$ for all $\phi \in {\rm Lip}_{1/2}$ and noting that with $T_n$ and $U$ as in \eqref{eq:Tn.Bern} and \eqref{Tn} respectively,
	\beas
	E|T_n - U| \le E\Bvert \frac{X_I \log(p_I)}{\log(p_n)} \Bvert + E \Bvert U- \frac{\log(p_I)}{\log(p_n)}\Bvert = O\left(\frac{1}{\log n}\right),
	\enas
	using \eqref{mun} and \eqref{Tn} on these two terms, respectively. Finally, that the upper bound is of optimal order follows as in the proof of Theorem \ref{thm:num1}.
\qed

We also prove that these types of convergence results hold for $S_n$ given in \eqref{eq:Sn.primes} when $X_k \sim {\rm Poi}(\lambda_k)$, $k \ge 1$ for certain sequences of positive real numbers $(\lambda_k)_{k \ge 1}$. Here we take $\mu_n$ equal to the mean of $S_n$,
\bea\label{poi} 
\mu_n= \frac{1}{\log(p_n)} \sum_{k=1}^n \lambda_k \log(p_k) \qmq{and} 
P(I=k)=\frac{\lambda_k \log(p_k)}{ \log(p_n)\mu_n} \qmq{for $k \in \{1, \dots, n\}$,} 
\ena
with $I$ independent of $S_n$. Under this framework, we have the following construction of a variable having the size bias distribution of $S_n$.

\begin{lemma}\label{lem:2.3}
	For a sequence of positive real numbers $(\lambda_k)_{1 \le k \le n}$ and independent random variables $X_1,\ldots,X_n$ with $X_k \sim {\rm Poi}(\lambda_k)$, let
	\beas
	S_n=\frac{1}{\log(p_n)} \sum_{k=1}^n X_k \log(p_k).
	\enas
	For $\mu_{n}$ as in \eqref{poi} and $T_n=\log(p_I)/\log(p_n)$, where $I$ is distributed as in \eqref{poi} and is independent of $S_n$,
	we have
	\beas
	E[S_n \phi (S_n)]=\mu_n E[\phi(S_n+T_n)] \qm{for all $\phi \in {\rm Lip}_{1/2}$.}
	\enas
\end{lemma}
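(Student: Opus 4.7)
The plan is to mimic the geometric-case computation of Lemma \ref{lem:geom}, using the Poisson Stein identity \eqref{eq:Stein.for.Poisson} in place of the geometric identity \eqref{geomstid}. Because the Poisson identity has no ``$-g(X)$'' term, the computation is cleaner and no remainder $R_\phi$ arises, just as in the weighted Poisson case in the proof of Theorem \ref{thm:Poisson}.

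First I would expand the left-hand side by writing $S_n = \frac{1}{\log(p_n)}\sum_{k=1}^n X_k \log(p_k)$ and exchanging sum and expectation:
\beas
E[S_n \phi(S_n)] = \frac{1}{\log(p_n)} \sum_{k=1}^n \log(p_k)\, E[X_k \phi(S_n)].
\enas
Setting $S_n^{(k)} = S_n - X_k \log(p_k)/\log(p_n)$, which is independent of $X_k$, and conditioning on $S_n^{(k)}$, the inner expectation becomes $E[X_k\, g_k(X_k)]$ where $g_k(x) = \phi(S_n^{(k)} + x\log(p_k)/\log(p_n))$. Since $X_k \sim \mathcal{P}(\lambda_k)$, applying the Poisson Stein identity \eqref{eq:Stein.for.Poisson} gives $E[X_k g_k(X_k) \mid S_n^{(k)}] = \lambda_k E[g_k(X_k+1) \mid S_n^{(k)}]$, and adding $1$ to $X_k$ increases the argument of $\phi$ by exactly $\log(p_k)/\log(p_n)$, i.e.\ produces $\phi(S_n + \log(p_k)/\log(p_n))$.

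Combining these steps,
\beas
E[S_n \phi(S_n)] = \frac{1}{\log(p_n)} \sum_{k=1}^n \lambda_k \log(p_k)\, E\!\left[\phi\!\left(S_n + \frac{\log(p_k)}{\log(p_n)}\right)\right].
\enas
Factoring out $\mu_n$ from \eqref{poi} and recognizing the probability mass function of $I$ from the same display, the sum rewrites as $\mu_n \sum_{k=1}^n P(I=k) E[\phi(S_n + \log(p_k)/\log(p_n))]$. Using that $I$ is independent of $S_n$ and the definition $T_n = \log(p_I)/\log(p_n)$, this equals $\mu_n E[\phi(S_n + T_n)]$, completing the proof.

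Since this is essentially a one-line application of the Poisson Stein identity followed by rewriting as an $I$-expectation, there is no real obstacle; the computation is a routine analogue of the geometric case in Lemma \ref{lem:geom}, simpler because no correction term appears. The Lipschitz hypothesis $\phi \in \mathrm{Lip}_{1/2}$ is only needed to ensure integrability of the relevant quantities (automatic under finiteness of $\mu_n$), so no care beyond finiteness of the involved sums is required.
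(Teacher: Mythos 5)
Your proposal is correct and matches the paper's own argument essentially step for step: both expand $E[S_n\phi(S_n)]$ over the summands, apply the Poisson Stein identity \eqref{eq:Stein.for.Poisson} conditionally on $S_n^{(k)}=S_n-X_k\log(p_k)/\log(p_n)$, and then recognize the mass function of $I$ from \eqref{poi} together with its independence from $S_n$ to conclude $\mu_n E[\phi(S_n+T_n)]$. No discrepancies worth noting.
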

\begin{proof}
	Using \eqref{eq:Stein.for.Poisson} in the second equality, for $S_n^{(k)}=S_n - X_k \log(p_k)/\log(p_n)$,
	\begin{multline*}
	E[S_n \phi (S_n)]=\frac{1}{\log(p_n)} \sum_{k=1}^n \log(p_k) E[X_k \phi(S_n^{(k)}+X_k \log(p_k)/\log(p_n)]\\=\frac{1}{\log(p_n)} \sum_{k=1}^n \log(p_k) \lambda_k E[ \phi(S_n^{(k)}+(X_k+1) \log(p_k)/\log(p_n)]\\=\frac{1}{\log(p_n)} \sum_{k=1}^n \log(p_k) \lambda_k E[ \phi(S_n+ \log(p_k)/\log(p_n)]\\
	=\mu_n \sum_{k=1}^n P(I=k) E[ \phi(S_n+ \log(p_k)/\log(p_n)]=\mu_n E[\phi(S_n+T_n)]
	\end{multline*}
	where in the last step, we have used that $I$ is independent of $S_n$.
\end{proof}

We now present two applications of Lemma \ref{lem:2.3} with notation and assumptions as there.
\begin{example}\label{ex:Poisson1}
	Let $\lambda_k=1/(1+p_k)$. As the mean of the $X_k$ variables are the same here as in Lemma \ref{lem:swan}, $\mu_n$ and the distribution of $I$ also correspond. Taking $U \sim {\cal U}[0,1]$ independent of $S_n$, and coupling $I$ and $U$ similarly as in Lemma \ref{lem:swan}, we have that
    \beas
    |\mu_n-1|=O\left(\frac{1}{\log n}\right) \qmq{and}  E\Bvert U - \frac{\log(p_I)}{\log(p_n)} \Bvert =O\left(\frac{1}{\log n}\right).
    \enas
    Now, by Theorem \ref{thm:numgen} and Lemma \ref{lem:2.3} we obtain
	\beas
	d_{1,1}(S_n, D) \le \frac{C}{\log n}
	\enas
	for some universal constant $C$. One may show that the order of this bound is optimal by arguing as in the proof of Theorem \ref{thm:num1}.
\end{example}

\begin{example}\label{ex:Poisson2}
	Let $p_0=1$ and and $\lambda_k=1 - \log(p_{k-1})/\log(p_k)$ for $k \ge 1$.  Then clearly $\mu_n=1$. Now to obtain a coupling $(T_n,U)$, we take $U \sim {\cal U}[0,1]$ independent of $S_n$, and define
    \beas
	I = k \qm{if \quad $\frac{\log(p_{k-1})}{\log(p_n)} \le U <\frac{\log(p_{k})}{\log(p_n)}$ for $1\le k \le n$}.
	\enas 
	Then by construction we have
	\beas
	P(I=k)=\frac{\lambda_k \log(p_k)}{ \log(p_n)\mu_n} \qmq{for $1 \le k \le n$.}
	\enas 
	Conditioning on $I$, we have
	\beas
	E|T_n-U| =\sum_{k=1}^n P(I=k) E \left(\Big| \frac{\log(p_{k})}{\log(p_n)}-U \Big| \Bvert I=k \right)
	\le \sum_{k=1}^n P(I=k) \Bvert \frac{\log(p_{k-1})}{\log(p_n)}-\frac{\log(p_k)}{\log(p_n)} \Bvert.
	\enas
	Now using that $p_k/p_{k-1} \le 2$ by Bertrand's postulate (see e.g. \cite{Ra19}) for all $k \ge 1$, we obtain
	\beas
	E|T_n-U| \le \frac{\log(2)}{\log(p_n)}.
	\enas
	Hence from Theorem \ref{thm:numgen} with $\mu_n=1$ and $R_\phi=0$ for all $\phi \in {\rm Lip}_{1/2}$, we have
	\beas
	d_{1,1}(S_n, D) \le \frac{\log (2)}{2 \log (p_n)} \le \frac{C}{\log n}
	\enas
	for some universal constant $C$.
\end{example}
Following the distribution of a draft of this manuscript, \cite{Arr17} pointed out that the approach in \cite{Arr02} may be used to obtain bounds in the Wasserstein-1 metric for some results in this section.

\section{Perpetuities and the ${\cal D}_{\theta,s}$ family, simulations and distributional bounds}\label{sec:perp}

In this section we develop the extension of the generalized Dickman distribution to the ${\cal D}_{\theta,s}$ family for $\theta>0$ and a function $s:[0,\infty) \rightarrow [0,\infty)$. As detailed in the Introduction, the recursion 
\eqref{eq:sim.dickman.theta} associated with the ${\cal D}_\theta$ family can be interpreted as giving the successive values of a Vervaat perpetuity under the assumption that the utility function is the identity. More generally, with
utility function $s(\cdot)$, one obtains the recursion
\bea \label{eq:tVervaat.insec}
s(W_{n+1})=U_n^{1/\theta}s(W_n+1) \qm{for $n \ge 0$,}
\ena
where $U_n, n \ge 0$ are independent and have the ${\cal U}[0,1]$ distribution, $U_n$ is independent of $W_n$, and $W_0$ has some given initial distribution. In Section \ref{existence.D}, under Condition \ref{cond:s} below on $s(\cdot)$, we prove Theorem \ref{existence} that shows that the distributional fixed points ${\cal D}_{\theta,s}$ of \eqref{eq:tVervaat.insec} exist and are unique. When $s(\cdot)$ is invertible, as it is under Condition \ref{cond:s} below, we may write \eqref{eq:tVervaat.insec} as
\bea \label{eq:Wn.t.rec}
W_{n+1}=s^{-1}\left( U_n^{1/\theta}s(W_n+1)\right) \qm{for $n \ge 0$.}
\ena

In Section \ref{bounds}, we provide distributional bounds for approximation of the ${\cal D}_{\theta,s}$ distribution. Using direct coupling, Corollary \ref{cor:dist.sW.sD} gives a bound on how well the utility $s(W_n)$ in \eqref{eq:tVervaat.insec} approximates the utility of its limit $D_{\theta,s}$. Next, Theorem \ref{bound} extends the main Wasserstein bound \eqref{eq:bound17} of \cite{Go17v2} to
\bea \label{eq:Wn.t.rec*}
d_1(W,D_{\theta,s}) \le (1-\rho)^{-1} d_1(W^*,W)
\qmq{where} W^*=_d s^{-1}\left( U^{1/\theta}s(W+1)\right)
\ena
for $U \sim {\cal U}[0,1]$, independent of $W$. The constant $\rho$ is defined in \eqref{rholess1} as a uniform bound on an integral involving $(\theta,s)$ given by \eqref{I:def}. However, \cite{Bax17} shows that this quantity can be interpreted in terms of the Markov chain \eqref{eq:Wn.t.rec} and its properties connected to those of its transition operator $(Ph)(x)=E\left[h\left( s^{-1}\left( U^{1/\theta}s(x+1)\right)\right)\right]$ in this, and some more general, cases. In particular, for $h \in {\rm Lip_1}$, $\rho$ is a bound on the essential supremum norm of the derivative of the transition operator. Though linear stochastic recursions are ubiquitous and are well known to be highly tractable, this special class of Markov chains, despite its non-linear transitions, seems also amenable to deeper analysis.

We apply the inequality \eqref{eq:Wn.t.rec*} in Corollary \ref{othertwo} to obtain a bound on the Wasserstein distance between the iterates $W_n$ of \eqref{eq:Wn.t.rec} and $D_{\theta,s}$. Finally in Section \ref{subsec:dickman.examples}, we give a few examples of some new distributions that arise as a result of utility functions that appear in the economics literature.

\subsection{Existence and uniqueness of ${\cal D}_{\theta,s}$ distribution}\label{existence.D}
In the following we use the terms increasing and decreasing in the non-strict sense. Let $\le_{\rm st}$ denote inequality between random variables in the stochastic order.

 \begin{lemma} \label{lem:sto.order.pres}
 	Let $\theta>0$ and $s:[0,\infty) \rightarrow [0,\infty)$ satisfy
 	\bea\label{Subadd}
 	s(x+1) \le s(x)+1 \qmq{for all $x \ge 0$,}
 	\ena
    let $W_0$ be a given non-negative random variable and let $\{W_n, n \ge 1\}$ be generated by recursion \eqref{eq:tVervaat.insec}.
 	Then 
  	\bea \label{eq:sw.sub.add}
 	s(W_{n+1})  \le U_n^{1/\theta} (s(W_n)+1)\qm{for all $n \ge 0$.}
 	\ena
 If in addition $s(W_0) \le_{st} D_\theta$, then
 	 \bea \label{eq:sWn.le.D}
 	 s(W_n) \le_{\rm st} D_\theta  
 	 \qm{for all $n \ge 0$.}
 	 \ena
 \end{lemma}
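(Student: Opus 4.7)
The argument splits cleanly into the two displayed inequalities. For \eqref{eq:sw.sub.add}, I would simply apply the subadditivity hypothesis \eqref{Subadd} at the point $x=W_n$, giving $s(W_n+1) \le s(W_n)+1$ almost surely, and multiply through by the non-negative factor $U_n^{1/\theta}$. Combined with the defining recursion $s(W_{n+1})=U_n^{1/\theta} s(W_n+1)$ from \eqref{eq:tVervaat.insec}, this yields the pathwise bound $s(W_{n+1}) \le U_n^{1/\theta}(s(W_n)+1)$ directly.

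For \eqref{eq:sWn.le.D} I would proceed by induction on $n$, the base case $n=0$ being the hypothesis $s(W_0) \le_{\rm st} D_\theta$. For the inductive step, assuming $s(W_n) \le_{\rm st} D_\theta$, I would invoke Strassen's representation theorem to realize, on an auxiliary probability space, a coupled pair $(\widehat{X},\widehat{D})$ with $\widehat{X} =_d s(W_n)$, $\widehat{D} =_d D_\theta$, and $\widehat{X} \le \widehat{D}$ almost surely, and on that same space an independent $\widehat{U} \sim \mathcal{U}[0,1]$. Monotonicity then gives $\widehat{U}^{1/\theta}(\widehat{X}+1) \le \widehat{U}^{1/\theta}(\widehat{D}+1)$ almost surely.

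The crucial observation is that $\widehat{U}^{1/\theta}(\widehat{D}+1) =_d D_\theta$ by the characterizing fixed point property of $\mathcal{D}_\theta$ under the transformation \eqref{def:W*}, while $\widehat{U}^{1/\theta}(\widehat{X}+1) =_d U_n^{1/\theta}(s(W_n)+1)$ since both are products of independent pairs with matching marginals (here using that $U_n$ is independent of $W_n$ by construction). This yields $U_n^{1/\theta}(s(W_n)+1) \le_{\rm st} D_\theta$, and combining with \eqref{eq:sw.sub.add} by transitivity of the stochastic order closes the induction, giving $s(W_{n+1}) \le_{\rm st} D_\theta$.

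The main obstacle is essentially bookkeeping: one must be careful to preserve the appropriate independence between $U_n$ and $s(W_n)$ throughout the Strassen coupling so that the fixed point identity for $D_\theta$ can legitimately be invoked on the new space. Once this is handled, the argument reduces to chaining the subadditivity hypothesis, Strassen's representation, and the characterizing property of $\mathcal{D}_\theta$, none of which requires additional regularity beyond what is assumed.
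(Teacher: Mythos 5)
Your proposal is correct and follows essentially the same route as the paper: the pathwise application of \eqref{Subadd} together with the recursion gives \eqref{eq:sw.sub.add}, and then induction combined with the fixed-point property $U^{1/\theta}(D_\theta+1)=_d D_\theta$ gives \eqref{eq:sWn.le.D}. The only difference is presentational: the paper states the stochastic-order step $U_n^{1/\theta}(s(W_n)+1)\le_{\rm st}U_n^{1/\theta}(D_\theta+1)$ directly, whereas you justify it explicitly via a Strassen coupling, which is exactly the standard argument underlying that step.
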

 
 \begin{proof}
By applying \eqref{eq:tVervaat.insec} and \eqref{Subadd} for the equality and inequality respectively, we have
 	\beas
s(W_{n+1}) = U_n^{1/\theta} s(W_n+1) \le U_n^{1/\theta} (s(W_n)+1) ,
\enas
hence the claim \eqref{eq:sw.sub.add} holds, and when $s(W_n) \le_{\rm st} D_\theta$ then
\beas
U_n^{1/\theta} (s(W_n)+1) \le_{\rm st} U_n^{1/\theta} (D_\theta+1) =_d D_\theta,
\enas
where for the final equality we have used that $D_\theta$ is fixed by the Dickman bias transformation \eqref{def:W*}, and taken $U_n$ independent of $D_\theta$. Induction then shows that the claim holds for all $n \ge 0$ when \eqref{eq:sWn.le.D} is true for $n=0$.
 \end{proof}
Theorem \ref{existence}, showing the existence and uniqueness of the fixed point ${\cal D}_{\theta,s}$ to \eqref{bias}, requires the following condition to hold on the utility function $s(\cdot)$.
\begin{condition} \label{cond:s}
	The function $s:[0,\infty) \rightarrow [0,\infty)$ is continuous, strictly increasing with $s(0)=0$ and $s(1)=1$, and satisfies
	\bea\label{Subadditivity}
	s(x+1) \le s(x)+1 \qmq{for all $x \ge 0$}
	\ena
	and
	\bea\label{Concavity}
	|s(x+1)-s(y+1)| \le |s(x)-s(y)| \qmq{for all $x,y \ge 0$.}
	\ena
\end{condition}

The following result shows that choice of the starting distribution in \eqref{eq:tVervaat.insec} has vanishing effect asymptotically as measured in the $d_1$ Wasserstein norm. 

\begin{lemma} \label{lem:was.s(W)s(V)}
Let $\theta>0$ and Condition \ref{cond:s} be in force. Let $W_0$ and $V_0$ be given non-negative random variables such that the means of $s(W_0)$ and $s(V_0)$ are finite. For $n \ge 1$ let  $s(V_n)$ and $s(W_n)$ have distributions as specified in \eqref{eq:tVervaat.insec}. 
 Then $s(W_n)$ and $s(V_n)$ have finite mean for all $n \ge 0$, and 
\bea \label{eq:WnZnWass}
d_1(s(W_n),s(V_n)) \le \left(\frac{\theta}{\theta+1}\right)^nd_1(s(W_0),s(V_0)) \qm{for all $n \ge 0$.}
\ena  
\end{lemma}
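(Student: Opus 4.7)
The plan is to prove \eqref{eq:WnZnWass} by induction on $n$, using coupling and the contractive nature of the recursion measured through $s(\cdot)$. The base case $n=0$ is trivial, so the work is in the inductive step. For finiteness of means, I would first note that by Lemma \ref{lem:sto.order.pres} (specifically \eqref{eq:sw.sub.add}, which uses \eqref{Subadditivity}), $s(W_{n+1}) \le U_n^{1/\theta}(s(W_n)+1)$ almost surely, so taking expectation and using $E[U_n^{1/\theta}] = \theta/(\theta+1)$ yields $E[s(W_{n+1})] \le \frac{\theta}{\theta+1}(E[s(W_n)]+1)$, which is finite by induction from the hypothesis $E[s(W_0)]<\infty$; the analogous statement holds for $V_n$.

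For the Wasserstein bound, fix $n \ge 0$ and assume the inequality at step $n$. Since $s(W_n)$ and $s(V_n)$ are real-valued, by \eqref{def:was.inf} the infimum defining $d_1(s(W_n),s(V_n))$ is attained by some coupling $(X_n,Y_n)$ with $X_n =_d s(W_n)$ and $Y_n =_d s(V_n)$. Because $s(\cdot)$ is continuous and strictly increasing on $[0,\infty)$ by Condition \ref{cond:s}, it is invertible on its range, so defining $\widetilde{W}_n = s^{-1}(X_n)$ and $\widetilde{V}_n = s^{-1}(Y_n)$ gives a coupling of $W_n$ and $V_n$. Introduce on an enlarged probability space a variable $U \sim \mathcal{U}[0,1]$ independent of $(X_n,Y_n)$, and advance both chains using the same $U$ via \eqref{eq:Wn.t.rec}, namely
\[
s(\widetilde{W}_{n+1}) = U^{1/\theta} s(\widetilde{W}_n + 1), \qquad s(\widetilde{V}_{n+1}) = U^{1/\theta} s(\widetilde{V}_n + 1).
\]
This produces a valid coupling of $s(W_{n+1})$ and $s(V_{n+1})$.

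Now subtract and take absolute values:
\[
|s(\widetilde{W}_{n+1}) - s(\widetilde{V}_{n+1})| = U^{1/\theta}\bigl|s(\widetilde{W}_n+1) - s(\widetilde{V}_n+1)\bigr| \le U^{1/\theta} |X_n - Y_n|,
\]
where the inequality invokes the contractivity \eqref{Concavity} of Condition \ref{cond:s}. Taking expectations and using independence of $U$ from $(X_n,Y_n)$, together with $E[U^{1/\theta}] = \theta/(\theta+1)$, yields
\[
E|s(\widetilde{W}_{n+1}) - s(\widetilde{V}_{n+1})| \le \frac{\theta}{\theta+1}\, E|X_n - Y_n| = \frac{\theta}{\theta+1}\, d_1(s(W_n), s(V_n)).
\]
Since $d_1(s(W_{n+1}),s(V_{n+1}))$ is bounded above by the left-hand side (as any valid coupling supplies an upper bound via \eqref{def:was.inf}), combining with the inductive hypothesis gives the claim at $n+1$ and completes the induction.

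The mildly delicate point I would want to be careful about is the coupling construction: we are given an optimal coupling of $s(W_n)$ and $s(V_n)$ rather than of $W_n$ and $V_n$ themselves, but invertibility of $s(\cdot)$ (guaranteed by Condition \ref{cond:s}) lets us lift it to a coupling of the underlying variables and then drive both forward with a shared $U_n$; the independence of $U_n$ from $(W_n,V_n)$ in the joint construction is what allows the factorization $E[U^{1/\theta}|\cdot|] = E[U^{1/\theta}]\,E|\cdot|$ above. If the infimum were not attained, the same argument goes through with an $\varepsilon$-optimal coupling and then letting $\varepsilon \downarrow 0$.
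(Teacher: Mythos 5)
Your proof is correct and follows essentially the same route as the paper: induction on $n$, an optimal coupling of $(s(W_n),s(V_n))$ attained via \eqref{def:was.inf}, a shared independent uniform driving both chains forward, the contraction \eqref{Concavity} of Condition \ref{cond:s}, and $E[U^{1/\theta}]=\theta/(\theta+1)$, with finiteness of means handled through \eqref{eq:sw.sub.add} of Lemma \ref{lem:sto.order.pres}. Your explicit lifting of the coupling through $s^{-1}$ merely spells out a step the paper leaves implicit.
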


\begin{proof}
By \eqref{eq:sw.sub.add} of Lemma \ref{lem:sto.order.pres},
the existence of $E[s(W_n)]$ implies the existence of $E[s(W_{n+1})]$. Now induction and the assumption that $E[s(W_n)]$ is finite for $n=0$ proves the expectation is finite for all $n \ge 0$.

The claim \eqref{eq:WnZnWass} holds trivially for $n=0$. Assuming it holds for some $n \ge 0$, let the joint distribution of $(s(V_n),s(W_n))$ achieve the infimum in \eqref{def:was.inf}. Then independently constructing $U_n \sim {\cal U}[0,1]$ on the same space as $s(V_n)$ and $s(W_n)$, the pair $s(W_{n+1}),s(V_{n+1})$ given by \eqref{eq:tVervaat.insec} are defined on the same space and have the desired marginals, and satisfy
\beas
|s(V_{n+1})-s(W_{n+1})| \le U_n^{1/\theta}|s(V_n)-s(W_n)|.
\enas
Hence, by the independence of $s(W_n)$ and $s(V_n)$ from $U_n$ and definition \eqref{def:was.inf} of the $d_1$ metric, we obtain
\beas
d_1(s(W_{n+1}),s(V_{n+1})) \le E[U_n^{1/\theta}|s(V_n)-s(W_n)|] = \frac{\theta}{\theta+1}E|s(V_n)-s(W_n)|,
\enas
and applying the induction hypotheses, we obtain \eqref{eq:WnZnWass}. 
\end{proof}

Define the generalized inverse of an increasing function $s:[0,\infty) \rightarrow [0,\infty)$ as 
\bea\label{geninv}
s^{-}(x)=\inf\{y : s(y) \ge x\}
\ena
with the convention that $\inf \emptyset= \infty$. In particular for $X$ a random variable, we consider $s^{-}(X)$ as a random variable taking values in the extended real line. When writing the stochastic order relation $V \le_{\rm st} W$ between two extended valued random variables, we mean that $P(V \ge t) \le P(W \ge t)$ holds for all $t$ in the extended real line. Note that $s^{-}(\cdot)$ and $s^{-1}(\cdot)$ coincide on the range of $s(\cdot)$ when $s(\cdot)$ is continuous and strictly increasing.

\begin{theorem}\label{existence}
Let $\theta>0$ and $s(\cdot)$ satisfy Condition \ref{cond:s}.  
Then there exists a unique distribution ${\cal D}_{\theta,s}$ for a random variable $D_{\theta,s}$ such that $s(D_{\theta,s})$ has finite mean and satisfies $D_{\theta,s}=_d D_{\theta,s}^*$, with $D_{\theta,s}^*$ given by \eqref{bias}. In addition, $D_{\theta,s} \le_{\rm st} s^{-}(D_\theta)$.
\end{theorem}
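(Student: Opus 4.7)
My plan is to construct $D_{\theta,s}$ as the $d_1$-limit of the iterates $\{s(W_n)\}$ starting from $W_0 = 0$, then verify the fixed-point equation, uniqueness, and the stochastic bound using the contraction established in Lemma \ref{lem:was.s(W)s(V)}. With $W_0=0$ we have $s(W_0)=0 \le_{\rm st} D_\theta$ and $E[s(W_0)]=0$, so Lemma \ref{lem:sto.order.pres} gives $s(W_n)\le_{\rm st} D_\theta$ for all $n$ and hence $E[s(W_n)]\le E[D_\theta]=\theta$, where the value $\theta$ follows by taking expectations in $D_\theta =_d U^{1/\theta}(D_\theta+1)$. Applying Lemma \ref{lem:was.s(W)s(V)} to $\{W_n\}$ and to the shifted sequence $V_n:=W_{n+k}$ (which also satisfies \eqref{eq:tVervaat.insec}) yields
\[
d_1\bigl(s(W_n),s(W_{n+k})\bigr) \le \left(\frac{\theta}{\theta+1}\right)^n d_1\bigl(s(W_0),s(W_k)\bigr) \le \left(\frac{\theta}{\theta+1}\right)^n \theta.
\]
Thus $\{s(W_n)\}$ is $d_1$-Cauchy, and since $d_1$ is complete on the space of Borel probability measures on $[0,\infty)$ with finite first moment, the sequence has a limit law, which by strict monotonicity and continuity of $s$ is the distribution of $s(D_{\theta,s})$ for a unique non-negative random variable $D_{\theta,s}$.

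To verify $D_{\theta,s}=_d D_{\theta,s}^*$, I would pass to the limit in $s(W_{n+1}) =_d U^{1/\theta}s(W_n+1)$ with $U\sim {\cal U}[0,1]$ independent of $W_n$. Coupling $s(W_n)$ and $s(D_{\theta,s})$ optimally for $d_1$, using independence of $U$ and the concavity-type inequality \eqref{Concavity} in Condition \ref{cond:s} to control $|s(W_n+1)-s(D_{\theta,s}+1)| \le |s(W_n)-s(D_{\theta,s})|$, one gets
\[
d_1\bigl(U^{1/\theta}s(W_n+1),\,U^{1/\theta}s(D_{\theta,s}+1)\bigr) \le \frac{\theta}{\theta+1}\,E\bigl|s(W_n)-s(D_{\theta,s})\bigr| \to 0.
\]
Since $s(W_{n+1})\to s(D_{\theta,s})$ in $d_1$ as well, uniqueness of limits forces $s(D_{\theta,s})=_d U^{1/\theta}s(D_{\theta,s}+1)$, which by invertibility of $s$ is equivalent to the fixed-point relation $D_{\theta,s}=_d D_{\theta,s}^*$ from \eqref{bias}.

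For uniqueness, if $D'$ is any other fixed point with $E[s(D')]<\infty$, starting the recursion from $V_0=D'$ gives $s(V_n)=_d s(D')$ for every $n$, and Lemma \ref{lem:was.s(W)s(V)} applied to $\{W_n\}$ and $\{V_n\}$ yields
\[
d_1\bigl(s(W_n),s(D')\bigr) \le \left(\frac{\theta}{\theta+1}\right)^n E[s(D')] \to 0.
\]
Combined with $s(W_n)\to s(D_{\theta,s})$, this forces $s(D_{\theta,s})=_d s(D')$, and then $D_{\theta,s}=_d D'$ by strict monotonicity of $s$. Finally, for the stochastic bound, $d_1$-convergence implies weak convergence, which preserves $\le_{\rm st}$ on $[0,\infty)$, so $s(W_n)\le_{\rm st} D_\theta$ for every $n$ passes in the limit to $s(D_{\theta,s})\le_{\rm st} D_\theta$. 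The generalized inverse $s^{-}$ in \eqref{geninv} is monotone and satisfies $y\le s^{-}(x)$ whenever $s(y)\le x$, so applying it gives $D_{\theta,s}\le_{\rm st} s^{-}(D_\theta)$.

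The only delicate step is the continuity argument for the fixed-point verification; the subadditivity estimate \eqref{Concavity} in Condition \ref{cond:s} is exactly what makes the distributional map associated with \eqref{bias}, at the level of the laws of $s(W)$, a strict contraction in $d_1$, and everything else is a straightforward Banach-style fixed-point argument powered by Lemma \ref{lem:was.s(W)s(V)} and Lemma \ref{lem:sto.order.pres}.
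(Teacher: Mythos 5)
Your proposal is correct, but it reaches existence by a genuinely different route than the paper. You prove that the full sequence $\{{\cal L}(s(W_n))\}$ is Cauchy in $d_1$ — applying the contraction of Lemma \ref{lem:was.s(W)s(V)} to the chain and its $k$-step shift, and bounding $d_1(s(W_0),s(W_k))=E[s(W_k)]\le E[D_\theta]=\theta$ via Lemma \ref{lem:sto.order.pres} — and then invoke completeness of the Wasserstein-$1$ space to obtain a limit of the whole sequence; the fixed-point identity then follows by passing to the limit on both sides of $s(W_{n+1})=_d U_n^{1/\theta}s(W_n+1)$, using an optimal coupling pushed through $s^{-1}$ together with \eqref{Concavity} (which you call subadditivity; that name belongs to \eqref{Subadditivity}). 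The paper instead stays entirely within weak-convergence tools: $s(W_n)\le_{\rm st} D_\theta$ gives tightness, a subsequence $s(W_{n_k})$ converges weakly, and an auxiliary chain started at $V_0=_d W_1$ combined with Lemma \ref{lem:was.s(W)s(V)} shows $W_{n_k+1}$ has the same weak limit as $W_{n_k}$, so one can pass to the limit in $W_{n_k+1}=_d W_{n_k}^*$ using continuity of $s$ and $s^{-1}$. Your route avoids the subsequence extraction and the auxiliary shifted chain and yields convergence of the whole sequence in $d_1$, at the price of importing completeness of the Wasserstein space (standard, but external to the paper) and of essentially re-deriving the one-step contraction inside the limit-passage step; the paper's route needs only tightness and preservation of stochastic order and distributional identities under weak limits. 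Uniqueness and the bound $D_{\theta,s}\le_{\rm st}s^{-}(D_\theta)$ are handled essentially identically in both arguments, and the one soft spot you share with the paper — writing the limit law of $s(W_n)$ as ${\cal L}(s(D_{\theta,s}))$, which implicitly places the limit in the range of $s$ — is treated at the same level of rigor in both, so it is not a gap relative to the paper.
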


\begin{proof}
Generate a sequence $W_n, n \ge 0$ as in \eqref{eq:Wn.t.rec} with initial value $W_0=0$. 
We first prove that a distributional fixed point to the transformation \eqref{bias} exists by showing the existence of a distribution ${\cal D}_{\theta,s}$ and a subsequence $(n_k)_{k \ge 0}$ such that
\bea \label{eq:Wnnsubk}
W_{n_k} \rightarrow_d {\cal D}_{\theta,s} \qmq{and}  W_{n_k+1} \rightarrow _d {\cal D}_{\theta,s} \qmq{as $k \to \infty$ and} W_{n_k+1}=_d W_{n_k}^*.
\ena

By Lemma \ref{lem:sto.order.pres} and the fact that $s(W_0)=s(0)=0$, we have $s(W_n) \le_{\rm st} D_\theta$ for all $n \ge 0$. 
As $0 \le s(W_n) \le_{\rm st} D_\theta$, the sequence $s(W_n), n \ge 0$ is tight and therefore has a convergent subsequence $s(W_{n_k}) \rightarrow_d {\cal E}_{\theta,s}$ for some distribution ${\cal E}_{\theta,s}$. As $s(\cdot)$ is invertible $W_{n_k} \rightarrow_d {\cal D}_{\theta,s}$ where  ${\cal D}_{\theta,s}=_ds^{-1}({\cal E}_{\theta,s})$ proving first claim in \eqref{eq:Wnnsubk}. As weak limits preserve stochastic order, ${\cal E}_{\theta,s} \le_{\rm st} {\cal D}_\theta$ and hence ${\cal D}_{\theta,s} \le_{\rm st} s^{-}({\cal D}_\theta)$, as $s(\cdot)$ increasing implies that $s^{-}(\cdot)$ given by \eqref{geninv} is also increasing. The last claim of the theorem is shown.                                                                

Let the sequence $V_n, n \ge 0$ be generated as $W_n$ is in \eqref{eq:Wn.t.rec} with initial value $V_0=_d W_1$ and $V_0$ independent of $U_n, n  \ge 0$. Note that $s(V_0)$ has finite mean by Lemma \ref{lem:was.s(W)s(V)}, and hence \eqref{eq:WnZnWass} may be invoked to conclude that $d_1(s(W_n),s(V_n)) \rightarrow 0$ as $n \rightarrow \infty$. As $s(W_{n_k}) \rightarrow_d {\cal E}_{\theta,s}$, we have $s(V_{n_k}) \rightarrow_d {\cal E}_{\theta,s}$ hence $V_{n_k} \rightarrow_d {\cal D}_{\theta,s}$. As $V_0=_d W_1$, we have $V_n=_d W_{n+1}$, implying $W_{n_k+1} \rightarrow_d {\cal D}_{\theta,s}$. The second claim in \eqref{eq:Wnnsubk} is shown. The third claim holds by \eqref{eq:Wn.t.rec} and by definition \eqref{eq:Wn.t.rec*} of the Dickman bias transform.

By the first claim in \eqref{eq:Wnnsubk} and the continuity of $s(\cdot)$ and $s^{-1}(\cdot)$, letting $U \sim {\cal U}[0,1]$ be independent of $D_{\theta,s} \sim {\cal D}_{\theta, s}$ and $W_{n_k}$, as $k \rightarrow \infty$ we have
\beas
W_{n_k}^*=_ds^{-1}\left(U^{1/\theta}s(W_{n_k}+1) \right) \rightarrow_d s^{-1}\left(U^{1/\theta}s(D_{\theta,s}+1) \right)=_d D_{\theta,s}^*.
\enas
Hence, letting $k \rightarrow \infty$ in the third relation \eqref{eq:Wnnsubk} we obtain $D_{\theta,s}=D_{\theta,s}^*$, showing that ${\cal D}_{\theta,s}$  is a fixed point of the Dickman bias transformation \eqref{eq:Wn.t.rec*}.

Now let $W_0$ and $V_0$ be any two fixed points of the transformation such that $s(W_0)$ and $s(V_0)$ have finite mean. Then the distributions of $s(W_n)$ and $s(V_n)$ do not depend on $n$, and \eqref{eq:WnZnWass} yields
\beas
d_1(s(W_0),s(V_0))=d_1(s(W_n),s(V_n)) \rightarrow 0 \qm{as $n \rightarrow \infty$.}
\enas
Hence $s(W_0)=_d s(V_0)$, and applying $s^{-1}$ we conclude $W_0=_dV_0$; the fixed point is unique.
\end{proof}

\subsection{Distributional bounds for ${\cal D}_{\theta,s}$  approximation and Simulations}\label{bounds}

In this section we study the accuracy of recursive methods to approximately sample from the ${\cal D}_{\theta,s}$ family, starting with the following simple corollary to Lemma \ref{lem:was.s(W)s(V)} that gives a bound on how well the utility $s(W_n)$, satisfying the recursion \eqref{eq:tVervaat.insec}, approximates the long term utility of the fixed point.

\begin{corollary} \label{cor:dist.sW.sD}
	Let $\theta>0$ and  Condition \ref{cond:s} be in force. Then $s(W_n)$ given by \eqref{eq:tVervaat.insec} satisfies
	\beas
	d_1(s(W_n),s(D_{\theta,s})) \le \left(\frac{\theta}{\theta+1}\right)^n d_1(s(W_0),s(D_{\theta,s})) \qm{for all $n \ge 0$.}
	\enas
\end{corollary}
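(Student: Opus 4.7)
The plan is to derive this corollary as a direct specialization of Lemma \ref{lem:was.s(W)s(V)}, by choosing the second initial value to be (distributed as) the fixed point $D_{\theta,s}$ itself. Concretely, I would take $V_0 =_d D_{\theta,s}$ with $V_0$ independent of $\{U_n : n \ge 0\}$, and let the sequence $V_n$, $n \ge 1$, be generated by the recursion \eqref{eq:tVervaat.insec} using the same driving variables $U_n$ as $W_n$.

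The first thing to verify is that the hypotheses of Lemma \ref{lem:was.s(W)s(V)} hold. Theorem \ref{existence} guarantees that the distribution ${\cal D}_{\theta,s}$ exists, is unique, and that $s(D_{\theta,s})$ has finite mean; thus $E[s(V_0)]<\infty$. If one additionally assumes $E[s(W_0)]<\infty$ (which is needed for $d_1(s(W_0),s(D_{\theta,s}))$ to be finite and the bound to be meaningful), the lemma applies and yields
\[
d_1(s(W_n),s(V_n)) \le \left(\frac{\theta}{\theta+1}\right)^n d_1(s(W_0),s(V_0)).
\]

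The key observation is then that $V_n =_d D_{\theta,s}$ for every $n \ge 0$. Indeed, by Theorem \ref{existence}, $D_{\theta,s}$ satisfies $D_{\theta,s} =_d D_{\theta,s}^*$ with $D_{\theta,s}^*$ given by \eqref{bias}, which is precisely the distributional action of one step of the recursion \eqref{eq:Wn.t.rec}. Hence, starting the recursion at $V_0 =_d D_{\theta,s}$, an immediate induction on $n$ shows that $V_n =_d D_{\theta,s}$, and consequently $s(V_n) =_d s(D_{\theta,s})$, for all $n \ge 0$. Substituting this into both sides of the inequality above gives the claimed bound.

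There is essentially no obstacle here beyond bookkeeping: the contraction was already extracted in Lemma \ref{lem:was.s(W)s(V)}, and existence of the fixed point together with integrability of $s(D_{\theta,s})$ was established in Theorem \ref{existence}, so the only real content is the observation that the recursion preserves the distribution ${\cal D}_{\theta,s}$. The mildest subtlety is matching the random variable $s(D_{\theta,s})$ appearing in the claim with the $s(V_n)$ produced by the lemma, which is handled by the independence of $V_0$ from the $U_n$'s and the equality in distribution $s(V_n)=_d s(D_{\theta,s})$ (noting that $d_1$ depends only on the marginal distributions).
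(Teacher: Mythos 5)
Your proposal is correct and follows essentially the same route as the paper: specialize Lemma \ref{lem:was.s(W)s(V)} with $V_0 =_d D_{\theta,s}$ and use that $D_{\theta,s}$ is fixed by the transformation \eqref{eq:Wn.t.rec*}, so that $s(V_n) =_d s(D_{\theta,s})$ for all $n$. Your added remarks on verifying integrability via Theorem \ref{existence} are fine but not a different argument.
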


\begin{proof}
	The result follows from \eqref{eq:WnZnWass} of Lemma \ref{lem:was.s(W)s(V)} by taking $V_0=_d D_{\theta,s}$ and noting that $D_{\theta,s}$ is fixed by the transformation \eqref{eq:Wn.t.rec*} so that $s(V_n)=_d s(D_{\theta,s})$ for all $n$.
\end{proof}

Corollary \ref{cor:dist.sW.sD} depends on the direct coupling in Lemma \ref{lem:was.s(W)s(V)}, which constructs the variables $s(W_n)$ and $s(V_n)$ on the same space. Theorem \ref{bound} below gives a bound for when a non-negative random variable $W$ is used to approximate the distribution of $D_{ \theta,s}$. Though direct coupling can still be used to obtain bounds such as those in Theorem \ref{bound} for the ${\cal D}_\theta$ family, doing so is no longer possible for the more general ${\cal D}_{\theta,s}$ family as iterates of \eqref{eq:Wn.t.rec} can no longer be written explicitly when $s(\cdot)$ is non-linear. Theorem \ref{bound} below provides a Wasserstein bound between $D_{\theta,s}$ and $W$ assuming certain natural conditions on the function $s(\cdot)$.

For $\theta>0$, suppressed in the notation, and $x>0$ such that $s'(x)$ exists, let
\bea\label{I:def}
I(x)=\frac{\theta s'(x)}{s^{\theta+1}(x)} \int_0^x s^\theta(v) dv.
\ena

For $S \subset [0, \infty)$, we say a function $f:[0, \infty) \to [0, \infty)$ is locally absolutely continuous on $S$ if it is absolutely continuous when restricted to any compact sub-interval of $S$. Unless otherwise stated, locally absolutely continuity will mean over the domain of $f(\cdot)$.

\begin{theorem}\label{bound} Let $\theta>0$ and $s:[0,\infty) \rightarrow [0,\infty)$  satisfying Condition \ref{cond:s} be locally absolutely continuous on $[0,\infty)$ and such that $E[D_{\theta,s}] <\infty$. With $I(\cdot)$ as in \eqref{I:def}, if there exists $\rho \in [0,1)$ such that
\bea\label{rholess1}
\|I\|_\infty  \le \rho,
	\ena
then for any non-negative random variable $W$ with finite mean,
\bea \label{eq:thm2.2bd}
d_1(W,D_{\theta,s}) \le (1-\rho)^{-1} d_1(W^*,W).
\ena
In the special case $s(x)=x$, $\|I\|_\infty  = \theta/(\theta+1) \in [0,1)$, and one may take $\rho$ equal to this value.
\end{theorem}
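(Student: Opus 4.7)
The plan is to carry out Stein's method with the generalized averaging operator \eqref{eq:averaging.tx}, where $t(x)=s^\theta(x)$. The starting point is the identity
\[
E[g(W^*)\mid W]=A_{W+1}g,
\]
which I would verify by a short change-of-variables computation: conditionally on $W$, the variable $s^\theta(W^*)=Us^\theta(W+1)$ is uniform on $[0,s^\theta(W+1)]$, and substituting $u=s^{-1}(v^{1/\theta})$ in the resulting integral produces $(1/t(W+1))\int_0^{W+1}g(u)t'(u)du$. In particular, since $D_{\theta,s}$ is fixed by the Dickman bias transformation \eqref{bias} by Theorem \ref{existence}, it is characterized by $E[g(D_{\theta,s})]=E[A_{D_{\theta,s}+1}g]$. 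This motivates using the Stein equation \eqref{eq:Stein.smoothed.form},
\[
g(x)-A_{x+1}g=h(x)-E[h(D_{\theta,s})],
\]
with the generalized operator $A_{x+1}$.

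The second step is to appeal to the smoothness bound of Theorem \ref{Lipschitzg}: for $h\in{\rm Lip}_1$, the solution $g$ of the above Stein equation satisfies $\|g'\|_\infty \le (1-\rho)^{-1}$ whenever $\|I\|_\infty \le \rho<1$. Taking this for granted, substitute $W$ into the Stein equation, take expectations, and use the first-step identity to obtain
\[
E[h(W)]-E[h(D_{\theta,s})]=E[g(W)-A_{W+1}g]=E[g(W)-g(W^*)].
\]
The Lipschitz bound on $g$ then gives, for any joint construction of $(W,W^*)$,
\[
|E[h(W)]-E[h(D_{\theta,s})]|\le (1-\rho)^{-1}E|W-W^*|.
\]
Taking the infimum over couplings, using the representation \eqref{def:was.inf} to recognize the result as $(1-\rho)^{-1}d_1(W,W^*)$, and then the supremum over $h\in{\rm Lip}_1$ yields \eqref{eq:thm2.2bd}. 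For the special case $s(x)=x$, a direct computation with $t(x)=x^\theta$ gives
\[
I(x)=\frac{\theta}{x^{\theta+1}}\int_0^x v^\theta dv = \frac{\theta}{\theta+1},
\]
independent of $x$, so $\|I\|_\infty=\theta/(\theta+1)\in[0,1)$, and the constant $(1-\rho)^{-1}$ becomes $\theta+1$, recovering the original bound \eqref{eq:bound17} of \cite{Go17v2}.

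The main obstacle is the Lipschitz bound on the Stein solution $g$ supplied by Theorem \ref{Lipschitzg}; granted that, the rest of the argument is essentially formal. Producing that bound is where the quantity $\|I\|_\infty$ enters: differentiating the Stein equation expresses $g'$ in terms of an integral transform whose effective norm is governed precisely by $I(x)$, and the hypothesis $\|I\|_\infty \le \rho<1$ converts this into a contractive problem whose Neumann-type resolvent produces the factor $(1-\rho)^{-1}$. The assumed local absolute continuity of $s(\cdot)$ is what permits the differentiation step, while Condition \ref{cond:s} ensures that the averaging operator $A_{x+1}$ maps suitably integrable functions to Lipschitz ones and that the fixed-point argument closes on the whole half-line $[0,\infty)$.
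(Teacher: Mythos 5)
Your proposal is correct and follows essentially the same route as the paper, whose (omitted) proof is the argument of Theorem 1.3 of \cite{Go17v1}: the bias identity $E[g(W^*)]=E[A_{W+1}g]$ from Lemma \ref{biasrel}, the ${\rm Lip}_{1/(1-\rho)}$ solution of the Stein equation \eqref{steineq} supplied by Theorem \ref{Lipschitzg}, and the coupling representation \eqref{def:was.inf} of $d_1$. Your direct computation $I(x)\equiv\theta/(\theta+1)$ for $s(x)=x$ also matches the paper's final claim, which it attributes to Theorem \ref{concaverho}.
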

\begin{remark}
Note that $E[s^{-1}(D_\theta)] < \infty$ implies $E[D_{\theta,s}]<\infty$ as $D_{\theta,s} \le_{\rm st} s^{-1}(D_\theta)$ by Theorem \ref{existence}.
\end{remark}

\begin{remark}
By a simple argument, similar to the one in Section 3 of \cite{Go17v2}, for $\theta>0$ and $s:[0,\infty) \rightarrow [0,\infty)$ satisfying Condition \ref{cond:s}, \eqref{ConvexBd} below and $E[D_{\theta,s}] <\infty$, for any non-negative random variable $W$ with finite mean, we have
		\beas
		d_1(W,D_{\theta,s}) \le (1+\theta) d_1(W^*,W)
		\enas
		so that \eqref{eq:thm2.2bd} holds with $\rho=\theta/(\theta+1)$. 
		
		 The use of Stein's method in Theorem \ref{bound} does not require that $s(\cdot)$ satisfy \eqref{ConvexBd} but does need $s(\cdot)$ to be locally absolutely continuous. In addition, the alternative approach in \cite{Go17v2} has no scope for improvement in terms of finding the best constant $\rho$; Example \ref{ex:liminf} presents a case where taking $\rho=\theta/(\theta+1)$ is not optimal. Theorem \ref{concaverho} below gives a verifiable criteria by which one can show when the canonical choice $\rho=\theta/(\theta+1)$ is not improvable.
\end{remark}

We will prove Theorem \ref{bound} using Stein's method in Section \ref{Smoothness}. Here, we provide the following corollary applicable for the simulation of ${\cal D}_{\theta,s}$ distributed random variables. Note that when $s(\cdot)$ is strictly increasing and continuous, for $W$ independent of $U \sim {\cal U}[0,1]$ the transform $W^*$ as given by \eqref{bias} satisfies
\bea\label{biassotcbd}
W^*=_d s^{-1} (U^{1/\theta}s(W+1)) \le W+1.
\ena

\begin{corollary}\label{othertwo}
	Let $s:[0,\infty) \rightarrow [0,\infty)$ be as in Theorem \ref{bound} and let $\{W_n, n\ge 1\}$ be generated by \eqref{eq:Wn.t.rec} with $W_0$ non-negative and $EW_0<\infty$, independent of $\{U_n, n \ge 0\}$. If $\rho\in [0,1)$ exists satisfying \eqref{rholess1}, then
	\bea\label{DistrApp}
	d_1(W_n, D_{\theta,s}) \le (1-\rho)^{-1} d_1(W_{n+1},W_n).
	\ena  
		Moreover, if $s(\cdot)$ satisfies
		\bea\label{ConvexBd}
		| s^{-1}(as(x)) -s^{-1}(as(y)) | \le a|x-y| \qmq{ for $a \in [0,1]$ and $x,y \ge 1$,}
		\ena
		then
	\bea\label{forremark}
	d_1(W_n,D_{\theta,s}) \le (1-\rho)^{-1} \left(\frac{\theta}{\theta+1}\right)^n d_1(W_1,W_0).
	\ena
	When $W_0=0$,
	\bea\label{GenSimBd}
	d_1(W_n, D_{\theta,s})
	\le(1-\rho)^{-1} \left(\frac{\theta}{\theta +1}\right)^n E[s^{-1}(U^{1/\theta})],
	\ena
	and in the particular the case of the generalized Dickman ${\cal D}_\theta$ family,
	\bea \label{not.GenSimBd}
	d_1(W_n,D_\theta) \le \theta \left(\frac{\theta}{\theta+1}\right)^{n}.
	\ena
\end{corollary}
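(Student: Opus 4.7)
The plan is to obtain the four assertions in sequence, each building on the previous one. For \eqref{DistrApp}, I would apply Theorem \ref{bound} directly to $W=W_n$. The key observation is that the Dickman bias transform $W_n^*=_d s^{-1}(U^{1/\theta}s(W_n+1))$ with $U \sim {\cal U}[0,1]$ independent of $W_n$ has precisely the distribution of $W_{n+1}$ as specified by the recursion \eqref{eq:Wn.t.rec}, because $U_n$ there is independent of $W_n$ and uniform. Thus $d_1(W_n^*, W_n) = d_1(W_{n+1}, W_n)$, and \eqref{DistrApp} is immediate from \eqref{eq:thm2.2bd}.

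For \eqref{forremark}, I would iterate a contraction in a suitable coupling. Given $\varepsilon>0$, let $(\widetilde W_0, \widetilde W_1)$ be a coupling of $W_0$ and $W_1$ on a common probability space achieving $E|\widetilde W_1 - \widetilde W_0| \le d_1(W_1,W_0)+\varepsilon$; this is possible by \eqref{def:was.inf}. Independently of this pair, take i.i.d.\ uniforms $U_0,U_1,\ldots$ and define two chains $\{V_k^{(1)}\}$ and $\{V_k^{(2)}\}$ via the recursion \eqref{eq:Wn.t.rec} starting from $\widetilde W_0$ and $\widetilde W_1$ respectively, driven by the same uniforms. By the Markov property, $V_n^{(1)} =_d W_n$ and $V_n^{(2)} =_d W_{n+1}$. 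Now at each step
\begin{equation*}
|V_{k+1}^{(1)} - V_{k+1}^{(2)}| = \left|s^{-1}\!\left(U_k^{1/\theta} s(V_k^{(1)}+1)\right) - s^{-1}\!\left(U_k^{1/\theta} s(V_k^{(2)}+1)\right)\right|,
\end{equation*}
and since $V_k^{(i)}+1 \ge 1$ and $U_k^{1/\theta} \in [0,1]$, hypothesis \eqref{ConvexBd} gives $|V_{k+1}^{(1)} - V_{k+1}^{(2)}| \le U_k^{1/\theta}|V_k^{(1)} - V_k^{(2)}|$. Taking expectations and using independence of $U_k$ from the preceding variables, together with $E[U^{1/\theta}] = \theta/(\theta+1)$, yields the one-step contraction. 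Iterating gives $E|V_n^{(1)} - V_n^{(2)}| \le (\theta/(\theta+1))^n(d_1(W_1,W_0)+\varepsilon)$, and taking $\varepsilon \downarrow 0$ bounds $d_1(W_{n+1},W_n)$. Combining with \eqref{DistrApp} produces \eqref{forremark}.

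For \eqref{GenSimBd}, specializing to $W_0=0$ gives $W_1 = s^{-1}(U_0^{1/\theta} s(1)) = s^{-1}(U_0^{1/\theta})$ using $s(1)=1$ from Condition \ref{cond:s}, so by \eqref{def:was.inf}, $d_1(W_1,W_0) \le E[s^{-1}(U^{1/\theta})]$. Substituting into \eqref{forremark} yields the claim. Finally, \eqref{not.GenSimBd} is a direct numerical specialization to $s(x)=x$: here $E[s^{-1}(U^{1/\theta})] = E[U^{1/\theta}] = \theta/(\theta+1)$, and Theorem \ref{bound} identifies the optimal constant as $\rho = \theta/(\theta+1)$, so $(1-\rho)^{-1} = \theta+1$, and the product $(\theta+1)(\theta/(\theta+1))^n \cdot \theta/(\theta+1) = \theta(\theta/(\theta+1))^n$ gives the stated bound.

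The main obstacle is the contraction step in \eqref{forremark}: the hypothesis \eqref{ConvexBd} must be applied with the right values of $a$ and $x,y$, and one must verify it is legitimate to use the same driving uniforms in both coupled chains so that marginals are preserved while the pathwise inequality contracts. Once the coupling is set up correctly, the remaining assertions reduce to substitutions.
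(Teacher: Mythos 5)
Your proposal is correct and follows essentially the same route as the paper: \eqref{DistrApp} comes from Theorem \ref{bound} together with $W_n^*=_d W_{n+1}$, the geometric decay of $d_1(W_{n+1},W_n)$ comes from a coupling in which \eqref{ConvexBd} yields the pathwise contraction factor $U^{1/\theta}$ with mean $\theta/(\theta+1)$ (the paper refreshes an optimal coupling at each step and inducts on $d_1(W_{n+1},W_n)$, while you couple once at time zero and run both chains on shared uniforms, a cosmetic difference), and the last two bounds are the same specializations. The only omission is that invoking Theorem \ref{bound} for $W=W_n$ requires $EW_n<\infty$, which the paper verifies via \eqref{biassotcbd} and induction giving $W_n\le W_0+n$; this is easily added to your argument.
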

\begin{proof}
Identity \eqref{eq:Wn.t.rec}, the inequality in \eqref{biassotcbd} and induction show that $W_n \le W_0+n$, and hence $EW_n <\infty$, for all $n \ge 0$. Inequality \eqref{DistrApp} now follows from Theorem \ref{bound} noting from \eqref{bias} that $W_n^*=_d W_{n+1}$ for all $n \ge 0$.
	
To show \eqref{forremark}, recalling that the bound \eqref{def:was.inf} is achieved for real valued random variables, for every $n \ge 1$ we may construct $W_{n-1}'$ and $V_n'$ independent of $U_n$ such that $W_{n-1}'=_d W_{n-1},V_n'=_d W_n$ and $E|V_n'-W_{n-1}'|=d_1(W_n,W_{n-1})$. 
	Now letting
	\beas
W_n''=s^{-1}(U_n^{1/\theta} s(W_{n-1}'+1)) \qmq{and} V_{n+1}''=s^{-1}(U_n^{1/\theta} s(V_n'+1))
	\enas 
	we have $W_n''=_d W_n$ and $V_{n+1}''=_d W_{n+1}$. Thus, using \eqref{def:was.inf} followed by \eqref{ConvexBd} we have
\begin{multline*}
d_1(W_{n+1},W_n) \le E|V_{n+1}''-W_n''| \\
= E|s^{-1}(U_n^{1/\theta} s(V_n' +1)) - s^{-1}(U_n^{1/\theta} s(W_{n-1}' +1))|\\
\le E[U_n^{1/\theta}|V_n'-W_{n-1}'|] = \frac{\theta}{\theta+1}d_1(W_n,W_{n-1}).
\end{multline*}
Induction now yields
\beas
d_1(W_{n+1},W_n) 
\le  \left(\frac{\theta}{\theta+1}\right)^n d_1(W_1,W_0)
\enas
and applying \eqref{DistrApp} we obtain \eqref{forremark}.

Inequality \eqref{GenSimBd} now follows from \eqref{forremark} noting in this case, using $s(1)=1$, that $(W_0,W_1)=(0,s^{-1}(U_0^{1/\theta}))$, and \eqref{not.GenSimBd} is now achieved from \eqref{GenSimBd} by taking
$\rho$ to be $\theta/(\theta+1)$, as provided by Theorem \ref{bound} when $s(x)=x$.
\end{proof}

In the remainder of this subsection, in Lemma \ref{lem:concave.suffices} we present some general and easily verifiable conditions on $s(\cdot)$ for the satisfaction of \eqref{ConvexBd}, and in Theorem \ref{concaverho} ones under which the integral bound 
$\|I\|_\infty  \le \rho$ in \eqref{rholess1} holds with $\rho \in [0,1)$. Lastly we show our bounds are equivalent to what can be obtained by a direct coupling method, in the cases where the latter is available.

\begin{condition} \label{cond:strong}
	The function $s:[0,\infty) \rightarrow [0,\infty)$ is continuous at $0$, strictly increasing with $s(0)=0$ and $s(1)=1$, and concave.
\end{condition}

\begin{lemma}\label{abs.cont.}
	If a function $f:[0, \infty) \to [0, \infty)$ is increasing, continuous at $0$ and locally absolutely continuous on $(0,\infty)$, then it is locally absolutely continuous on its domain.
\end{lemma}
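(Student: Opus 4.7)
The plan is to reduce the claim to showing absolute continuity on each closed interval $[0,b]$ with $b>0$, since local absolute continuity on $[0,\infty)$ just means absolute continuity on every such compact subinterval, and the case where the interval is contained in $(0,\infty)$ is given by hypothesis. So fix $b>0$ and work with the standard $\varepsilon$--$\delta$ definition of absolute continuity on $[0,b]$.

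Given $\varepsilon>0$, the first step is to isolate a neighborhood of $0$ using the continuity of $f$ at $0$ together with monotonicity. Specifically, I would choose $a\in(0,b)$ small enough that $f(a)-f(0)<\varepsilon/2$; this is possible because $f$ is continuous at $0$ and increasing, so $f(a)\downarrow f(0)$ as $a\downarrow 0$. The second step is to invoke the hypothesis on $(0,\infty)$: since $f$ is absolutely continuous on the compact subinterval $[a,b]\subset(0,\infty)$, there exists $\delta_{1}>0$ such that whenever $\{(a_i,b_i)\}$ are finitely many disjoint subintervals of $[a,b]$ with total length less than $\delta_1$, we have $\sum_i |f(b_i)-f(a_i)|<\varepsilon/2$.

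Now take $\delta=\delta_{1}$ and consider any finite collection of disjoint subintervals $(a_i,b_i)\subset[0,b]$ with $\sum_i(b_i-a_i)<\delta$. Order them by left endpoint, and if some interval straddles the point $a$, split it at $a$ into two pieces (this only increases the number of intervals, not the total length, and only increases the sum of the relevant $|f(b_i)-f(a_i)|$ by monotonicity-preserving additivity). The resulting collection partitions into a subcollection of disjoint intervals in $[0,a]$ and a subcollection of disjoint intervals in $[a,b]$. For the first subcollection, monotonicity gives
\[
\sum_{i:(a_i,b_i)\subset[0,a]} (f(b_i)-f(a_i)) \;\le\; f(a)-f(0)\;<\;\varepsilon/2,
\]
and for the second subcollection, absolute continuity on $[a,b]$ yields a bound of $\varepsilon/2$. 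Summing gives $\sum_i|f(b_i)-f(a_i)|<\varepsilon$, establishing absolute continuity on $[0,b]$.

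There is no real obstacle here; the only point that requires a moment's care is the splitting argument showing that an arbitrary finite collection of disjoint subintervals of $[0,b]$ decomposes cleanly at the threshold $a$ without increasing the total length, so that the $(0,\infty)$-hypothesis can be applied to one part and the monotonicity/telescoping estimate to the other. The role of continuity at $0$ is exactly to make the monotone telescoping contribution near $0$ small uniformly.
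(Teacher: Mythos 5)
Your proof is correct, but it takes a genuinely different route from the paper. You verify the $\varepsilon$--$\delta$ definition of absolute continuity directly on $[0,b]$: pick $a$ close to $0$ so that $f(a)-f(0)<\varepsilon/2$ (continuity at $0$), control the contribution of any disjoint intervals lying in $[0,a]$ by the telescoping bound $\sum_i\bigl(f(b_i)-f(a_i)\bigr)\le f(a)-f(0)$ (monotonicity), and handle the intervals in $[a,b]$ with the assumed absolute continuity there; the splitting-at-$a$ step you flag is indeed harmless since it preserves total length and, for an increasing $f$, preserves the sum of increments. The paper instead writes, for $0<\epsilon\le x$, $f(x)-f(\epsilon)=\int_\epsilon^x f'(v)\,dv$ by absolute continuity on $[\epsilon,x]$, lets $\epsilon\downarrow 0$ using continuity at $0$ on the left and monotone convergence on the right (here monotonicity enters through $f'\ge 0$ a.e.), and concludes $f(x)-f(0)=\int_0^x f'(v)\,dv$, so that $f$ is an indefinite integral of an integrable function and hence absolutely continuous on $[0,x]$. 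Your argument is more elementary in that it avoids the fundamental theorem of calculus for absolutely continuous functions and the monotone convergence theorem; the paper's argument is shorter and has the side benefit of producing the explicit representation $f(x)-f(0)=\int_0^x f'(v)\,dv$ on all of $[0,x]$, which is the form in which such identities get used later in the paper. Both proofs use monotonicity and continuity at $0$ in essential ways, so neither hypothesis is wasted in either version.
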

\begin{proof}
  Since $f(\cdot)$ is absolutely continuous on any compact subset of $(0, \infty)$, by continuity of $f(\cdot)$ at $0$, for $0<\epsilon \le x <\infty$, using absolute continuity on $[\epsilon,x]$ in the second equality and monotone convergence in the third, we have
\beas
f(x)-f(0)=\lim_{\epsilon \downarrow 0}(f(x)-f(\epsilon))=\lim_{\epsilon \downarrow 0} \int_{\epsilon}^{x}f'(v)dv=\int_{0}^{x}f'(v)dv.
\enas
Hence $f(\cdot)$ is locally absolutely continuous on its domain.
\end{proof}
\begin{lemma} \label{lem:concave.suffices}
	If $s:[0,\infty) \rightarrow [0,\infty)$ satisfies Condition \ref{cond:strong}, then it is locally absolutely continuous on $[0,\infty)$, satisfies Condition \ref{cond:s} and 
	\bea \label{eq:s.inverse.inequality}
	|s^{-1}(as(y)) - s^{-1}(as(x))| \le a |y-x| \qm{for all $x,y \ge 0$ and $a \in [0,1]$.}
	\ena 
\end{lemma}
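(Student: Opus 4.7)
My plan is to verify the three conclusions in order, leveraging the concavity of $s$ throughout. For local absolute continuity, note that a concave function on the open interval $(0,\infty)$ has monotone one-sided derivatives and is therefore locally Lipschitz, hence locally absolutely continuous, on $(0,\infty)$. Combined with the continuity of $s$ at $0$ and the monotonicity from Condition~\ref{cond:strong}, Lemma~\ref{abs.cont.} then yields local absolute continuity on all of $[0,\infty)$.

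Second, I verify that Condition~\ref{cond:s} is satisfied. The pointwise normalizations ($s(0)=0$, $s(1)=1$, strict monotonicity) are immediate from Condition~\ref{cond:strong}, and continuity on $(0,\infty)$ follows from concavity there, which together with continuity at $0$ gives continuity on $[0,\infty)$. Both the subadditivity \eqref{Subadditivity} and the non-expansion \eqref{Concavity} reduce to the standard property that for concave $s$ the chord slope $x\mapsto s(x+1)-s(x)$ is decreasing. Applied at $x$ versus $0$ this gives $s(x+1)-s(x)\le s(1)-s(0)=1$; applied at $x\ge y\ge 0$ it gives $s(x+1)-s(x)\le s(y+1)-s(y)$, which rearranges to $s(x+1)-s(y+1)\le s(x)-s(y)$, both sides non-negative since $s$ is increasing.

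Third, for \eqref{eq:s.inverse.inequality} I pass to $g:=s^{-1}$, which is convex, strictly increasing, continuous, and vanishes at $0$ on its domain $[0,L)$, where $L=\lim_{x\to\infty}s(x)\in(0,\infty]$. Setting $p=s(x)$ and $q=s(y)$ reduces the claim to
\begin{align*}
|g(aq)-g(ap)|\le a\,|g(q)-g(p)| \quad \mbox{for all } p,q\in[0,L) \mbox{ and } a\in[0,1],
\end{align*}
and by symmetry it is enough to take $q\ge p\ge 0$. The case $p=0$ is immediate from convexity: $g(aq)=g(a\cdot q+(1-a)\cdot 0)\le a g(q)+(1-a)g(0)=ag(q)$. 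When $p>0$, the interval $[ap,aq]$ lies in $(0,L)$, on which $g$ is locally absolutely continuous with non-decreasing right derivative $g'_+$; the substitution $u=av$ then yields
\begin{align*}
g(aq)-g(ap)=\int_{ap}^{aq} g'_+(u)\,du = a\int_{p}^{q} g'_+(av)\,dv \le a\int_{p}^{q} g'_+(v)\,dv = a\bigl(g(q)-g(p)\bigr),
\end{align*}
where the inequality uses $av\le v$ and the monotonicity of $g'_+$. Translating back through $p=s(x)$ and $q=s(y)$ gives \eqref{eq:s.inverse.inequality}.

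The main obstacle is this last step, where one must carry out the change of variables for a possibly non-smooth convex $g$ and treat separately the endpoint case $p=0$. Both points are standard consequences of the structural theory of one-sided derivatives of convex functions on an interval, but they are the places demanding genuine attention rather than direct appeal to Condition~\ref{cond:strong}.
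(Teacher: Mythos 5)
Your argument is correct and follows essentially the same route as the paper: both verify Condition \ref{cond:s} by direct concavity estimates (your chord-increment monotonicity is a minor variant of the paper's subadditivity plus integration of the decreasing $s'$), and both establish \eqref{eq:s.inverse.inequality} by passing to the convex inverse and exploiting the monotonicity of its derivative under an integral representation — your substitution $u=av$ is the same mechanism as the paper's chord parametrization $w\mapsto u_1+(v_1-u_1)w$, with your separate treatment of the endpoint case $p=0$ handled in the paper by the uniform secant-slope comparison.
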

\begin{proof}
	First, since $s(\cdot)$ is concave, it is locally absolutely continuous on $(0, \infty)$. Thus, by Lemma \ref{abs.cont.}, $s(\cdot)$ is locally absolutely continuous on its domain. Next we show $s(\cdot)$ is subadditive, that is, that
	\bea \label{eq:s.sub}
	s(x+y) \le s(x)+s(y) \qm{for $x,y \ge 0$.}
	\ena
	Taking $x,y \ge 0$, we may assume both $x$ and $y$ are non-zero as \eqref{eq:s.sub} is trivial otherwise since $s(0)=0$.  By concavity,
	\beas
	\frac{y}{x+y} s(0) + \frac{x}{x+y}s(x+y) \le s(x) \qmq{and}  \frac{x}{x+y} s(0) + \frac{y}{x+y}s(x+y) \le s(y).
	\enas

	Since $s(0) = 0$, adding these two inequalities yield \eqref{eq:s.sub}.
	Taking $y=1$ and using $s(1)=1$ we obtain \eqref{Subadditivity}.  Next, the local absolute continuity and concavity of $s(\cdot)$ on $[0,\infty)$ imply that it is almost everywhere differentiable on this domain, with $s'(\cdot)$ decreasing almost everywhere. Thus for $x \ge y \ge 0$, we have
	\beas
	s(x+1)-s(x)=\int_x^{x+1} s'(u)du \le \int_x^{x+1}s'(u+y-x)du = \int_y^{y+1} s'(u) du=s(y+1)-s(y),
	\enas
	which together with the fact that $s(\cdot)$ is increasing implies \eqref{Concavity}. Hence $s(\cdot)$ satisfies Condition \ref{cond:s}.

	Lastly, we show that $s(\cdot)$ satisfies \eqref{eq:s.inverse.inequality}.
	Since $s(0)=0$ the inequality is trivially satisfied for $a=0$, so fix some $a \in (0,1]$. Again as the result is trivial otherwise, we may take $x \not = y$; without loss, let $0 \le x <y$.  The inverse function  $r(\cdot)=s^{-1}(\cdot)$ is continuous at zero and convex on the range $S$ of $s(\cdot)$, a possibly unbounded convex subset $[0,\infty)$ that 
	includes the origin. Letting $u=s(x)$ and $v=s(y)$, as $s(\cdot)$, and hence $r(\cdot)$, are strictly increasing and $x \not = y$, inequality \eqref{eq:s.inverse.inequality} may be written 
	\bea \label{eq:secants}
	r(av)-r(au) \le a(r(v)-r(u)) \qmq{or equivalently} \frac{r(av)-r(au)}{av-au} \le \frac{r(v)-r(u)}{v-u},
	\ena
	where all arguments of $r(\cdot)$ in \eqref{eq:secants} lie in $S$, it being a convex set containing $\{0,u,v\}$.
	
	The second inequality in \eqref{eq:secants} follows from the following slightly more general one that any convex function $r :[0, \infty)\to [0, \infty)$ which is continuous at $0$  satisfies by virtue of its local absolute continuity and a.e. derivative $r'(\cdot)$ being increasing: if $(u_1,v_1)$ and $(u_2,v_2)$ are such that $u_1 \not = v_1$, $u_1 \le u_2$ and $v_1 \le v_2$, and all these values lie in the range of $r(\cdot)$, then
	\begin{multline*}
	\frac{r(v_1)-r(u_1)}{v_1-u_1} = \frac{1}{v_1-u_1}\int_{u_1}^{v_1} r'(w)dw=
	\int_0^1 r'(u_1+(v_1-u_1)w)dw\\
	\le \int_0^1 r'(u_2+(v_2-u_2)w)dw = \frac{1}{v_2-u_2}\int_{u_2}^{v_2}r'(w)dw = \frac{r(v_2)-r(u_2)}{v_2-u_2},
	\end{multline*}
	as one easily has that $u_1+(v_1-u_1)w\le u_2+(v_2-u_2)w$ for all $w \in [0,1]$.
\end{proof}

When the function $s(\cdot)$ is nice enough, we can actually say more about the constant $\rho$ in \eqref{rholess1} of Theorem \ref{bound}.
\begin{theorem}\label{concaverho}
	Assume that $\theta>0$ and $s:[0,\infty) \rightarrow [0,\infty)$ is concave and continuous at $0$. Then with $I(x)$ as given in \eqref{I:def},
	\bea\label{I:Bound}
     \|I\|_\infty \le \frac{\theta}{\theta +1}.
	\ena
     If moreover $s(\cdot)$ is strictly increasing  with $s(0)=0$ and $\lim_{n \to \infty} s'(x_n)<\infty$ for some sequence of distinct real numbers $x_n \downarrow 0$ in the domain of $s'(\cdot)$, then
	\bea\label{I:BoundSpec}
	 \|I\|_\infty =\frac{\theta}{\theta +1}.
	\ena
	\end{theorem}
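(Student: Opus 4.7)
The plan is to exploit that concavity forces $s'$ to be decreasing on its domain, then bound the integral defining $I(x)$ directly, and separately analyze the small-$x$ regime to obtain the matching lower bound. Since $s$ is concave and continuous at $0$, it is locally absolutely continuous on $[0,\infty)$, $s'$ exists Lebesgue-a.e.\ and is non-increasing, and $s^{\theta+1}$ is locally absolutely continuous with derivative $(\theta+1) s^\theta s'$ a.e.

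For the upper bound \eqref{I:Bound}, applying the fundamental theorem of calculus to $s^{\theta+1}$ yields
\begin{equation*}
s^{\theta+1}(x)-s^{\theta+1}(0)=(\theta+1)\int_0^x s^\theta(v) s'(v)\,dv.
\end{equation*}
Because $s'(v)\ge s'(x)$ for a.e.\ $v\in[0,x]$ and $s^{\theta+1}(0)\ge 0$, the right-hand side is at least $(\theta+1) s'(x)\int_0^x s^\theta(v)\,dv$, which on rearrangement gives $I(x)\le \theta/(\theta+1)$ at a.e.\ $x>0$ with $s(x)>0$; taking essential supremum yields \eqref{I:Bound}.

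For the sharpness claim \eqref{I:BoundSpec}, the extra hypotheses pin down the behavior of $s$ near $0$. As $s'$ is monotone decreasing on its domain, the limit $L_\ast:=\lim_{x\downarrow 0}s'(x)$ exists and agrees with any sequential limit along $x_n\downarrow 0$; hence $L_\ast=L<\infty$. Moreover $L>0$, for otherwise $s'\equiv 0$ and $s$ would be constant, contradicting strict monotonicity together with $s(0)=0$. Given $\epsilon\in(0,L)$, fix $\delta>0$ so that $s'(x)\in[L-\epsilon,L]$ for a.e.\ $x\in(0,\delta)$; integrating from $0$ gives $s(v)\in[(L-\epsilon)v,Lv]$ on $(0,\delta)$. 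Substituting these estimates into $I(x)$---bounding $s'(x)$ and $\int_0^x s^\theta$ from below and $s^{\theta+1}(x)$ from above---produces
\begin{equation*}
I(x)\ge \frac{\theta}{\theta+1}\left(\frac{L-\epsilon}{L}\right)^{\theta+1} \quad \text{for a.e. } x\in(0,\delta).
\end{equation*}
Since this lower bound holds on a set of positive measure, the essential supremum satisfies $\|I\|_\infty\ge(\theta/(\theta+1))((L-\epsilon)/L)^{\theta+1}$; letting $\epsilon\downarrow 0$ and combining with \eqref{I:Bound} yields \eqref{I:BoundSpec}. The main delicate points are verifying $L>0$ so the ratio $(L-\epsilon)/L$ is meaningful, and identifying the single-sequence limit from the hypothesis with the full monotone limit of $s'$; both follow quickly from monotonicity of $s'$ and the non-degeneracy hypotheses on $s$.
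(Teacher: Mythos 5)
Your proof is correct. The upper bound argument is essentially identical to the paper's: both use that concavity plus continuity at $0$ give local absolute continuity with $s'$ non-increasing a.e., then compare $s'(x)\int_0^x s^\theta(v)\,dv$ with $\int_0^x s^\theta(v)s'(v)\,dv=\bigl(s^{\theta+1}(x)-s^{\theta+1}(0)\bigr)/(\theta+1)$. For the sharpness claim \eqref{I:BoundSpec}, however, you take a genuinely different route. The paper computes $\lim_{n\to\infty}I(x_n)$ exactly along the given sequence via a Stolz--Ces{\`a}ro argument, sandwiching $\int_{x_{n+1}}^{x_n}s^\theta\,dv\big/\int_{x_{n+1}}^{x_n}s^\theta s'\,dv$ between $1/s'(x_{n+1})$ and $1/s'(x_n)$, and then concludes $\|I\|_\infty\ge\theta/(\theta+1)$. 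You instead use monotonicity of $s'$ to upgrade the sequential hypothesis to the full limit $L=\lim_{x\downarrow 0}s'(x)\in(0,\infty)$, deduce the two-sided linear bounds $(L-\epsilon)v\le s(v)\le Lv$ on a whole interval $(0,\delta)$, and plug these into \eqref{I:def} to get $I(x)\ge \frac{\theta}{\theta+1}\bigl(\frac{L-\epsilon}{L}\bigr)^{\theta+1}$ a.e.\ there. This is more elementary (no Stolz--Ces{\`a}ro) and has the advantage of producing the lower bound on a set of positive measure, which addresses the essential supremum in \eqref{I:Bound} directly, whereas the paper's conclusion passes from a limit along a Lebesgue-null sequence of points to a statement about $\|I\|_\infty$ and thus implicitly leans on regularity of $I$ near $0$; the paper's route, on the other hand, yields the precise limiting value of $I$ along the prescribed sequence. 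Your auxiliary observations (that $L>0$ follows from strict monotonicity, and that the sequential limit equals the monotone limit of $s'$) are both justified as you indicate.
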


\begin{proof}
	Since $s(\cdot)$ is concave and continuous at $0$, it is locally absolutely continuous with $s'(\cdot)$ decreasing almost everywhere on $[0,\infty)$. Since $u^{\theta+1}$ is Lipschitz on any compact interval, by composition, $s^{\theta+1}(\cdot)$ is absolutely continuous on $[0,x]$ for any $x\ge 0$, and thus  for almost every $x$,
	\begin{multline*}
	\frac{(\theta+1)I(x)}{\theta}= \frac{(\theta+1) s'(x)}{s^{\theta+1}(x)} \int_0^x s^\theta(v) dv
	\le \frac{1}{s^{\theta+1}(x)} \int_0^x (\theta+1) s^{\theta}(v)s'(v) dv\\
	=\frac{s^{\theta+1} (x)-s^{\theta+1} (0)}{s^{\theta+1} (x)} \le 1,
	\end{multline*}
	 proving \eqref{I:Bound}.

	To prove the second claim, first note that $0<\lim_{n \to \infty} s'(x_n)<\infty$, the existence of the limit and second inequality holding by assumption, and the first inequality holding as $s(\cdot)$ is strictly increasing and $s'(\cdot)$ is decreasing almost everywhere.
	
Thus, in the second equality using a version of the Stolz-Ces{\`a}ro theorem \cite{St885} adapted to accommodate $s^{\theta+1}(x_n)$ decreasing to zero,
	\begin{multline*}
	\lim_{n \to \infty} I(x_n)=\theta \lim_{n \to \infty} s'(x_n) \lim_{n \to \infty} \frac {\int_0^{x_n} s^\theta(v)dv}{s^{\theta+1}(x_n)}=\theta \lim_{n \to \infty} s'(x_n) \lim_{n \to \infty} \frac {\int_{x_{n+1}}^{x_n} s^\theta(v)dv}{s^{\theta+1}(x_n)-s^{\theta+1}(x_{n+1})}\\
	=\theta \lim_{n \to \infty} s'(x_n) \lim_{n \to \infty} \frac {\int_{x_{n+1}}^{x_n} s^\theta(v)dv}{(\theta+1)\int_{x_{n+1}}^{x_n} s^\theta(v)s'(v)dv}=\frac{\theta}{\theta+1}\lim_{n \to \infty} s'(x_n)\lim_{n \to \infty} \frac{1}{s'(x_n)}=\frac{\theta}{\theta+1},
	\end{multline*}
	where the penultimate equality follows from the fact that
	\beas
\lim_{n \to \infty} \frac{1}{s'(x_n)}=\lim_{n \to \infty} \frac{1}{s'(x_{n+1})}\le	\lim_{n \to \infty}\frac {\int_{x_{n+1}}^{x_n} s^\theta(v)dv}{\int_{x_{n+1}}^{x_n} s^\theta(v)s'(v)dv} \le \lim_{n \to \infty} \frac{1}{s'(x_n)}
	\enas
	and hence
	\beas
	\|I\|_\infty \ge \frac{\theta}{\theta+1}
	\enas
	which together with \eqref{I:Bound} proves \eqref{I:BoundSpec}.
	\end{proof}

The bound \eqref{not.GenSimBd} of Corollary \ref{othertwo} is obtained by specializing results for the ${\cal D}_{\theta,s}$ family, proven using the tools of Stein's method, to the case where $s(x)=x$. For this special case, letting $V_j=U_j^{1/\theta}$ for $j \ge 0$, the iterates of the recursion \eqref{eq:Wn.t.rec}, starting at $W_0=0$, can be written explicitly as
\beas
W_n = \sum_{k=0}^{n-1} \prod_{j=k}^{n-1}V_j,
\enas
allowing one to obtain bounds using direct coupling. Interestingly, the results obtained by both methods agree, as seen as follows. First, we show
\beas
W_n=_d Y_n \qmq{where} Y_n=\sum_{k=0}^{n-1} \prod_{j=0}^kV_j, \qmq{and} Y_\infty \sim D_\theta \qmq{where} Y_\infty=\sum_{k=0}^\infty \prod_{j=0}^kV_j.
\enas
The first claim is true since for every $n \ge 1$,
\beas
(V_0,\ldots,V_{n-1}) =_d (V_{n-1},\ldots,V_0).
\enas
For the second claim, note that the limit $Y_\infty$ exists almost everywhere and has finite mean by monotone convergence. Now using definition \eqref{def:W*}, with $U_{-1} \sim {\cal U}[0,1]$ independent of $U_0,U_1\ldots$ and setting $V_{-1}=U_{-1}^{1/\theta}$, we have
\begin{multline*}
Y_\infty^* = U_{-1}^{1/\theta}(Y_\infty+1) = V_{-1}\left(\sum_{k=0}^\infty \prod_{j=0}^kV_j+1 \right) \\
= \sum_{k=0}^\infty \prod_{j=-1}^kV_j + V_{-1}
=\sum_{k=-1}^\infty \prod_{j=-1}^kV_j = \sum_{k=0}^\infty \prod_{j=0}^kV_{j-1} = _d \sum_{k=0}^\infty \prod_{j=0}^kV_j=Y_\infty. 
\end{multline*}
Hence $Y_\infty \sim {\cal D}_\theta$.
As $(Y_n,Y_\infty)$ is a coupling of a variable with the $W_n$ distribution to one with the ${\cal D}_\theta$ distribution, by \eqref{def:was.inf} we obtain
\begin{multline*}
	d_1(W_n, D_\theta) = d_1(Y_n, Y_\infty) \le E|Y_\infty-Y_n| = E\left(\sum_{k=n}^\infty \prod_{j=0}^k V_j\right)\\= \sum_{k=n}^\infty \left(\frac{\theta}{\theta+1}\right)^{k+1}= \theta \left(\frac{\theta}{\theta+1}\right)^n,
\end{multline*}
in agreement with \eqref{not.GenSimBd}.

\subsection{Examples}\label{subsec:dickman.examples}
We now consider three new distributions that arise as special cases of the ${\cal D}_{\theta,s}$ family. Expected Utility (EU) theory has long been considered as an acceptable paradigm for decision making under uncertainty by researchers in both economics and finance, see e.g. \cite{ELS05}. To obtain tractable solutions to many problems in economics, one often restricts the EU criterion to a certain class of utility functions, which includes in particular the ones in Examples \ref{ex:s.is.exp} and \ref{ex:s.is.log}. In these two examples we apply the bounds provided in Corollary \ref{othertwo} for the simulation of the limiting distributions these functions give rise to via the recursion \eqref{eq:Wn.t.rec} with say, $W_0=0$. For each example we will verify Condition \ref{cond:strong}, implying Condition \ref{cond:s} by Lemma \ref{lem:concave.suffices}, and hence existence and uniqueness of $D_{\theta,s}$.
\begin{example}\label{ex:s.is.exp}
The exponential utility function $u(x)=1-e^{-\alpha x}$ is the only model, up to linear transformations, exhibiting constant absolute risk aversion (CARA), see \cite{ELS05}. Since utility is unique up to linear transformations, we consider its scaled version
	\beas
	s_{\alpha}(x)=\frac{1-e^{-\alpha x}}{1-e^{-\alpha}} \qmq{for} x \ge 0
	\enas
	characterized by a parameter $\alpha>0$. Clearly $s_\alpha(\cdot)$ is continuous at $0$, strictly increasing with $s_\alpha(0)=0$ and $s_\alpha(1)=1$ and  concave. Since $\lim_{x \downarrow 0}s_\alpha'(x)= \alpha (1-e^{-\alpha})^{-1} \in (0,\infty)$, for all $\theta>0$, by \eqref{I:BoundSpec} of Theorem \ref{concaverho}, one can take $\rho$ to be $\theta/(\theta+1)$ and not strictly smaller, and \eqref{GenSimBd} of Corollary \ref{othertwo} yields
	\beas
	d_1(W_n, D_{\theta,s_\alpha}) \le \theta \left(\frac{\theta}{\theta +1}\right)^{n-1} \qm{for all $n \ge 0$,}
	\enas
	using that $0 \le s_\alpha^{-1}(U^{1/\theta})\le s_\alpha^{-1}(1)=1$ almost surely. 
	
Letting $W_\alpha \sim D_{\theta,s_\alpha}$ it is easy to verify that
\beas
s_\alpha(W_\alpha) =_d U^{1/\theta} s_\alpha(W_\alpha +1) = U^{1/\theta}(1 + e^{-\alpha}s_\alpha(W_\alpha)).
\enas
Using this identity, that Theorem \ref{existence} gives $0 \le s_\alpha(W_\alpha) \le_{\mathrm {st}}
D_\theta$ for all $\alpha>0$, and that $\lim_{\alpha \downarrow 0}s_\alpha(x)=x$ for all $ x \ge 0$ one can show that $W_\alpha$ converges to $D_\theta$ as $\alpha \downarrow 0$. Hence, now setting $s_0(x)=x$, the family of models $D_{\theta,s_\alpha}, \alpha \ge 0$ is parameterized by a tuneable values of $\alpha \ge 0$ whose value may be chosen depending on a desired level of risk aversion, including the canonical $\alpha=0$ case where utility is linear.
\end{example}

\begin{example}\label{ex:liminf} Here we show how standard Vervaat perpetuity models can be seen to assume an implicit concave utility function, and how uncertainty in these utilities can be accommodated using the new families we introduce. Indeed, letting $\theta=1$ in \eqref{eq:tVervaat} and then $s_\theta (x)=x^\theta, \theta\in(0,1]$, it is easy to see that ${\cal D}_{1,s_\theta}={\cal D}_{\theta}$. To model situations where these utilities are themselves subject to uncertainty, we may let $A$ be a random variable supported in $(0,1]$ and consider the mixture $s(x)=E[s_A(x)]$.
	
More formally, for some $0<a \le 1$, let $\mu$ be a probability measure on the interval $(0,a]$, and define
	\beas
	s(x)=\int_0^a s_\alpha(x) d\mu(\alpha).
	\enas
	Since $0<a \le 1$, each $s_\alpha(\cdot)$ is concave and satisfies Condition \ref{cond:strong} and hence so does $s(\cdot)$. By \eqref
	{I:Bound} of Theorem \ref{concaverho}, for the family ${\cal
		D}_{\theta,s}$ one can take $\rho=\theta/(\theta+1)$.
	
	Fix $l >0$.  For $x \ge l$, note that $\partial x^\alpha/\partial x= \alpha x^{\alpha-1} \le\alpha l^{\alpha-1}$ which is bounded and hence $\mu$-integrable on $[0,a]$. Thus by dominated convergence, since $l>0$ is arbitrary, we obtain
	\bea\label{DiffLem}
	s'(x)=\int_0^a \frac{\partial x^\alpha}{\partial x} d\mu(\alpha)=\int_{0}^a \alpha x^{\alpha-1}d\mu(\alpha) \qm{for all $x>0$.}
	\ena
Now note that for $a<1$, $\lim_{x \downarrow 0} s'(x)$ diverges to infinity, and hence \eqref{I:BoundSpec} of Theorem \ref{concaverho} cannot be invoked. We show, in fact, that one may obtain a bound better than $\theta/(\theta+1)$ in this case.

Taking $\theta=1$ and computing $I(x)$ directly from \eqref{I:def}, using \eqref{DiffLem} for the first equality and Fubini's theorem for the second, we have
	\begin{multline*}
	I(x)=\frac{\left[\int_{0}^a \alpha x^{\alpha-1}d\mu(\alpha)\right] \left[\int_{0}^x \int_0^a v^\alpha d\mu(\alpha) dv\right]}{\left[\int_{0}^a x^{\alpha} d\mu(\alpha)\right]^2}
	=\frac{\left[\int_{0}^a \alpha x^{\alpha-1}d\mu(\alpha)\right] \left[\int_{0}^a \frac{x^{\alpha+1}}{\alpha+1} d\mu(\alpha) \right]}{\left[\int_{0}^a x^{\alpha} d\mu(\alpha)\right]^2}\\
	=\frac{\left[\int_{0}^a \int_0^a \frac{\alpha}{\beta+1} x^{\alpha + \beta}d\mu(\alpha) d\mu(\beta)\right]}{\left[\int_{0}^a x^{\alpha} d\mu(\alpha)\right]^2}=\frac{\left[\int_{0}^a \int_0^a \frac{1}{2}\left(\frac{\alpha}{\beta+1}+ \frac{\beta}{\alpha+1}\right) x^{\alpha + \beta}d\mu(\alpha) d\mu(\beta)\right]}{\int_{0}^a \int_0^a x^{\alpha+\beta} d\mu(\alpha)d\mu(\beta)}\\
	\le \sup_{\alpha, \beta \in [0,a]}\frac{1}{2}\left(\frac{\alpha}{\beta+1}+ \frac{\beta}{\alpha+1}\right).
	\end{multline*}
Taking $0 \le \alpha \le \beta \le a$, the reverse case being handled similarly, using the simple fact that
	\begin{eqnarray*}
		(\beta-\alpha)^2 \le\beta-\alpha\qquad\qmq{for $0 \le\alpha\le\beta \le1$}
	\end{eqnarray*}
	shows that for $0 \le\alpha\le\beta\le a$,
	\beas
	\frac{\alpha}{\beta+1}+\frac{\beta}{\alpha+1} \le \frac{2\beta}{\beta+1} \le \frac{2a}{a+1}
	\enas
	and hence one can take $\rho=a/(a+1)$. Note that when $a=1/2$, say, we obtain the upper bound $\rho=1/3$, whereas the bound \eqref{I:Bound} of Theorem \ref{concaverho} gives $1/2$ when $\theta=1$.Taking $\mu$ to be unit mass at  $1$ yields $\rho=1/2$ which recovers the bound on $\rho$ for the standard Dickman derived in \cite{Go17v2}, and as given in Theorem \ref{bound}, for the value $\theta=1$.
\end{example}

\begin{example} \label{ex:s.is.log}
The logarithm $u(x)=\log x$ is another commonly used utility function as it exhibits constant relative risk aversion (CRRA) which often simplifies many problems encountered in macroeconomics and finance, see \cite{ELS05}. Applying a shift to make it non-negative, let
\beas
s(x)=\log(x+1)/ \log 2 \qm{for $x \ge 0$.}
\enas
Clearly $s(\cdot)$ satisfies Condition \ref{cond:strong}. To apply Corollary \ref{othertwo} it remains to compute an upper bound $\rho$ on the integral in \eqref{I:def}. Now since $\lim_{x \downarrow 0}s'(x) < \infty $, by \eqref{I:BoundSpec} of Theorem \ref{concaverho}, we may take
$\rho={\theta}/{(\theta+1)}$.
Noting
\beas
s^{-1}(x)=2^x-1,
\enas
simulating from this distribution by the recursion
\beas
W_{n+1} = (W_n+2)^{U_n^{1/\theta}} - 1 \qmq{for $n \ge 1$ with initial value $W_0=0$,}
\enas
inequality \eqref{GenSimBd} of Corollary \ref{othertwo} yields
\beas
d_1(W_n, D_{\theta,s}) \le \theta \left(\frac{\theta}{\theta +1}\right)^{n-1} \qm{for all $n \ge 0$,}
\enas
using that $0 \le s^{-1}(U^{1/\theta})=2^{U^{1/\theta}}-1 \le 1$ almost surely. 
\end{example}

\section{Smoothness Bounds}\label{Smoothness}
 In this section we turn to proving Theorem  \ref{Lipschitzg} from which Theorem \ref{bound} readily follows. We develop the necessary tools building on \cite{Go17v1}. For notational simplicity, in this section given $(\theta,s)$, let
\bea \label{eq:t=s^theta}
t(x)=s^\theta(x) \qm{for all $x \ge 0$.}
\ena
Throughout this section $t:[0,\infty) \to [0,\infty)$ will be strictly increasing and hence almost everywhere differentiable by Lebesgue's Theorem, see e.g. Section 6.2 of \cite{RF}, inducing the measure $\nu$ satisfying $d\nu/dv=t'(v)$ on $[0,\infty)$, where $v$ is Lebesgue measure. For $h \in L^1([0,a],\nu)$ for some $a>0$, define the averaging operator
\bea\label{avg}
A_x h=\frac{1}{t(x)}\int_0^x h(v)t'(v)dv  \qmq{for $x \in (0,a]$, and} A_0h=h(0) \mathds{1}(t(0)=0).
\ena

\begin{lemma}\label{lem:condf}
Let $t:[0,\infty) \to [0,\infty)$ be a strictly increasing function. If $h \in L^1([0,a], \nu)$ for some $a>0$, then
\bea\label{avgeqv}
f(x)=A_x h \qmq{satisfies} \frac{t(x)}{t'(x)} f'(x)+f(x)=h(x) \qmq{a.e. on $(0,a]$.}
\ena
Conversely, if in addition $t(\cdot)$ is locally absolutely continuous on $[0,\infty)$ with $t(0)=0$, and $f\in \bigcup_{\alpha \ge 0} {\rm Lip}_\alpha$, then the function $h(\cdot)$ as given by the right hand side of \eqref{avgeqv} is in $L^1([0,a], \nu)$ for all $a>0$ and
\bea\label{lem4.1:2}
f(x)=A_x h \qmq{for all $x \in (0,\infty)$.}
\ena
\end{lemma}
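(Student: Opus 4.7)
The plan is to recognize \eqref{avgeqv} as a disguised product-rule identity for the function $x \mapsto t(x) f(x)$, with the two directions of the equivalence then following from the fundamental theorem of calculus for absolutely continuous functions.

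For the forward implication, I would set $H(x) := \int_0^x h(v) t'(v)\, dv$. Since $h t' \in L^1([0,a])$ by the hypothesis $h \in L^1([0,a], \nu)$, $H$ is absolutely continuous on $[0,a]$ and therefore $H'(x) = h(x) t'(x)$ for a.e.\ $x \in (0,a]$. Strict monotonicity of $t$ together with $t \ge 0$ gives $t(x) > 0$ on $(0,a]$, so $f(x) = H(x)/t(x)$ is well-defined there, and $t$ is a.e.\ differentiable by Lebesgue's monotone differentiation theorem. At a.e.\ $x \in (0,a]$ the quotient rule then yields
$$f'(x) \;=\; \frac{H'(x) t(x) - H(x) t'(x)}{t(x)^2} \;=\; \frac{t'(x)}{t(x)}\bigl(h(x) - f(x)\bigr),$$
which rearranges to \eqref{avgeqv} at all points where additionally $t'(x) \ne 0$.

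For the converse, assume $f \in {\rm Lip}_\alpha$, so $f$ is absolutely continuous with $|f'| \le \alpha$ a.e., and define $h$ on $(0, \infty)$ by the right-hand side of \eqref{avgeqv}. The bound $|f(v)| \le |f(0)| + \alpha v$ gives
$$\int_0^a |h(v)|\, t'(v)\, dv \;\le\; \alpha \int_0^a t(v)\, dv + (|f(0)| + \alpha a)\, t(a) \;<\; \infty,$$
using $\int_0^a t'(v)\, dv \le t(a)$ and monotonicity of $t$ (so $t$ is bounded on $[0,a]$); this shows $h \in L^1([0,a], \nu)$. For identity \eqref{lem4.1:2}, I would use that the product $tf$ is locally absolutely continuous, as the product of the locally AC functions $t$ (by hypothesis) and $f$ (by the Lipschitz assumption); hence by the fundamental theorem of calculus together with the a.e.\ product rule for AC functions,
$$t(x) f(x) - t(0) f(0) \;=\; \int_0^x (tf)'(v)\, dv \;=\; \int_0^x \bigl[ t(v) f'(v) + t'(v) f(v) \bigr]\, dv \;=\; \int_0^x h(v) t'(v)\, dv,$$
the final equality being the definition of $h$. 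Since $t(0) = 0$, dividing by $t(x) > 0$ yields $f(x) = A_x h$ for all $x \in (0, \infty)$.

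The main subtlety is handling points where $t'(x) = 0$ in the forward direction, since $t$ being merely strictly increasing (not absolutely continuous) allows such a set to have positive Lebesgue measure, on which the literal ODE $(t(x)/t'(x)) f'(x) + f(x) = h(x)$ is ill-defined. The cleanest workaround is to verify throughout the cleared form $t(x) f'(x) + t'(x) f(x) = h(x) t'(x)$, which is unambiguous at every $x$ and suffices for the downstream Stein-method estimates.
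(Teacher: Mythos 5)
Your proof is correct and follows essentially the same route as the paper's: the forward direction by differentiating $A_xh=\frac{1}{t(x)}\int_0^x h\,t'\,dv$ (quotient rule), and the converse via the same $L^1([0,a],\nu)$ estimate, local absolute continuity of the product $t(x)f(x)$, and the fundamental theorem of calculus with $t(0)=0$. Your closing remark about the set where $t'(x)=0$ is extra care the paper does not spell out (its proof implicitly has the same issue), so it only strengthens the argument rather than marking a divergence.
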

    \begin{proof}
    	The first claim follows from the definition \eqref{avg} of $A_xh$ by differentiation. For the second claim, noting that the case $\alpha=0$ is trivial, fix $\alpha>0$. Since $t(\cdot)$ is locally absolutely continuous and increasing, for any $a>0$,
    	\beas
    	\Bvert\int_0^a h(v)t'(v)dv\Bvert \le \int_0^a \left(t(v)|f'(v)| + |f(v)| t'(v) \right) dv \le \alpha a t(a) + (|f(0)|+\alpha a)t(a) <\infty
    	\enas
    	and hence $h \in L^1([0,a], \nu)$ for all $a>0$. Now note that the function $f(x)t(x)$ is locally absolutely continuous on $[0,\infty)$ since both $f(\cdot)$ and $t(\cdot)$ are locally absolutely continuous and for any compact $C \subset (0,\infty)$, the function $g(u,v)=uv$ is Lipschitz on $f(C) \times t(C)$. Thus, for $x>0$, we have
    	\beas
    	A_x h =\frac{1}{t(x)}  \int_0^x h(v)t'(v)dv=\frac{1}{t(x)}\int_0^x(t(v)f'(v)+t'(v)f(v))dv=\frac{1}{t(x)}(f(x)t(x))=f(x).
       \enas
	\end{proof}
\begin{lemma}\label{biasrel}
Let $t:[0,\infty) \to [0,\infty)$ be given by $t^{1/\theta}(\cdot)=s(\cdot)$ for $s(\cdot)$ a strictly increasing locally absolutely continuous function on $[0,\infty)$ with $s(0)=0$. Then $t(\cdot)$ is also locally absolutely continuous on $[0,\infty)$. Moreover, for $W$ a non-negative random variable and $W^*$ with distribution as in \eqref{bias}, for $h \in \bigcap_{a \in S}L^1([0,a],\nu)$ where $S$ is the support of $W+1$,
	\bea \label{eq:pres.mean.zero}
	E[h(W^*)]=E[A_{W+1}h]
	\ena
	whenever either expectation above exists, and 
	letting $f(x)=A_{x}h$ for all $x \in S$, 
	\bea \label{eq:Eop*}
	E\left[\frac{t(W^*)}{t'(W^*)} f'(W^*)+f(W^*)\right]=E[f(W+1)],
	\ena
 when the expectation of either side exists.
\end{lemma}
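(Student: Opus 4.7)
The plan is to handle the three assertions in the lemma in order. For local absolute continuity of $t(\cdot)=s^\theta(\cdot)$, I would note that on any compact interval $[0,a]$, the strictly increasing locally absolutely continuous function $s(\cdot)$ takes values in the compact set $[0,s(a)]$, on which $u\mapsto u^\theta$ is Lipschitz when $\theta\ge 1$ and is at least absolutely continuous when $\theta\in(0,1)$. The composition is then locally absolutely continuous: for $\theta\ge 1$ via the standard Lipschitz-outer-composition result, and for $\theta\in(0,1)$ by the change-of-variable theorem for monotone absolutely continuous maps, which reduces the issue to absolute continuity of $u\mapsto u^\theta$ on the image.

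For the central identity $E[h(W^*)]=E[A_{W+1}h]$, I would condition on $W=w$ and use definition \eqref{bias} to rewrite the conditional expectation as
\[
E[h(W^*)\mid W=w]=\int_0^1 h\!\left(s^{-1}(u^{1/\theta}s(w+1))\right)du.
\]
The key substitution is $\phi(v)=t(v)/t(w+1)$, which by the first step is absolutely continuous and strictly increasing from $[0,w+1]$ onto $[0,1]$, with $\phi'(v)=t'(v)/t(w+1)$ and $s^{-1}(\phi(v)^{1/\theta}s(w+1))=v$. Applying the change-of-variable formula for absolutely continuous monotone transformations converts the inner integral into $t(w+1)^{-1}\int_0^{w+1}h(v)\,t'(v)\,dv=A_{w+1}h$. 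The hypothesis $h\in L^1([0,a],\nu)$ for every $a\in S$ guarantees the inner integral is finite pointwise, and Fubini's theorem yields \eqref{eq:pres.mean.zero} whenever either expectation exists.

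To obtain \eqref{eq:Eop*}, I would combine \eqref{eq:pres.mean.zero} with the first claim of Lemma \ref{lem:condf}. Applied to $f(x)=A_xh$, that lemma delivers the a.e.\ pointwise identity
\[
\frac{t(x)}{t'(x)}f'(x)+f(x)=h(x) \qquad \text{on } (0,\infty).
\]
The conditional law of $W^*$ given $W=w$ has Lebesgue density $v\mapsto t'(v)/t(w+1)$ on $[0,w+1]$, as is visible from the substitution performed in the previous step, so the Lebesgue-null exceptional set associated with this identity carries zero $W^*$-mass conditionally, and hence unconditionally. Evaluating the identity at $W^*$, taking expectation, and invoking \eqref{eq:pres.mean.zero} applied to $h$ then yields
\[
E\!\left[\frac{t(W^*)}{t'(W^*)}f'(W^*)+f(W^*)\right]=E[h(W^*)]=E[A_{W+1}h]=E[f(W+1)].
\]

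The main obstacle I anticipate is twofold: first, making the change of variables rigorous given only local absolute continuity of $s$, where one cannot appeal to classical $C^1$ substitution; second, transferring the a.e.\ differential relation from Lemma \ref{lem:condf} to an almost-sure identity under the law of $W^*$. Both tasks hinge on having explicit control of the conditional density $t'(v)/t(w+1)$ of $W^*$ given $W=w$, which becomes available precisely once the local absolute continuity of $t$ is established in the first step.
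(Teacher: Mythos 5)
Your proposal is correct and takes essentially the same route as the paper's proof: local absolute continuity of $t(\cdot)$ by composing $s(\cdot)$ with $u\mapsto u^\theta$, the same change of variables (carried out conditionally on $W=w$ in one step rather than the paper's two) to obtain \eqref{eq:pres.mean.zero}, and then absolute continuity of the law of $W^*$ combined with the a.e.\ identity of Lemma \ref{lem:condf} and with \eqref{eq:pres.mean.zero} to obtain \eqref{eq:Eop*}. The only cosmetic difference is that you read the conditional density $t'(v)/t(w+1)$ of $W^*$ given $W=w$ directly off the substitution, whereas the paper invokes the density argument of \cite{Go17v1} for the unconditional law of $W^*$; both serve the identical purpose of discharging the Lebesgue-null exceptional set.
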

\begin{proof}
Since $s(\cdot)$ is locally absolutely continuous on $[0,\infty)$ and the function $u^\theta$ is Lipschitz on any compact subset of $(0, \infty)$, we have that $t(\cdot)$ is locally absolutely continuous on $(0,\infty)$, and hence the first claim of the lemma follows by Lemma \ref{abs.cont.}.

Next, as $A_x h$ exists for all $x \in S$ for any $h(\cdot)$ satisfying the hypotheses of the lemma and $W^* \le_{\rm st}W+1$ by \eqref{biassotcbd}, the averages $A_{W+1}h$ and $A_{W^*} h$ both exist. Now let the expectation on the left hand side of \eqref{eq:pres.mean.zero} exist. Using \eqref{bias} and \eqref{eq:t=s^theta} for the first equality and applying the change of variable $v=ut(W+1)$ in the resulting integral, we obtain
	\begin{multline*}
	E[h(W^*)]=E[h(t^{-1}[U t(W+1)])]=E\int_0^1 h(t^{-1}[u t(W+1)]) du\\=E\left[\frac{1}{t(W+1)} \int_0^{t(W+1)} h(t^{-1}(v)) dv\right] = E\left[\frac{1}{t(W+1)}\int_0^{W+1} h(w)t'(w) dw \right]=E[A_{W+1} h],
	\end{multline*}
where in the second to last equality we have applied the change of variable $t(w)=v$ and the fact that $t(0)=0$. When the expectation on the right hand side of \eqref{eq:pres.mean.zero} exists we apply the same argument, reading the display above from right to left.

	To prove the second claim of the lemma, by an argument similar to the one at the start of Section 3 of \cite{Go17v1}, the distribution of $U^{1/\theta} s(W+1)$ is absolutely continuous with respect to Lebesgue measure, with density, say $p(\cdot)$. By a simple change of variable, we obtain that $W^*$ has density
	\beas
	p_{W^*}(x)= p(s(x))s'(x) \qm{almost everywhere,}
	\enas  
	and hence the distribution of $W^*$ is also absolutely continuous with respect to Lebesgue measure. Thus by \eqref{avgeqv},
	\beas
	E\left[\frac{t(W^*)}{t'(W^*)} f'(W^*)+f(W^*)\right]=E[h(W^*)]
	\enas
	and \eqref{eq:Eop*} follows from the first claim.
\end{proof}
For an a.e. differentiable function $f(\cdot)$, let
\bea\label{D}
\mathbb{D}_t  f(x)= \frac{t(x)}{t'(x)} f'(x)+ f(x)-f(x+1).
\ena

Note that if $f(x)=A_x g$ for some $g(\cdot)$, then under the conditions of Lemma \ref{lem:condf}, by \eqref{avgeqv} we may write \eqref{D} as
\bea\label{eq:stein.Ax+1.form}
\mathbb{D}_t f(x)=g(x)-A_{x+1}g \qmq{almost everywhere.}
\ena

Condition \ref{cond:s} is assumed in some of the following statements to assure that the distribution of $D_{\theta,s}$ exists uniquely. The proof of the next lemma is omitted, as it follows using Lemmas \ref{lem:condf} and \ref{biasrel}, similar to the proof of Lemma 3.2 in \cite{Go17v1}.

\begin{lemma} \label{lem:Dexpects} 
	Let $\theta>0$ and $s(\cdot)$ satisfy Condition \ref{cond:s}. If $s(\cdot)$ is locally absolutely continuous on $[0,\infty)$, then, 
	\beas
	E[h(D_{\theta,s})]=E[A_{D_{\theta,s}+1} h] \qmq{and}	E[\mathbb{D}_t f(D_{\theta,s})]=0,
	\enas
for all $h(\cdot) \in \bigcap_{a \in (0,\infty)}L^1([0,a],\nu)$ and $f(\cdot) \in \bigcup_{\alpha \ge 0} {\rm Lip}_\alpha$ for which $E[\mathbb{D}_t f(D_{\theta,s})]$ exists, respectively.
\end{lemma}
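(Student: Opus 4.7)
The plan is to derive both identities from the fixed-point property $D_{\theta,s}=_d D_{\theta,s}^*$ supplied by Theorem \ref{existence}, together with the averaging-operator identities of Lemma \ref{biasrel}. Under Condition \ref{cond:s} and local absolute continuity of $s(\cdot)$, the opening argument in the proof of Lemma \ref{biasrel} shows that $t=s^\theta$ is locally absolutely continuous on $[0,\infty)$ with $t(0)=0$, so the hypotheses needed to invoke Lemma \ref{biasrel} and the converse direction of Lemma \ref{lem:condf} are in force throughout.

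For the first identity, I would observe that Theorem \ref{existence} gives $D_{\theta,s}=_d D_{\theta,s}^*$, so
\[
E[h(D_{\theta,s})] = E[h(D_{\theta,s}^*)].
\]
The integrability hypothesis $h\in\bigcap_{a\in(0,\infty)} L^1([0,a],\nu)$ is in particular stronger than what is needed to apply \eqref{eq:pres.mean.zero} of Lemma \ref{biasrel} with $W=D_{\theta,s}$, and that identity then gives $E[h(D_{\theta,s}^*)] = E[A_{D_{\theta,s}+1} h]$.

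For the second identity, I would represent $f$ as an average. Given $f\in{\rm Lip}_\alpha$, set
\[
h(x) = \frac{t(x)}{t'(x)}f'(x)+f(x)
\]
at the (full-measure) set of $x$ where $t'(x)$ and $f'(x)$ both exist. The converse direction of Lemma \ref{lem:condf}, available because $t(\cdot)$ is locally absolutely continuous on $[0,\infty)$ with $t(0)=0$, yields $h\in\bigcap_{a>0} L^1([0,a],\nu)$ and $f(x)=A_x h$ for all $x>0$. Applying \eqref{eq:Eop*} of Lemma \ref{biasrel} with $W=D_{\theta,s}$ and using $W^*=_d W$ on the left then gives
\[
E\left[\frac{t(D_{\theta,s})}{t'(D_{\theta,s})}f'(D_{\theta,s})+f(D_{\theta,s})\right] = E[f(D_{\theta,s}+1)],
\]
which is precisely $E[\mathbb{D}_t f(D_{\theta,s})]=0$ upon rearrangement via \eqref{D}.

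The main obstacle I anticipate is bookkeeping on integrability. Lemma \ref{biasrel} requires that at least one side of \eqref{eq:Eop*} have a well-defined expectation; under the standing hypothesis that $E[\mathbb{D}_t f(D_{\theta,s})]$ exists one needs to verify the two summands of $\mathbb{D}_t f$ split coherently. Since $f$ is Lipschitz we have $|f(x)|\le |f(0)|+\alpha x$, so both $E[f(D_{\theta,s})]$ and $E[f(D_{\theta,s}+1)]$ are finite whenever $E[D_{\theta,s}]<\infty$, a condition implied by $E[s^{-1}(D_\theta)]<\infty$ via $D_{\theta,s}\le_{\mathrm{st}} s^{-1}(D_\theta)$ from Theorem \ref{existence}; the remaining term $E[(t/t')(D_{\theta,s})f'(D_{\theta,s})]$ then inherits its integrability from the assumed existence of $E[\mathbb{D}_t f(D_{\theta,s})]$, legitimizing the use of \eqref{eq:Eop*} and completing the proof.
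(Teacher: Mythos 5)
Your proof follows exactly the route the paper intends: the paper omits its own proof, saying only that the lemma follows from Lemmas \ref{lem:condf} and \ref{biasrel}, and that is precisely what you do — the fixed-point property $D_{\theta,s}=_d D_{\theta,s}^*$ from Theorem \ref{existence} combined with \eqref{eq:pres.mean.zero} gives the first identity, and writing $f=A_x h$ via the converse part of Lemma \ref{lem:condf} and applying \eqref{eq:Eop*} gives the second. One small caveat: your closing integrability remark invokes $E[D_{\theta,s}]<\infty$ (via $E[s^{-1}(D_\theta)]<\infty$), which is not among the lemma's hypotheses — the only stated safeguard is the clause that $E[\mathbb{D}_t f(D_{\theta,s})]$ exists, so the splitting of the two sides is more properly justified by the existence provisos built into \eqref{eq:pres.mean.zero} and \eqref{eq:Eop*} (and is automatic in the settings where the paper later applies the lemma, which do assume a finite mean); this does not change the substance of your argument.
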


The second claim of the lemma and \eqref{D}  suggest the Stein equation 
 \bea \label{eq:f.stein}
 \frac{t(x)}{t'(x)}f'(x)+f(x)-f(x+1)=h(x)-E[h(D_{\theta,s})],
 \ena
which via \eqref{eq:stein.Ax+1.form} may be rewritten as
\bea\label{steineq}
g(x)-A_{x+1} g=h(x)-E[h(D_{\theta,s})]
\ena
whenever $g(\cdot)$ is such that $A_x g$ exists for all $x$ and $f(x)=A_x g$.

To prove Theorem \ref{bound}, we first need to identify a set of broad sufficient conditions on $t(\cdot)$ under which we can find a nice solution $g(\cdot)$ to \eqref{steineq} when $h \in {\rm Lip}_{1,0}$, where, suppressing dependence on $\theta$ and $s(\cdot)$ for notational simplicity, for $\alpha>0$, we let
\bea\label{Lip0}
{\rm Lip}_{\alpha,0}=\{h:[0,\infty) \to \mathbb{R}: h \in {\rm Lip}_\alpha, E[h(D_{\theta,s})]=0\}.
\ena

We note that the integral $I(x)$ in \eqref{I:def} can be written as the one appearing in \eqref{eq:18.for.t} below when $t(x)=s^\theta(x)$ as in \eqref{eq:t=s^theta}. Also note that by Lemma \ref{biasrel}, if $s(\cdot)$ is strictly increasing with $s(0)=0$, locally absolutely continuity of one of $s(\cdot)$ and $t(\cdot)$ implies that of the other. Hence, given that either one is locally absolutely continuous on $[0,\infty)$, as any continuous function $h:[0,\infty) \to \mathbb{R}$ is bounded on $[0,a]$
for all $a \ge 0$, we have $h \in \cap_{a>0} L^1([0,a],\nu)$. As the integrability of $h(\cdot)$ can thus be easily verified, it will not be given further mention.
\begin{lemma} \label{lem:t.rho.integral.bound}
Let $t:[0,\infty) \to [0,\infty)$ be a strictly increasing and locally absolutely continuous function on $[0,\infty)$. If $h(\cdot)$ is absolutely continuous on $[0,a]$ for some $a>0$ with a.e. derivative $h'(\cdot)$, then with $A_xh$ as in \eqref{avg}, 
\bea \label{eq:Axh.prime.general}
(A_x h)'=\frac{t'(x)}{t^2(x)} \int_0^x h'(u)t(u)du \qm{a.e.\ on $x \in (0,a]$.}
\ena
If there exists some $\rho \in [0,\infty)$ such that 
\bea \label{eq:18.for.t}
\esssup_{x>0}I(x) \le \rho \qmq{where} I(x)=\frac{t'(x)}{t^2(x)}\int_0^x  t(u) du,
\ena
then $A_x h \in {\rm Lip}_{\alpha\rho}$ on $[0,\infty)$ whenever $h \in {\rm Lip}_\alpha$ for some $\alpha \ge 0$.
\end{lemma}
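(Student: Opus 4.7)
The plan is to derive the derivative formula in \eqref{eq:Axh.prime.general} by combining integration by parts with direct differentiation of the defining integral for $A_x h$, and then to read off the Lipschitz bound by integrating the pointwise estimate from \eqref{eq:18.for.t}.

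For the derivative formula, I would start from the definition $A_x h = \frac{1}{t(x)} \int_0^x h(v) t'(v) dv$ on $(0,a]$. Since both $h$ and $t$ are absolutely continuous, and hence bounded, on $[0,a]$, their pointwise product $h \cdot t$ is absolutely continuous there, so integration by parts is valid and yields
\[
\int_0^x h(v) t'(v) \, dv = h(x) t(x) - h(0) t(0) - \int_0^x h'(v) t(v) \, dv.
\]
In the applications of interest, $t = s^\theta$ with $s(0)=0$, so $t(0)=0$ and the boundary term vanishes; taking this as implicit in the statement and dividing by $t(x)$ gives $A_x h = h(x) - \frac{1}{t(x)} \int_0^x h'(v) t(v) dv$. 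On $(0,a]$ the strict monotonicity of $t(\cdot)$ gives $t(x)>0$, so $1/t(x)$ is locally absolutely continuous there with a.e.\ derivative $-t'(x)/t^2(x)$, while $\int_0^x h'(v) t(v) dv$ is locally absolutely continuous with a.e.\ derivative $h'(x) t(x)$. Applying the product rule (valid a.e.) then collapses two of the resulting terms:
\[
(A_x h)' = h'(x) + \frac{t'(x)}{t^2(x)} \int_0^x h'(v) t(v) \, dv - h'(x) = \frac{t'(x)}{t^2(x)} \int_0^x h'(v) t(v) \, dv
\]
a.e.\ on $(0,a]$, which is \eqref{eq:Axh.prime.general}.

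For the Lipschitz claim, fix $h \in {\rm Lip}_\alpha$; then $h$ is absolutely continuous on every $[0,a]$ with $|h'| \le \alpha$ a.e., so \eqref{eq:Axh.prime.general} and \eqref{eq:18.for.t} yield
\[
|(A_x h)'| \le \alpha \cdot \frac{t'(x)}{t^2(x)} \int_0^x t(v) \, dv = \alpha I(x) \le \alpha \rho
\]
a.e.\ on $(0,\infty)$. Local absolute continuity of $A_x h$ on $(0,\infty)$ promotes this pointwise bound to $|A_y h - A_x h| \le \alpha \rho (y-x)$ for all $0<x \le y<\infty$. To cover the boundary at $x=0$, I would verify that $\lim_{x \downarrow 0} A_x h = h(0) = A_0 h$ via the elementary estimate
\[
|A_x h - h(0)| = \Bigl| \frac{1}{t(x)} \int_0^x (h(v) - h(0)) t'(v) \, dv \Bigr| \le \alpha x,
\]
using $|h(v)-h(0)| \le \alpha v \le \alpha x$ together with $\int_0^x t'(v) dv = t(x)$ (which again relies on $t(0)=0$). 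Letting $x \downarrow 0$ in the interior Lipschitz bound then delivers $|A_y h - A_0 h| \le \alpha \rho y$ and completes the proof.

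The main technical point is the behaviour at the origin: $1/t(x)$ is singular there, so both the derivative formula and the pointwise bound only live on $(0,\infty)$, and the Lipschitz estimate must be extended down to $0$ through a separate continuity argument. Everything else is bookkeeping with absolute continuity and the product rule, provided one correctly identifies $t(0)=0$ as the hidden hypothesis that kills the boundary term in the integration by parts.
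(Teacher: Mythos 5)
Your proposal is correct and follows essentially the paper's own route: your integration by parts is just the paper's Fubini computation in different clothing (both arrive at the representation $A_x h = h(x) - \frac{1}{t(x)}\int_0^x h'(u)t(u)\,du$ and then differentiate), and the Lipschitz step — a.e.\ derivative bound of $\alpha\rho$ combined with local absolute continuity of $A_xh$ on $(0,\infty)$, then continuity at the origin — is the same argument, with your explicit treatment of the endpoint $0$ supplying a detail the paper leaves unstated. Your flag that $t(0)=0$ is a hidden hypothesis is apt rather than a gap: the paper's reduction to $h(0)=0$ likewise tacitly uses $\int_0^x t'(v)\,dv = t(x)$, the formula \eqref{eq:Axh.prime.general} genuinely fails without it (e.g.\ $t(x)=1+x$, $h\equiv 1$), and in every application of the lemma $t=s^\theta$ with $s(0)=0$, so your reading is the intended one.
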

\begin{proof}
For the first claim, first assume $h(0)=0$. Using Fubini's theorem in the third equality and then the local absolute continuity of $t(\cdot)$, for $x \in (0,a]$, we obtain
\begin{multline} \label{eq:Axh.Fubini}
A_x h=\frac{1}{t(x)}\int_0^x h(v)t'(v)dv=\frac{1}{t(x)}\int_0^x \int_0^v  t'(v)  h'(u)du dv\\
=\frac{1}{t(x)}\int_0^x \int_u^x  t'(v)  h'(u)dv du=\frac{1}{t(x)}\int_0^x h'(u) [t(x)-t(u)]du,
\end{multline}
and differentiation yields \eqref{eq:Axh.prime.general}. 

To handle the case where $h(0)$ is not necessarily equal to zero, letting  $h_0(x)=h(x)-h(0)$ the result follows by noting that $h_0'(\cdot)=h'(\cdot)$ and, by the absolute continuity of $t(\cdot)$, that $(A_x h_0)'=(A_x h - h(0))'=(A_xh)'$.

For the final claim,  using \eqref{eq:Axh.prime.general} and \eqref{eq:18.for.t}, for every $x$ for which $I(x) \le \rho$ and $t'(x)$ exists, we obtain
	\bea \label{eq:Axh'.alm.all.x}
	|(A_x h)'|=\Bvert\frac{t'(x)}{t^2(x)} \int_0^x h'(u)t(u)du\Bvert \le \|h'\|_\infty \frac{t'(x)}{t^2(x)} \int_0^x t(u)du \le\alpha \rho.
	\ena
	As $t(\cdot)$ is locally absolutely continuous, $A_xh$, as seen by the first equality in \eqref{eq:Axh.Fubini}, is a ratio of two locally absolutely continuous functions. For any fixed compact subset $C$ of $(0, \infty)$, since $u(x):=\int_0^x h(v)t'(v)dv$ is continuous, $u(C)$ is also compact and hence bounded. Also, since $t(\cdot)$ is strictly increasing with $t(0) \ge 0$, $t(C)$ is bounded away from $0$. Hence the function $f(u,v)=u/v$ restricted to $u(C)\times t(C)$ is Lipschitz, implying that $A_x h$ is absolutely continuous on $C$. Thus, it follows that $A_x h \in {\rm Lip}_{\alpha \rho}$, as only $x$ values in a set of measure zero have been excluded in \eqref{eq:Axh'.alm.all.x}.
\end{proof}

\begin{remark}
	If $\theta>0$ and $t$ is given by $t(\cdot)=s^{\theta}(\cdot)$ for $s(\cdot)$ concave and continuous at zero, then $\|I\|_\infty \le \theta/(\theta+1)$ by Theorem \ref{concaverho}. Hence $\rho \in [0,1)$ always exists for such choices of $t$.
\end{remark}

Lemmas \ref{lem:suph0}, \ref{lem:alpha.rho^n} and Theorem \ref{Lipschitzg} generalize Lemmas 3.5, 3.6 and Theorem 3.1 in \cite{Go17v1} for the generalized Dickman; their proofs follow closely those in \cite{Go17v1} and hence are omitted.

\begin{lemma} \label{lem:suph0}
	Let $\theta>0$ and $s(\cdot)$ satisfy Condition \ref{cond:s}. Moreover assume that $\mu=E[D_{\theta,s}]$ exists. Then with ${\rm Lip}_{\alpha,0}$ as in \eqref{Lip0}, for any $\alpha>0$,
	\bea \label{eq:sup.h0.gen.D}
	{\rm sup}_{h \in {\rm Lip}_{\alpha,0}} |h(0)|=\alpha \mu.
	\ena
\end{lemma}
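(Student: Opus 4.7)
The plan is to prove equality by verifying both inequalities. This statement is actually elementary once one remembers that a Lipschitz function is sandwiched between two affine functions of the same slope.

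For the upper bound $\sup_{h \in {\rm Lip}_{\alpha,0}}|h(0)| \le \alpha\mu$, I would use the Lipschitz hypothesis to write, for every $x \ge 0$,
\begin{equation*}
h(0) - \alpha x \,\le\, h(x) \,\le\, h(0) + \alpha x.
\end{equation*}
Evaluating these inequalities at $x = D_{\theta,s}$ and taking expectation (which is legitimate since $\mu = E[D_{\theta,s}]<\infty$ by assumption) gives, using the mean-zero constraint $E[h(D_{\theta,s})]=0$,
\begin{equation*}
h(0) - \alpha\mu \,\le\, 0 \,\le\, h(0) + \alpha\mu,
\end{equation*}
hence $|h(0)| \le \alpha\mu$.

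For the matching lower bound, I would exhibit an explicit extremizer. The candidate is $h_\star(x) = \alpha(\mu - x)$ on $[0,\infty)$. Clearly $h_\star \in {\rm Lip}_\alpha$ (indeed $|h_\star(x)-h_\star(y)|=\alpha|x-y|$), and $E[h_\star(D_{\theta,s})] = \alpha\mu - \alpha\mu = 0$, so $h_\star \in {\rm Lip}_{\alpha,0}$. Since $|h_\star(0)| = \alpha\mu$, this forces $\sup_{h \in {\rm Lip}_{\alpha,0}}|h(0)| \ge \alpha\mu$, and combining with the previous step yields \eqref{eq:sup.h0.gen.D}.

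There is essentially no obstacle here — the only subtlety worth flagging is the necessity of $\mu<\infty$, which is why the statement pre-assumes existence of $E[D_{\theta,s}]$; without it the upper-bound step would be vacuous. I would present the argument as two short displays with a single sentence of commentary between them, and not invoke any machinery from Section \ref{Smoothness} beyond the definitions \eqref{Lip0} of ${\rm Lip}_{\alpha,0}$ and \eqref{def:Lipalpha} of ${\rm Lip}_\alpha$.
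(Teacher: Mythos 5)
Your proof is correct and is essentially the argument the paper intends: the proof is omitted there with a pointer to Lemma 3.5 of \cite{Go17v1}, which runs exactly this way, namely $|h(0)|=|E[h(0)-h(D_{\theta,s})]|\le \alpha E[D_{\theta,s}]=\alpha\mu$ for the upper bound (your two-sided sandwich is the same estimate) and the affine extremizer $h_\star(x)=\alpha(\mu-x)\in{\rm Lip}_{\alpha,0}$ for attainment.
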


To define iterates of the averaging operator on a function $h(\cdot)$, let $A_{x+1}^0h=h(x)$ and
\beas
A_{x+1}^n=A_{x+1}(A_{\sbull +1}^{n-1}) \qm{for $n \ge 1$,}
\enas
and for a class of functions ${\cal H}$ let 
\beas
A_{x+1}^n({\cal H})=\{A_{x+1}^nh: h \in {\cal H}\}\qm{for $n \ge 0$.}
\enas

\begin{lemma}  \label{lem:alpha.rho^n}
	Let $s(\cdot)$ satisfy Condition \ref{cond:s} and be locally absolutely continuous on $[0, \infty)$. If
	there exists $\rho \in [0,\infty)$ such that 

	 \eqref{eq:18.for.t} holds, then for all $\theta>0, \alpha \ge 0$ and $n \ge 0$, 
	\beas
	A_{x+1}^n ({\rm Lip}_{\alpha,0}) \subset {\rm Lip}_{\alpha \rho^n,0}.
	\enas
\end{lemma}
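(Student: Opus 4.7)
The proof proceeds by induction on $n$, leveraging the two preservation properties already established in the excerpt: the Lipschitz contraction of the averaging operator (Lemma \ref{lem:t.rho.integral.bound}) and the mean-zero preservation under $D_{\theta,s}$-biasing (Lemma \ref{lem:Dexpects}).

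The base case $n=0$ is immediate from $A_{x+1}^0 h = h$ and the definition \eqref{Lip0}. For the inductive step, I would fix $h \in \mathrm{Lip}_{\alpha,0}$, set $h_n := A_{\sbull+1}^n h$, and assume $h_n \in \mathrm{Lip}_{\alpha\rho^n,0}$. I must then show that $h_{n+1}(x) = A_{x+1} h_n$ lies in $\mathrm{Lip}_{\alpha\rho^{n+1},0}$, which amounts to verifying two things: the Lipschitz bound and $E[h_{n+1}(D_{\theta,s})]=0$.

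For the Lipschitz bound, I apply Lemma \ref{lem:t.rho.integral.bound} with $h_n$ in place of $h$ and $\alpha\rho^n$ in place of $\alpha$; since the hypothesis \eqref{eq:18.for.t} holds with constant $\rho$, the lemma yields $A_y h_n \in \mathrm{Lip}_{\alpha\rho^{n+1}}$ as a function of $y$ on $[0,\infty)$. The function $x \mapsto A_{x+1}h_n$ is merely the composition of this map with the translation $x \mapsto x+1$ from $[0,\infty)$ into $[1,\infty) \subset [0,\infty)$, which preserves the Lipschitz constant. For the mean-zero property, by Lemma \ref{lem:Dexpects} applied to $h_n$ (valid since $h_n$ is continuous, hence integrable on compact sets with respect to $\nu$),
\begin{equation*}
E[h_{n+1}(D_{\theta,s})] = E[A_{D_{\theta,s}+1} h_n] = E[h_n(D_{\theta,s})] = 0,
\end{equation*}
where the last equality uses the induction hypothesis that $h_n \in \mathrm{Lip}_{\alpha\rho^n,0}$.

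I do not anticipate a serious obstacle here, as both required ingredients are already encapsulated in the preceding lemmas; the only minor point deserving care is the verification that $h_n$ satisfies the integrability hypothesis of Lemma \ref{lem:Dexpects} at each stage, which is automatic because Lipschitz functions on $[0,\infty)$ are continuous and hence bounded on $[0,a]$ for every $a>0$, so $h_n \in \bigcap_{a>0} L^1([0,a],\nu)$ as noted in the paragraph preceding Lemma \ref{lem:t.rho.integral.bound}. The induction then closes, giving the desired containment for all $n \ge 0$.
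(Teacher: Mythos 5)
Your proof is correct and follows exactly the route the paper intends: the paper omits the argument (deferring to Lemma 3.6 of \cite{Go17v1}), but the natural induction you give --- the single-step Lipschitz contraction from Lemma \ref{lem:t.rho.integral.bound} combined with the mean-zero preservation $E[A_{D_{\theta,s}+1}h_n]=E[h_n(D_{\theta,s})]$ from Lemma \ref{lem:Dexpects} --- is precisely the intended generalization of that proof. Your attention to the $\nu$-integrability of each iterate and to the translation $x\mapsto x+1$ preserving the Lipschitz constant covers the only points requiring care.
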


In the following, by replacing $h(x)$ by $h(x)-E[h(D_{\theta,s})]$, when handling the Stein equations \eqref{eq:f.stein} and \eqref {steineq}, without loss of generality we may assume that $E[h(D_{\theta,s})]=0$.

For a given function $h \in {\rm Lip}_{\alpha,0}$ for some $\alpha \ge 0$, let
\bea \label{def:hstar.g}
h^{(\star k)}(x)=A_{x+1}^k h \qmq{for $k \ge 0$,} g(x)=\sum_{k \ge 0} h^{(\star k)}(x) \qmq{and} g_n(x)=\sum_{k=0}^n h^{(\star k)}(x).
\ena
Also recall definition \eqref{supnorm} that for any $a \ge 0$ and function $f(\cdot)$, $\|f\|_{[0,a]}= \sup_{x \in [0,a]}|f(x)|$.

\begin{theorem}\label{Lipschitzg}
Let $s(\cdot)$ satisfy Condition \ref{cond:s} and be locally absolutely continuous on $[0, \infty)$. Further assume that $\mu=E[D_{\theta,s}]$ exists. If there exists $\rho\in [0,1)$ such that \eqref{eq:18.for.t} holds, then for all $a \ge 0$ and $h \in {\rm Lip}_{1,0}$ we have
\bea \label{eq:hstar.sup.bound}
\|h^{(\star k)}\|_{[0,a]} \le (\mu+a) \rho^k,
\ena
$g_n \in {\rm Lip}_{{(1-\rho^{n+1})/(1-\rho)}}$ and $g(\cdot)$ given by \eqref{def:hstar.g} is a ${\rm Lip}_{{1/(1-\rho)}}$ solution to \eqref{steineq}.
\end{theorem}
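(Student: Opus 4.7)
The plan is to derive all three claims from the already established iteration Lemma \ref{lem:alpha.rho^n} together with the zero-point bound from Lemma \ref{lem:suph0}, and then recognize that the partial sums $g_n$ form a telescoping solution to a perturbed Stein equation.

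First, I would establish the pointwise bound \eqref{eq:hstar.sup.bound}. Since $h \in {\rm Lip}_{1,0}$, applying Lemma \ref{lem:alpha.rho^n} with $\alpha=1$ gives $h^{(\star k)} = A_{x+1}^k h \in {\rm Lip}_{\rho^k,0}$. Consequently, for any $x \in [0,a]$,
\begin{equation*}
|h^{(\star k)}(x)| \le |h^{(\star k)}(0)| + \rho^k x \le \rho^k \mu + \rho^k a = (\mu+a)\rho^k,
\end{equation*}
where the bound on $|h^{(\star k)}(0)|$ follows from Lemma \ref{lem:suph0} applied with $\alpha = \rho^k$. This gives \eqref{eq:hstar.sup.bound}.

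Next, for the Lipschitz estimate on $g_n$, I would simply note that $g_n$ is a finite sum of functions with Lipschitz constants $\rho^k$, so by the triangle inequality $g_n \in {\rm Lip}_{\sum_{k=0}^n \rho^k} = {\rm Lip}_{(1-\rho^{n+1})/(1-\rho)}$. Passing to the limit is justified by \eqref{eq:hstar.sup.bound}: on any compact interval $[0,a]$ the series defining $g$ is uniformly bounded by $(\mu+a)\sum_{k\ge 0}\rho^k$, so converges uniformly, hence pointwise, and the same triangle-inequality estimate gives, for all $x,y \ge 0$,
\begin{equation*}
|g(x)-g(y)| \le \sum_{k\ge 0} |h^{(\star k)}(x) - h^{(\star k)}(y)| \le \frac{|x-y|}{1-\rho},
\end{equation*}
so $g \in {\rm Lip}_{1/(1-\rho)}$.

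Finally, to verify that $g$ solves \eqref{steineq}, I would like to interchange $A_{x+1}$ with the infinite sum. Since $A_{x+1}$ is an integral operator against $t'(v)dv/t(x+1)$ on $[0,x+1]$ and \eqref{eq:hstar.sup.bound} provides the uniform bound $|h^{(\star k)}(v)| \le (\mu + x+1)\rho^k$ on this interval, the dominated convergence theorem (with $\sum_k (\mu+x+1)\rho^k < \infty$) legitimizes
\begin{equation*}
A_{x+1} g = \sum_{k\ge 0} A_{x+1} h^{(\star k)} = \sum_{k\ge 0} h^{(\star k+1)}(x) = g(x) - h(x),
\end{equation*}
which rearranges to \eqref{steineq} under the normalization $E[h(D_{\theta,s})]=0$. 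The main technical step is this interchange, but the pointwise bound from part (1) makes it routine; the core content of the theorem is really the geometric decay $\rho^k$ obtained by iterating Lemma \ref{lem:alpha.rho^n}, which is what makes the Neumann-type series converge.
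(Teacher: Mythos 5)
Your proposal is correct and takes essentially the same approach as the paper's own (omitted) proof, which the paper refers to Theorem 3.1 of \cite{Go17v1}: iterate Lemma \ref{lem:alpha.rho^n} to get $h^{(\star k)}\in{\rm Lip}_{\rho^k,0}$, bound $|h^{(\star k)}(0)|$ by $\rho^k\mu$ via Lemma \ref{lem:suph0} to obtain \eqref{eq:hstar.sup.bound}, sum the geometric series of Lipschitz constants, and verify \eqref{steineq} by interchanging $A_{x+1}$ with the sum, justified by the uniform bound on $[0,x+1]$.
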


\noindent {\em Proof of Theorem \ref{bound}:} The proof follows by arguing as in the proof of Theorem 1.3 of \cite{Go17v1}, with the final claim obtained by applying Theorem \ref{concaverho} to $s(x)=x$; we omit the details.

In the remainder of this section we specialize to the case of the generalized Dickman distribution where for some $\theta>0$ we have $t(x)=x^\theta$, 
$d\nu/dv = \theta v^{\theta-1}$ and
the Stein equation \eqref{eq:f.stein} becomes
\bea \label{eq:f.stein.spec}
(x/\theta) f'(x)+f(x)-f(x+1)=h(x)-E[h(D_{\theta})].
\ena 
Note that the function $s(x)=x$ trivially satisfies Condition \ref{cond:s}. For notational simplicity, in what follows, let $\rho_i=\theta/(\theta+i)$ for $i \in \{1,2\}$.

\begin{lemma}\label{bddsecondderivative}
For non-negative $\alpha$ and $\beta$, let $\mathcal{H}_{\alpha,\beta}$ be as in \eqref{eq:def.calHab}. For every $\theta > 0$, if 
$h \in \mathcal{H}_{\alpha,\beta}$ then $A_x h \in C^2[(0,\infty)]$ and both $A_x h$ and $A_{x+1} h$ are elements of $\mathcal{H}_{\alpha \rho_1,\beta \rho_2}$.
\end{lemma}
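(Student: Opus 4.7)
The plan is to exploit a clean recursive identity: for $t(x)=x^\theta$, the derivative $(A_x h)'$ is a constant multiple of an averaging operator of the same type but with parameter $\theta+1$ applied to $h'$. Applying Lemma \ref{lem:t.rho.integral.bound} twice—once for $A_x h$ itself and once for its derivative—will deliver both Lipschitz bounds, and $C^2$ regularity will drop out as a byproduct.

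First, I would establish the constant $\rho_1$. With $t(x)=x^\theta$, the integral in \eqref{eq:18.for.t} evaluates explicitly to $I(x)=\tfrac{\theta x^{\theta-1}}{x^{2\theta}}\cdot\tfrac{x^{\theta+1}}{\theta+1}=\tfrac{\theta}{\theta+1}=\rho_1$, so Lemma \ref{lem:t.rho.integral.bound} immediately yields $A_x h\in\mathrm{Lip}_{\alpha\rho_1}$ whenever $h\in\mathrm{Lip}_\alpha$. Next, from \eqref{eq:Axh.prime.general} we have
\begin{equation*}
(A_x h)'=\frac{\theta}{x^{\theta+1}}\int_0^x h'(u)u^\theta\,du=\frac{\theta}{\theta+1}\,\widetilde A_x h',
\end{equation*}
where $\widetilde A_x g=\tfrac{1}{x^{\theta+1}}\int_0^x g(u)(\theta+1)u^\theta\,du$ is the averaging operator corresponding to the weight $\widetilde t(x)=x^{\theta+1}$. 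The analogous constant for $\widetilde t$ is $\widetilde I(x)\equiv (\theta+1)/(\theta+2)$, so applying Lemma \ref{lem:t.rho.integral.bound} to $\widetilde A_x$ with input $h'\in\mathrm{Lip}_\beta$ gives $\widetilde A_x h'\in\mathrm{Lip}_{\beta(\theta+1)/(\theta+2)}$. Multiplying by the scalar $\theta/(\theta+1)$ yields $(A_xh)'\in\mathrm{Lip}_{\beta\rho_2}$, since $\tfrac{\theta}{\theta+1}\cdot\tfrac{\theta+1}{\theta+2}=\tfrac{\theta}{\theta+2}=\rho_2$. Together these two bounds give $A_x h\in\mathcal{H}_{\alpha\rho_1,\beta\rho_2}$.

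For the $C^2[(0,\infty)]$ claim, I would note that $h'\in\mathrm{Lip}_\beta$ implies $h\in C^1$. Since the integrand $h(v)\theta v^{\theta-1}$ is continuous on $(0,\infty)$ and $x^{-\theta}$ is smooth there, $A_x h$ is $C^1$. Using the representation $(A_x h)'=\tfrac{\theta}{x^{\theta+1}}\int_0^x h'(u)u^\theta\,du$ and continuity of $h'$, the fundamental theorem of calculus and the smoothness of $x^{-(\theta+1)}$ on $(0,\infty)$ show that $(A_xh)'$ is itself $C^1$, so $(A_xh)''$ exists and is continuous.

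Finally, for the statement about $A_{x+1}h$: letting $G(x)=A_xh$ we have $A_{x+1}h=G(x+1)$, and Lipschitz constants of a function and its derivative are invariant under translation. Since $G\in\mathcal H_{\alpha\rho_1,\beta\rho_2}$ on $(0,\infty)$ and we only evaluate at $x+1\ge 1$, it follows at once that $A_{x+1}h\in\mathcal H_{\alpha\rho_1,\beta\rho_2}$. There is no real obstacle; the only mildly delicate point is recognizing the self-similar structure of \eqref{eq:Axh.prime.general} that lets one reapply Lemma \ref{lem:t.rho.integral.bound} with exponent $\theta+1$ to obtain the sharper constant $\rho_2$ rather than $\rho_1^2$.
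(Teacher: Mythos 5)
Your proposal is correct, and it reaches the constant $\beta\rho_2$ by a route that differs from the paper's. The paper computes $(A_xh)''$ explicitly, rewrites it using $x^{\theta}=\frac{\theta+1}{x}\int_0^x v^\theta dv$ as
$\frac{\theta(\theta+1)}{x^{\theta+2}}\int_0^x\bigl(h'(x)-h'(v)\bigr)v^\theta dv$,
and bounds the increments of $h'$ by $\beta(x-v)$, which simultaneously delivers $|(A_xh)''|\le\beta\rho_2$ and (since the formula is visibly continuous) the $C^2$ claim. You instead exploit the self-similar structure of \eqref{eq:Axh.prime.general}: $(A_xh)'=\frac{\theta}{\theta+1}\widetilde A_xh'$ with $\widetilde A$ the averaging operator for $\widetilde t(x)=x^{\theta+1}$, and you reapply Lemma \ref{lem:t.rho.integral.bound} (whose hypotheses hold since $h'\in{\rm Lip}_\beta$ is locally absolutely continuous and $\widetilde I\equiv(\theta+1)/(\theta+2)$), picking up $\rho_2=\frac{\theta}{\theta+1}\cdot\frac{\theta+1}{\theta+2}$. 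This is more modular — it reuses the general contraction lemma at exponent $\theta+1$ rather than redoing the estimate, and it makes transparent why the constant improves from $\rho_1$ to $\rho_2$ — but it implicitly bounds the second derivative through the a.e.\ derivative $h''$ of the Lipschitz function $h'$, whereas the paper's difference-quotient bound never invokes $h''$; also, you still need a separate differentiation argument for the $C^2$ statement (and the identification of the a.e.\ identity \eqref{eq:Axh.prime.general} with the everywhere-defined derivative via continuity), which the paper gets for free from its single explicit computation. Both treat the endpoint $x=0$ and the translation argument for $A_{x+1}h$ at the same (light) level of rigor as the paper, so there is no gap.
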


\begin{proof} Take $h \in {\cal H}_{\alpha,\beta}$. Since $h \in {\rm Lip}_\alpha$, by Lemmas \ref{lem:alpha.rho^n} and  \ref{lem:t.rho.integral.bound}, $h(\cdot)$ is $\nu$-integrable on any interval of the form $[0,a]$ for all $a > 0$, $A_xh \in {\rm Lip}_{\alpha \rho_1}$ and
\beas
(A_x h)'=\frac{\theta}{x^{\theta+1}} \int_0^x h'(v) v^\theta dv \qmq{for $x>0$.}
\enas
Taking another derivative we obtain
\beas
(A_xh)'' = \frac{\theta}{x^{\theta + 1}} \left[ h'(x)x^{\theta}  - \frac{\theta+1}{x} \int_0^x  h'(v) v^\theta dv \right] \qmq{for $x>0$.}
\enas
As $h' \in {\rm Lip}_\beta$, the function $A_xh$ is twice continuously differentiable on $(0,\infty)$ proving the first claim.
Since
\beas
x^{\theta}=\frac{\theta + 1}{x}\int_0^x v^\theta dv
\enas
we have
\beas
(A_xh)'' = \frac{\theta (\theta + 1)}{x^{\theta + 2}} \left[\int_0^x  (h'(x)  - h'(v)) v^\theta dv \right].
\enas
Taking absolute value and using that $h' \in {\rm Lip}_\beta$ now yields
\begin{multline*}
|(A_xh)''| \le \frac{\theta (\theta + 1)}{x^{\theta + 2}} \left[\int_0^x |h'(x)  - h'(v)|v^\theta  dv \right] \\ \le  \frac{\beta \theta (\theta + 1)}{x^{\theta + 2}} \left[\int_0^x (x-v) v^\theta  dv \right]
= \frac{\beta  \theta (\theta + 1)}{x^{\theta + 2}} \frac{x^{\theta + 2}}{(\theta+1)(\theta + 2)}= \frac{\beta\theta}{\theta +2}=\beta \rho_2. 
\end{multline*}
Since both $A_x h$ and $(A_xh)'$ are continuous at $0$ and belong in $C^1[(0,\infty)]$, we obtain $A_x h \in \mathcal{H}_{\alpha \rho_1,\beta \rho_2}$. The final claim is a consequence of the fact that $A_{x+1}h$ is a left shift of $A_x h$.
\end{proof}

\begin{theorem}\label{fsolutionbds}
For every $\theta > 0$ and $h \in \mathcal{H}_{1,1}$, there exists a solution $f  \in \mathcal{H}_{\theta, \theta/2}$ to \eqref{eq:f.stein.spec} with $\|f'\|_{(0,\infty)} \le \theta$ and $\|f''\|_{(0,\infty)} \le \theta/2$.
\end{theorem}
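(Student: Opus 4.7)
The plan is to build $f$ by composing two layers of the averaging operator $A_x$ applied to $h$, using the contraction properties established in Lemma \ref{bddsecondderivative} together with the series construction from Theorem \ref{Lipschitzg}. Without loss of generality we subtract the constant $E[h(D_\theta)]$ and work instead with $\widetilde h=h-E[h(D_\theta)]\in{\rm Lip}_{1,0}$; note that $\widetilde h\in\mathcal{H}_{1,1}$ since adding a constant does not affect either Lipschitz property. We specialize to $s(x)=x$, $t(x)=x^\theta$, so that $\rho_1=\theta/(\theta+1)$ and $\rho_2=\theta/(\theta+2)$.

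First I would iterate Lemma \ref{bddsecondderivative}: since $\widetilde h\in\mathcal{H}_{1,1}$, an easy induction gives
\[
\widetilde h^{(\star k)}=A_{x+1}^k \widetilde h\in\mathcal{H}_{\rho_1^k,\rho_2^k}\qquad\text{for every }k\ge 0.
\]
Because membership in ${\rm Lip}_\alpha$ for an a.e.\ differentiable function yields $\|(\widetilde h^{(\star k)})'\|_\infty\le\rho_1^k$ and similarly $\|(\widetilde h^{(\star k)})''\|_\infty\le\rho_2^k$ on $(0,\infty)$, the Weierstrass $M$-test permits termwise differentiation of the series
\[
g(x)=\sum_{k\ge 0}\widetilde h^{(\star k)}(x),
\]
which Theorem \ref{Lipschitzg} already guarantees converges and solves $g(x)-A_{x+1}g=\widetilde h(x)$. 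Summing the geometric series for the first and second derivatives gives
\[
\|g'\|_\infty\le\frac{1}{1-\rho_1}=\theta+1,\qquad\|g''\|_\infty\le\frac{1}{1-\rho_2}=\frac{\theta+2}{2},
\]
so that $g\in\mathcal{H}_{\theta+1,(\theta+2)/2}$.

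Next, define $f(x)=A_x g$. Applying Lemma \ref{bddsecondderivative} once more, this time to $g$ in place of $h$, yields $f\in\mathcal{H}_{(\theta+1)\rho_1,\,((\theta+2)/2)\rho_2}=\mathcal{H}_{\theta,\theta/2}$, with the sharp bounds $\|f'\|_{(0,\infty)}\le\theta$ and $\|f''\|_{(0,\infty)}\le\theta/2$ obtained directly from the explicit formulas $(A_xg)'=(\theta/x^{\theta+1})\int_0^x g'(v)v^\theta dv$ and the computation in the proof of Lemma \ref{bddsecondderivative}. It remains to verify that $f$ solves \eqref{eq:f.stein.spec}: by \eqref{eq:stein.Ax+1.form}, with $t(x)=x^\theta$ so $t(x)/t'(x)=x/\theta$, we have $\mathbb{D}_t f(x)=g(x)-A_{x+1}g=\widetilde h(x)=h(x)-E[h(D_\theta)]$ almost everywhere, which is exactly \eqref{eq:f.stein.spec}; since $f\in C^2((0,\infty))$ by Lemma \ref{bddsecondderivative} and both sides are continuous, the identity holds everywhere on $(0,\infty)$.

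The main subtlety, and the only step requiring real care, is justifying the termwise differentiation of the defining series for $g$; once one passes from Lipschitz bounds on $\widetilde h^{(\star k)}$ to sup-norm bounds on $(\widetilde h^{(\star k)})'$ and $(\widetilde h^{(\star k)})''$ (using that these functions are $C^2$ on $(0,\infty)$ by Lemma \ref{bddsecondderivative}), the geometric decay rates $\rho_1^k$ and $\rho_2^k$ deliver absolute and uniform convergence on $(0,\infty)$ and everything else is mechanical.
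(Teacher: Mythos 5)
Your proof is correct and rests on the same two ingredients as the paper's -- the series solution $g=\sum_{k\ge0}\widetilde h^{(\star k)}$ furnished by Theorem \ref{Lipschitzg} and the contraction Lemma \ref{bddsecondderivative} -- but you apply them in the opposite order, and the reorganization is a genuine, if modest, streamlining. The paper sets $f=A_xg$, proves $f=\sum_{n\ge0}A_x\widetilde h^{(\star n)}$ by a separate uniform-convergence estimate (via \eqref{eq:Adot.bound} and \eqref{eq:hstar.sup.bound}), and then differentiates that series termwise, summing $\sum_{n\ge 0}\rho_i^{\,n+1}=\rho_i/(1-\rho_i)$. You instead differentiate the $g$-series termwise to conclude $g\in\mathcal{H}_{1/(1-\rho_1),\,1/(1-\rho_2)}=\mathcal{H}_{\theta+1,(\theta+2)/2}$ and then apply Lemma \ref{bddsecondderivative} a single time to $g$, using $(\theta+1)\rho_1=\theta$ and $\frac{\theta+2}{2}\rho_2=\frac{\theta}{2}$; this yields the $C^2$ regularity of $f$ on $(0,\infty)$ and the membership $f\in\mathcal{H}_{\theta,\theta/2}$ directly from the lemma, and it dispenses entirely with the interchange of $A_x$ with the infinite sum, so \eqref{eq:hstar.sup.bound} is never needed. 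One point of phrasing should be repaired: the $k=0$ term of your series is only $\widetilde h\in\mathcal{H}_{1,1}$, so $(\widetilde h^{(\star 0)})''$ exists merely almost everywhere and you cannot literally termwise differentiate the series twice; this is immaterial, however, because to get $g'\in{\rm Lip}_{(\theta+2)/2}$ it suffices that $g'=\sum_{k\ge 0}(\widetilde h^{(\star k)})'$ (one application of the termwise-differentiation theorem, Theorem 7.17 of \cite{Ru64}, each summand being $C^1$) together with $(\widetilde h^{(\star k)})'\in{\rm Lip}_{\rho_2^k}$ and summability of the Lipschitz constants; an a.e.\ bound on $g''$ alone would not give $g'$ Lipschitz. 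With that rewording, and the routine observation that the equation also holds at $x=0$ because $f(0)-f(1)=g(0)-A_1g=\widetilde h(0)$ (the same level of care as the paper's appeal to the equivalence of \eqref{eq:f.stein} and \eqref{steineq}), your argument is complete and delivers exactly the claimed bounds.
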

\begin{proof}
Take $h \in {\cal H}_{1,1}$. By replacing $h(\cdot)$ by $h-E[h(D_\theta)]$ we may assume $E[h(D_\theta)]=0.$ Clearly $s(x)=x$ satisfies Condition \ref{cond:s} and $E[D_\theta]= \theta$ (see e.g. \cite{Dev}). Also, by Theorem \ref{bound}, $\rho=\rho_1$ satisfies \eqref{eq:18.for.t}. For $h \in {\rm Lip}_{1,0}$, Theorem \ref{Lipschitzg} shows that $g(\cdot)$ given by \eqref{def:hstar.g} is a
${\rm Lip}_{1/(1-\rho_1)}$ solution  to \eqref{steineq}. Since $g(\cdot)$ is Lipschitz, we have $g \in \bigcap_{a>0}L^1([0,a],\nu)$ and hence $f(x)=A_x g$ is a solution to \eqref{eq:f.stein.spec} by the equivalence of \eqref{eq:f.stein} and \eqref{steineq}. Now for $a>0$, for any function $h \in L^1([0,a],\nu)$, 
\bea \label{eq:Adot.bound}
\|A_{\sbull} h\|_{[0,a]} = \sup_{x \in [0,a]} |A_{x} h| \le \sup_{x \in [0,a]}\frac{1}{x^\theta} \int_0^{x} |h(v)| \theta v^{\theta-1} dv \le \|h\|_{[0,a]}.
\ena
Let
\beas 
g_n(x)=\sum_{k=0}^n h^{(\star k)}(x) \qmq{and} f_n(x)=A_x g_n.
\enas
Since $g_n \in {\rm Lip}_{{(1-\rho^{n+1})/(1-\rho)}}$ by Theorem \ref{Lipschitzg}, it is $\nu$-integrable over $[0,a]$. Now using \eqref{eq:Adot.bound}, the triangle inequality and  \eqref{eq:hstar.sup.bound} of Theorem \ref{Lipschitzg}, noting $E[D_\theta]=\theta$, we have
\begin{multline*}
\|f - f_n \|_{[0,a]}= \|A_{\sbull} g - A_{\sbull}g_n \|_{[0,a]} \le \| g - g_n \|_{[0,a]} \\ \le \sup_{x \in [0,a]}  \sum_{k \ge n+1} \| h^{(\star k)}\|_{[0,a]} \le (\theta+a)\sum_{k \ge n+1} \rho_1^k = (\theta+a) \frac{\rho_1^{n+1}}{1-\rho_1}.
\end{multline*}
Letting $n \to \infty$, we obtain
\beas
f(x)=\sum_{n \ge 0}A_x h^{(\star n)}.
\enas
Lemma \ref{bddsecondderivative} and induction imply that $A_xh^{(\star n)} \in C^2[(0,\infty)]$ and
\beas
A_xh^{(\star n)} \in \mathcal{H}_{\rho_1^{n+1},\rho_2^{n+1}}
\qmq{for all $n \ge 0$,}
\enas
and hence
\bea\label{derbd}
\|(A_xh^{(\star n)})'\|_{(0,\infty)} \le \rho_1^{n+1} \qmq{and} \|(A_xh^{(\star n)})''\|_{(0,\infty)} \le \rho_2^{n+1}.
\ena
Thus, for any $a>0$, on the interval $(0,a]$, $f'_n(x) = \sum_{k = 0}^n (A_{x} h^{(\star k)})'$ and $f''_n(x)=\sum_{k = 0}^n (A_{x} h^{(\star k)})''$ converge uniformly to the corresponding infinite sums respectively, noting that by \eqref{derbd}, the infinite sums are absolutely summable. Thus we obtain (see e.g. Theorem 7.17 in \cite{Ru64})
\beas
f'(x)=\lim_{n \to \infty}f_n'(x) \qmq{and} f''(x)=\lim_{n \to \infty}f_n''(x) \qm{for all $x \in [0,a]$.}
\enas
Hence, again using \eqref{derbd}, with $\|\cdot\|_{(0,\infty)}$ the supremum norm defined as in \eqref{supnorm},
\beas
\|f'\|_{(0,\infty)} \le \sum_{n \ge 0} \rho_1^{n+1}=\frac{\rho_1}{1-\rho_1}=\theta \qmq{and} \|f''\|_{(0,\infty)} \le \sum_{n \ge 0} \rho_2^{n+1}=\frac{\rho_2}{1-\rho_2}=\frac{\theta}{2}.
\enas
Finally, since $f(\cdot)$ and $f'(\cdot)$ are differentiable everywhere on $(0,\infty)$ with bounded derivative, they are absolutely continuous on $(0,\infty)$. Also both $f(\cdot)$ and $f'(\cdot)$ are continuous at $0$ since by definition, $f(0)=A_0 g=g(0)=\lim_{x \downarrow 0} f(x)$ and $f'(0)=\lim_{x \downarrow 0} f'(x)$. Now noting that if a function is absolutely continuous on $(0,\infty)$ with bounded derivative and continuous at $0$, then it is Lipschitz, we obtain that $f \in {\cal H}_{\theta,\theta/2}$.
\end{proof}

\begin{remark}
The reasoning in the proof of Theorem \ref{fsolutionbds} holds in greater generality in $t(\cdot)$, and only specifically depends on the form $t(x)=x^\theta$ when invoking Lemma \ref{bddsecondderivative}.
\end{remark}

\begin{remark}
In contrast to the bound $\|f''\| \le 2 \|h'\|$ (see e.g. (2.12) of \cite{CGS}) for the solution of Stein equation in the normal case, one cannot uniformly bound the second derivatives of the solutions $f(\cdot)$ of \eqref{eq:f.stein.spec} in Theorem \ref{fsolutionbds} assuming  only a Lipschitz condition on the test functions $h(\cdot)$ in a class ${\cal H}$. For $b>0$ let
	\beas
	h(x) = \left\{
	\begin{array}{cc}
		0 & x \le b \\
		x-b & x >b.
	\end{array}
	\right.
	\enas
Clearly $h \in {\rm Lip}_1$. Taking $\theta=1$ and $s(x)=x$, the function $g(\cdot)$ as in \eqref{def:hstar.g}, with $h(\cdot)$ replaced by ${\bar h}(\cdot)=h(\cdot)-E[h(D)]$ is Lipschitz and solves \eqref{steineq} by Theorem \ref{Lipschitzg}, hence $f(x)=A_xg$ solves \eqref{eq:f.stein.spec}. Arguing as in the proof of Theorem \ref{fsolutionbds} to interchange $A_x$ and the infinite sum, $f(\cdot)$ is given by 
\bea \label{eq:f.counter}
f(x)= \sum_{k \ge 0} A_x[A_{\sbull +1}^k (\bar h)].
\ena

Consider the term $k=0$ in the sum \eqref{eq:f.counter}. Directly, one may verify that	
	\bea\label{firstder}
	A_x h = \left\{
	\begin{array}{cc}
		0 & x \le b \\
		\frac{(x-b)^2}{2x} & x >b.
	\end{array}
	\right.
	\quad 
	(A_x h)' = \left\{
	\begin{array}{cc}
		0 & x \le b \\
		\frac{1}{2}\left(1-(b/x)^2 \right) & x >b.
	\end{array}
	\right.
	\ena
	and 
	\bea\label{secondder}
(A_x h)'' = \left\{
	\begin{array}{cc}
		0 & x \le b \\
		\frac{b^2}{x^3} & x >b.
	\end{array}
	\right.
	\ena
so in particular, 
\bea \label{eq:b.small.f.double.big}
\lim_{x \downarrow b}(A_x \bar h)''=\lim_{x \downarrow b}(A_x h-Eh(D))''=\lim_{x \downarrow b}(A_x h)''=1/b, 
\ena
which is not bounded as $b \downarrow 0$.

From \eqref{firstder} and \eqref{secondder} respectively, we have that $(A_{x+1}\bar h)' \le 1/2$ and $(A_{x+1}\bar h)'' \le b^2/(x+1)^2 \le b^2$ on $(0, \infty)$, and hence $A_{x+1}\bar h \in {\cal H}_{\alpha, \beta}$ with $\alpha=1/2$ and $\beta=b^2$, By Lemma \ref{bddsecondderivative} with $\rho_1=1/2$ and $\rho_2=1/3$, we have $A_{\sbull +1}^k (\bar h) \in {\cal H}_{\alpha/2^{k-1},\beta/ 3^{k-1}}$ for $k \ge 1$. Hence, again by Lemma \ref{bddsecondderivative},
\bea
 A_x[A_{\sbull +1}^k (\bar h)] \in {\cal H}_{\alpha/ 2^k,\beta/3^k} \qm{on $(0,\infty)$ for $k \ge 1$}.
 \ena
 Summing and substituting the vales of $\alpha$ and $\beta$, we obtain
 \bea \label{f:assum}
 \sum_{k \ge 1} A_x[A_{\sbull +1}^k (\bar h)] \in  {\cal H}_{1/2,b^2/2}.
 \ena
From \eqref{eq:f.counter}, \eqref{eq:b.small.f.double.big} and \eqref{f:assum}, we find that $f''(x)$ may be made arbitrarily large on a set of positive measure by choosing $b>0$ sufficiently small.
\end{remark}
\begin{remark}
Shortly after a draft of this manuscript was posted, as a special case of their work on infinitely divisible laws,
Arras and Houdr{\'e} proved smoothness bounds in \cite{AH17} for a solution to the standard Dickman Stein equation of the form
\bea\label{SteinID}
xt(x) - \int_0^1 t(x+u)du = h(x) - E h(D);
\ena 
this equation corresponds to \eqref{eq:f.stein.spec} upon identifying $t(\cdot)$ and $f'(\cdot)$. Lemma 5.2 in \cite{AH17} shows that when $h(\cdot)$ is in the class ${\cal H}=\{h:\|h\|_\infty \le 1, \|h'\|_\infty \le 1, h'(\cdot) \text{ is continuous}\}$ then there exists a solution $t(\cdot)$ to \eqref{SteinID} with $\|t'\|_\infty \le 1$. The proof of Theorem \ref{thm:numgen} requires a uniform bound on $f'(\cdot)$ over $(0,\infty)$ to control the coefficient of $|\mu-1|$ in \eqref{eq:inf.TU}. As no such bound is provided in \cite{AH17}, in the case $\mu=1$ one can argue as for Theorem \ref{thm:numgen} to produce a version of it for the metric induced by ${\cal H}$. As neither class ${\cal H}$ nor ${\cal H}_{1,1}$ in \eqref{eq:def.calHab} contains the other, the first class requiring the test functions to be uniformly bounded, and the second requiring their derivatives to be Lipschitz, the resulting metrics they induce are incomparable.	
\end{remark}

\Addressc
\Addressl

\end{document}